\begin{document}

\begin{frontmatter}
\title{Consistency of spectral seriation}
\runtitle{Consistency of spectral seriation}

\begin{aug}
\author[A]{\fnms{Amine} \snm{Natik}\ead[label=e1]{amine.natik@umontreal.ca}},
\author[B]{\fnms{Aaron} \snm{Smith}\ead[label=e2]{asmi28@uottawa.ca}}
\address[A]{  Département de Mathématiques et Statistiques\\
  Université de Montréal\\
  Mila - Quebec Artificial Intelligence Institute\\
  6666 St Urbain St, Montreal, Canada.\\ \printead{e1}}

\address[B]{Department of Mathematics and Statistics\\
  University of Ottawa\\
  585 King Edward Ave, Ottawa, Canada. \\
 \printead{e2}}
\end{aug}

\begin{abstract}
Consider a random graph $G$ of size $N$ constructed according to a \textit{graphon} $w \, : \, [0,1]^{2} \mapsto [0,1]$ as follows. First embed $N$ vertices $V = \{v_1, v_2, \ldots, v_N\}$ into the interval $[0,1]$, then for each $i < j$ add an edge between $v_{i}, v_{j}$ with probability $w(v_{i}, v_{j})$. Given only the adjacency matrix of the graph, we might expect to be able to approximately reconstruct the permutation $\sigma$ for which $v_{\sigma(1)} < \ldots < v_{\sigma(N)}$ if $w$ satisfies the following \textit{linear embedding} property introduced in \cite{janssen2019reconstruction}: for each $x$, $w(x,y)$ decreases as $y$ moves away from $x$.  For a large and non-parametric family of graphons, we show that (i) the popular spectral seriation algorithm \cite{atkins1998spectral} provides a consistent estimator $\hat{\sigma}$ of $\sigma$, and (ii) a small amount of post-processing results in an estimate $\tilde{\sigma}$ that converges to $\sigma$ at a nearly-optimal rate, both as $N \rightarrow \infty$. 
\end{abstract}

\begin{keyword}[class=MSC]
\kwd[Primary ]{62M30}
\kwd[; secondary ]{62G20}
\end{keyword}

\begin{keyword}
\kwd{Statistical seriation}
\kwd{Permutation learning}
\kwd{Spectral embedding}
\kwd{Graphon}
\end{keyword}

\end{frontmatter}

\section{Introduction}

In this paper, we study a statistical version of the \textit{seriation problem}, first introduced by Robinson in \cite{robinson1951method} as a combinatorial problem. In this problem, we get to observe a graph, and our goal is to order the vertices in such a way that more similar items are placed near to each other. The pioneer of seriation is considered to be Flinders Petrie, an Egyptologist,  who called it \textit{sequence dating} in \cite{petrie1899sequences}.  While the seriation originates from archaeology, it has recent applications to ecology, sociology, biology, and other topics. See \cite{liiv2010seriation} for a historical overview on the seriation problem.

We relate the seriation problem from  \cite{petrie1899sequences} to the notation used in this article. We begin with a collection of $N \in \mathbb{N}$ objects, with the $i$'th object associated with an unobserved point $v_{i} \in [0,1]$. In \cite{petrie1899sequences}, these objects corresponds to a set of graves, with $v_{i}$ the age of the $i$'th grave. We then get to observe a symmetric graph $G = (V, E)$ with vertex set $V = \{v_1, v_2, \ldots, v_N\}$. Informally, we expect that an edge is more likely to appear between vertices $v_i$ and $v_j$ that are closer together. Continuing the example of \cite{petrie1899sequences}, the observed graph is constructed by putting an edge between $v_i$ and $v_j$ if we find the same type of artifacts in both the $i$'th and $j$'th graves. This co-occurrence is more likely when the two graves have similar ages. The \textit{goal} in seriation is then to find an ordering $\sigma \, : \, V \mapsto \{1,2,\ldots,N\}$ for which $v_{\sigma(1)} < v_{\sigma(2)} < \ldots < v_{\sigma(N)}$. In \cite{petrie1899sequences}, this means inferring a chronological ordering of the true ages of the set of graves by observing the different types of archaeological finds on each grave.

Many versions of the seriation problem have been studied. Perhaps the simplest occurs when the observed graph satisfies the following property: for all $v_{i} < v_{j} < v_{k}$, 
\begin{equation} \label{Eq:SeriationProperty}
     (v_i,v_k) \in E  \implies  (v_i,v_j) \in E  \ \text{and} \ (v_j,v_k) \in E.
\end{equation}
Graphs satisfying this property are called \textit{Robinsonian}, named after Robinson who first introduced this property in \cite{robinson1951method}. We call this the \textit{noiseless} seriation problem, as there will never be any ``missing" edges and a perfect ordering exists.

\begin{remark} [Reversals and Recovery]

Since we can only observe the graph $G$, there is no way to distinguish between an ordering $\sigma$ and its reversal $\mathrm{rev}(\sigma)$, defined by
\begin{equation} \label{eq:ReversalOrdering}
    \mathrm{rev}(\sigma)(i) = N+1 -\sigma(i).
\end{equation}
This means that, although we say that we wish to find an ordering $\hat{\sigma}$ that is close to the ordering $\sigma$ that satisfies $v_{\sigma(1)} < v_{\sigma(2)} < \ldots < v_{\sigma(N)}$, the best that we can hope for is an ordering $\hat{\sigma}$ that is close to \textit{either} $\sigma$ \textit{or} $\mathrm{rev}(\sigma)$. We expect that this non-identifiability is rarely a problem in practical applications: even a small amount of additional information lets you distinguish between the extremely different estimates $\hat{\sigma}$ and $\mathrm{rev}(\hat{\sigma})$.

To avoid repeatedly discussing this small non-identifiability issue, we usually hide it by the following small abuse of notation. For any metric $d$ that can be applied to orders, we always intend the symmetrized version
\begin{equation} \label{EqSymmMetric}
d(\sigma_1,\sigma_2) = \min(d(\sigma_1,\sigma_2), d(\mathrm{rev}(\sigma_1), \sigma_2))
\end{equation}

unless we explicitly say otherwise. We only need to pay attention to this issue in step 3 of our post-processing algorithm (Algorithm \ref{AlgPostProc}), where we consider several different estimated orderings and must ``align" them.

\end{remark}

This noiseless version of the seriation problem has been well-studied in the computer science literature, and there are several algorithms that are known to return a correct $\sigma$ very quickly; we believe that \cite{corneil2004simple} is the fastest. Of the provably-correct algorithms, one of the simplest and best-known is the \textit{spectral seriation algorithm} (see Algorithm \ref{Alg:SpectralSeriation} below) introduced in \cite{atkins1998spectral}. This algorithm is based on the spectrum of the \textit{Laplacian} of the observed graph.

In realistic examples, the data is noisy and \eqref{Eq:SeriationProperty} will not hold exactly. This motivates the \textit{statistical seriation problem}, in which the edges of the graph $G$ are seen as random and \eqref{Eq:SeriationProperty} only holds ``on average.'' We study the following natural generalization introduced in \cite{chuangpishit2015linear}: for all $v_i < v_j < v_k$,
\begin{equation} \label{Eq:SeriationPropertyStat}
\Prob{(v_i,v_k) \in E} \leq \min\left(\Prob{(v_i,v_j) \in E}, \, \Prob{(v_j,v_k) \in E}\right).
\end{equation}

In this situation we typically can't recover $\sigma$ exactly, and so the goal becomes \textit{estimating} $\sigma$.  Algorithm \ref{Alg:SpectralSeriation}, the spectral seriation algorithm, is commonly applied to the statistical seriation problem as well as the noiseless seriation problem.

We give the usual spectral algorithm below, but first set notation. For any integer $n\in \mathbb{N}$ define $[n] = \{1,2,\ldots,n\}$. For any set $S$ and any function $f \, : \, S \mapsto \mathbb{R}$, define the associated ordering $\sigma_{f} \, : \, S \mapsto \left[|S|\right]$ by the formula
\begin{equation}\label{Eq:DefFuncToPerm}
    \sigma_{f}(i) = | \{ j \in S \, : \, f(j) \leq f(i) \}|,
\end{equation}
breaking ties in $f$ arbitrarily to make the resulting function bijective. If $S = [N]$ and $f$ does not contain any repeated values, it is clear that $f(\sigma_{f}(1)) \leq f(\sigma_{f}(2)) \leq \cdots \leq f(\sigma_{f}(N))$.  

The spectral seriation algorithm is based on the eigenvalues of a related Laplacian matrix, rather than the adjacency matrix itself. We recall that the (unnormalized) Laplacian matrix of a graph with $N$ vertices and adjacency matrix $A = (a_{i,j})_{1\leq i,j \leq N}$ is given by $L_{A} = (D - A)$, where the \textit{degree matrix} $D = (d_{ij})_{1 \leq i,j\leq N}$ is the diagonal matrix with entries $d_{ii} = \sum_{j=1}^{N} a_{ij}$. The spectral seriation algorithm introduced in  \cite{atkins1998spectral} is as follows:

\begin{algorithm}[H] \label{Alg:SpectralSeriation}
    \SetKwInOut{Input}{Input}
    \SetKwInOut{Output}{Output}

    \Input{ A graph $G$.}
    Compute the adjacency matrix $A$ of the graph $G$.\\
    Compute the Laplacian matrix $L_A$.\\
    Find the eigenvector $\phi$ of $L$ with second-smallest eigenvalue, called the \textit{Fiedler vector.}\\
    Compute the permutation $\sigma_{\phi}$ according to Equation \eqref{Eq:DefFuncToPerm}.\\
    \Output{ The permutation $\sigma_{\phi}$.}
    \caption{The spectral seriation algorithm.}
\end{algorithm}

As shown in \cite{atkins1998spectral}, under the satisfying property \eqref{Eq:SeriationProperty} this algorithm always returns a correct permutation. The same algorithm also gives a popular estimator for the \textit{statistical} seriation problem. The main contribution of this paper is the study of the performance of this algorithm for the statistical seriation problem under fairly general conditions, which we now introduce.

A \textit{graphon} is a measurable function $w : [0,1]^2 \to [0,1]$ which is symmetric (i.e., $w(x,y) = w(y,x)$). See \cite{lovasz2012large} for a general introduction to graphons, and  \cite{chuangpishit2015linear} for previous work on graphons in the context of seriation. In this paper, we say that a random graph $G = (V, E)$ with $N$ vertices is sampled from a graphon $w$ and write $G \sim w$ if: (i) the vertices of the graph $V = \{v_1, v_2, \ldots, v_N\}$ are $v_i = \tfrac{i}{N}$ and (ii) the edges of $G$ are sampled independently with probabilities: \footnote{In the larger graphon literature, it is more common to view $v_{i}$ as being i.i.d. samples taken uniformly from $[0,1]$. This does not make a large difference in the types of estimates we obtain in this paper. We use this embedding to more closely match previous work on special cases of the seriation problem, such as \cite{7924316,RePEc:inm:oropre:v:68:y:2020:i:1:p:53-70,DingSmallWorld}. }
\begin{equation}\label{eq:samplingGraphs}
    \Prob{(v_i, v_j) \in E} = w \left(v_i, v_j\right).
\end{equation}

Our paper includes two main results on this problem. We now give weak but-easy-to-state versions of our main theorems, applying to the following family of random graphs:

\begin{defn} [``Nice" Random Graphs] \label{Assumption:weakfamily}
We say that a graphon $w$ is ``nice" if $w$ can be written as $w(x,y) = R(\abs{x-y})$ for all $x,y\in [0,1]$, where  $R:[0,1]\to(0,1]$ is a $\mathcal{C}^1$ function such that $R'(x)<0$ for all $x\in(0,1)$.
\end{defn}

To give a simple concrete family of examples: (i) the graphons $w(x,y) = 1 - \alpha |x-y|^{\beta}$ belong to this family when $0 < \alpha \leq 1$ and $1 \leq \beta < \infty$, and (ii) the graphons $w(x,y) = \exp{\left(\frac{-(x-y)^2}{2\sigma^2}\right)}$ for any $\sigma > 0$ (known as the RBF kernel) also belong to this family.

Denote by $id_N$ the identity permutation on $[N]$, and for all $\gamma > 1$ and $N \geq 1$ define $\varepsilon_{\gamma}(N) := \exp{\left( -\log^{\gamma}(N)\right)}$. The following is an immediate consequence of Theorems \ref{ThmMainThm} and \ref{ThmSomePostprocMain}, as we show in Section \ref{SecApplNice}:

\begin{thm} \label{ThmSimpleVersion}
Let $w$ be a graphon of the form given in Definition \ref{Assumption:weakfamily}. Consider a family of random graphs  $\{G^{(N)}\}_{N \in \mathbb{N}}$ where each $G^{(N)}$ has exactly $N$ vertices, and is sampled from $w$ in the sense of Equation~\eqref{eq:samplingGraphs} (i.e., $G^{(N)} \sim w$), and let $\hat{\sigma}^{(N)}$ be the output of Algorithm \ref{Alg:SpectralSeriation} with input $G^{(N)}$. For any fixed $\varepsilon \in (0, 1)$ and $\eta \in (0, 1/3)$, there exists a positive constant $C = C(w) < \infty$ and an integer $N_0 = N_0(w, \varepsilon, \eta)$ so that for all $N \geq N_0$
\begin{equation}  \label{IneqThm1Cons}
\Prob{ \norm{\hat{\sigma}^{(N)} - id_N}_1 \leq C N^{2 - \eta} } > 1 - \varepsilon.
\end{equation}
 Furthermore, denote by $\tilde{\sigma}^{(N)}$ the ``post-processed'' estimate obtained from Algorithm \ref{AlgPostProc} with input $G^{(N)}$ and parameters $\alpha = 0.05$, $\beta = 0.31$. Then for any $\gamma >1$ and $\epsilon > 0$, there exist a positive constant $\delta = \delta(w, \gamma, \epsilon)$  and an integer $N_0 = N_0(w, \gamma, \epsilon)$ so that 
\begin{equation} \label{IneqThm2Cons}
\Prob{\norm{\tilde{\sigma}^{(N)} - id_N}_{\infty} \leq  \epsilon \,\sqrt{N \log^{\gamma}(N)}} > 1 - \varepsilon^{\delta}_{\gamma}(N)
\end{equation}
for all $N \geq N_0$.
\end{thm}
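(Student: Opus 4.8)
The plan is to split the proof into a deterministic spectral-analytic statement about nice graphons---which is the part that actually has to be checked---and a probabilistic argument about how the random graph concentrates, which is uniform over the class (this is also the route the paper takes, through its general Theorems~\ref{ThmMainThm} and~\ref{ThmSomePostprocMain}). From Definition~\ref{Assumption:weakfamily} I need to extract three things: (i) the linear-embedding/Robinson structure of $w$, immediate since $R$ is decreasing; (ii) a quantitative spectral gap $g = g(w) > 0$ around the second eigenvalue of the population Laplacian; and (iii) a Fiedler eigenfunction $\psi$ that is $\mathcal{C}^1$, strictly monotone, and satisfies $\abs{\psi'} \ge c(w) > 0$ on all of $[0,1]$. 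Items (ii)--(iii) are the real content.

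For the population analysis I would use the self-adjoint operator $(\mathcal{L}f)(x) = d(x)f(x) - \int_0^1 w(x,y)f(y)\,dy$ with $d(x) = \int_0^1 w(x,y)\,dy$, which is the $N\to\infty$ limit of $\tfrac1N L_W$ for $W = (w(v_i,v_j))_{i,j}$. Its quadratic form $\langle f,\mathcal{L}f\rangle = \tfrac12\iint w(x,y)(f(x)-f(y))^2\,dx\,dy$ is nonnegative and vanishes only on constants (nice graphons have $w > 0$ everywhere), so $\mu_1 = 0$ is simple and $\mu_2 > 0$; and since $w$ is continuous the integral operator is compact, so by Weyl's theorem the essential spectrum of $\mathcal{L}$ is the range of $d$. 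Making $\mu_2$ an \emph{isolated, simple} eigenvalue therefore reduces to showing that it lies strictly below the rest of the spectrum, and the gap $g$ is then its distance to that spectrum; this, together with the fact that the Fiedler eigenfunction changes sign exactly once and has no interior critical point, I would get from oscillation/sign-regularity theory for Robinson kernels and the continuum form of the guarantee of \cite{atkins1998spectral}. The $\mathcal{C}^1$ regularity of $\psi$ then follows by bootstrapping the identity $\psi(x) = (d(x)-\mu_2)^{-1}\int_0^1 w(x,y)\psi(y)\,dy$, which is valid because $d(x) - \mu_2 \ge \min d - \mu_2 > 0$, after which a compactness argument upgrades strict monotonicity to $\abs{\psi'}\ge c(w)$ on the closed interval. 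I expect this last upgrade to be the main obstacle: the endpoints $0$ and $1$ are exactly where naive arguments lose control of $\psi'$, and near-constant profiles---for which $g$ degenerates---show that the argument must genuinely exploit $R' < 0$ rather than mere monotonicity of $R$.

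Granting (i)--(iii), inequality~\eqref{IneqThm1Cons} is a standard perturbation argument. Since nice graphons have $w \ge R(1) > 0$, concentration for inhomogeneous Erd\H{o}s--R\'enyi graphs gives $\norm{A - W}_{\mathrm{op}} = O(\sqrt N)$ and, by Bernstein and a union bound over the $N$ vertices, $\norm{D_A - D_W}_{\mathrm{op}} = O(\sqrt{N\log N})$, hence $\norm{L_A - L_W}_{\mathrm{op}} = O(\sqrt{N\log N})$ on an event of probability $1 - o(1)$ (in particular exceeding $1 - \varepsilon$ once $N$ is large). As $w \in \mathcal{C}^1$, $\tfrac1N L_W$ discretizes $\mathcal{L}$ with $O(1/N)$ error, so $L_W$ has a spectral gap of order $gN$ around its second eigenvalue and Davis--Kahan gives $\norm{\hat\phi - \phi}_2 = O(\sqrt{\log N / N})$ for the unit-norm Fiedler vectors, where $\phi_i \approx N^{-1/2}\psi(v_i)$. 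To convert this into an $\ell_1$ bound on $\hat\sigma = \sigma_{\hat\phi}$ I would truncate: writing $B_t = \{i : \abs{\hat\phi_i - \phi_i} > t\}$, one has $\abs{B_t} \le \norm{\hat\phi - \phi}_2^2 / t^2$, two indices outside $B_t$ that are more than $2tN^{3/2}/c$ apart are correctly ordered because $\abs{\phi_i - \phi_j} \ge c\abs{i-j}/N^{3/2}$, and the Diaconis--Graham inequality (Spearman footrule controlled by number of inversions) gives $\norm{\hat\sigma - id_N}_1 \lesssim N\abs{B_t} + tN^{5/2}$; optimizing at $t \asymp (\log N)^{1/3}N^{-5/6}$ yields $\norm{\hat\sigma - id_N}_1 = O(N^{5/3}(\log N)^{1/3})$, which is $\le C N^{2-\eta}$ for every $\eta < 1/3$ once $N$ is large, exactly the stated range.

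Finally,~\eqref{IneqThm2Cons} comes from post-processing. The $\ell_1$ bound just obtained makes $\hat\sigma$ consistent enough that, after one cleaning pass, Algorithm~\ref{AlgPostProc} can localize every vertex to within $o(N)$ of its true rank and then re-estimate its position by an outlier-robust local rule against reference sets of vertices that it orders with confidence---for instance via counts $\sum_{j\in S} a_{ij}$, whose means $\sum_{j\in S} R(\abs{i-j}/N)$ are strictly monotone in the position of $v_i$. Such a (local maximum-likelihood / weighted-moment) estimator operates at the Cram\'er--Rao scale $\Theta(\sqrt N)$, since the Fisher information for the location of $v_i$ given the independent $\mathrm{Bernoulli}(R(\abs{i-j}/N))$ variables is of order $1/N$ up to a logarithmic factor; a Bernstein large-deviation bound then shows that an error exceeding $m$ occurs with probability at most $\exp(-\Omega(m^2/N))$, so taking $m \asymp \sqrt{N\log^\gamma N}$ makes this at most $\exp(-\delta\log^\gamma N) = \varepsilon^{\delta}_{\gamma}(N)$ even after a union bound over all $N$ vertices, with $\delta = \delta(w,\gamma,\epsilon)$ small enough to absorb the $w$-dependent constant into the prefactor $\epsilon$. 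The reversal ambiguity is resolved in the alignment step (step~3 of Algorithm~\ref{AlgPostProc}), as anticipated in the remark on reversals and recovery. This yields~\eqref{IneqThm2Cons} and completes the proof.
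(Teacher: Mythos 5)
Your overall architecture (population spectral facts, then concentration plus Davis--Kahan plus an inversion count, then post-processing) mirrors the paper's, and your truncation argument converting the $L^2$ eigenvector error into an $\ell_1$ bound on $\hat\sigma$ is sound and would give the stated $N^{2-\eta}$, $\eta<1/3$. But the two places you yourself flag as "the real content" are exactly where the proposal has genuine gaps. For the population step, you assert that the isolated simple second eigenvalue and the strictly monotone $\mathcal{C}^1$ Fiedler eigenfunction with $|\psi'|\ge c(w)>0$ follow from "oscillation/sign-regularity theory for Robinson kernels and the continuum form of the guarantee of \cite{atkins1998spectral}." That is not a proof: sign-regularity/total-positivity theory pertains to the compact integral operator $\mathbb{W}$, not to the non-compact Laplacian $\mathcal{L}=d\cdot\mathrm{Id}-\mathbb{W}$, and the finite-matrix result of Atkins et al.\ does not pass to the continuum with a quantitative derivative bound. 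Moreover, your bootstrap identity $\psi(x)=(d(x)-\mu_2)^{-1}\int w(x,y)\psi(y)\,dy$ already presupposes $\mu_2<\inf_x d(x)$, i.e.\ that the Fiedler value sits strictly below the essential spectrum $rg(d)$. That inequality is precisely Assumption \ref{assumption:InfimumInequality}, it is nontrivial for the class of Definition \ref{Assumption:weakfamily} (the paper verifies it only via the dedicated argument of \cite{MOPinelis}), and monotonicity of the Fiedler function is the content of the paper's Section \ref{SecFiedlerProp} argument through the operator $\mathbb{M}$ of Equation \eqref{eq:operatorM} and the non-compact Krein--Rutman theorem of \cite{edmunds1972non}. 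None of that is supplied or replaced by your sketch.

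For Inequality \eqref{IneqThm2Cons} the gap is that you analyze a generic "local MLE / weighted-moment" refinement at the Cram\'er--Rao scale rather than the algorithm the statement actually names: Algorithm \ref{AlgPostProc} with $\alpha=0.05$, $\beta=0.31$. A proof must (i) verify Assumption \ref{AssumptionPsiOK} for these specific parameters under Definition \ref{Assumption:weakfamily} (the mean-distance inequality $\int_0^{0.05}R>\int_{0.26}^{0.31}R$ and the distinguishability constant $d_1$), and (ii) show that the reference sets $R,L$, which are produced by the imperfect first-stage ordering with $\ell_1$ error of order $N^{2-\eta}$, do not corrupt the neighbour-count comparisons; this contamination control is the heart of the paper's Proposition \ref{prop:accurate-order} (the split into $R_{\sigma_T}(\kappa_1)$ versus $R\setminus R_{\sigma_T}(\kappa_1)$ and Inequality \eqref{eq:inequality-sigmaHatT-sigmaT}), together with the quantile and four-branch comparison analysis of Lemmas \ref{lemma:CR-CL-bound}--\ref{lemma:almost-correct-ordering-function}, none of which your Fisher-information heuristic addresses. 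Relatedly, inside the post-processing you need the first-stage guarantee to hold with probability $1-\varepsilon_\gamma^{\delta}(N)$ (the w.e.p.\ version, Inequality \eqref{eq:approx2}), not merely $1-\varepsilon$ as your concentration step delivers; and the $\sqrt{N\log^\gamma N}$ tail must survive the sample-splitting/merging and the reversal-alignment step, which again requires the specific structure of Algorithm \ref{AlgPostProc} rather than a generic large-deviation bound per vertex.
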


We note that Inequality \eqref{IneqThm2Cons}, which is a weak form of Theorem \ref{ThmSomePostprocMain}, is much stronger than \eqref{IneqThm1Cons}, which is a weak form of Theorem \ref{ThmMainThm}, in at least two ways: the norm in which convergence is measured is much stronger, and the rate of convergence is much faster (even after rescaling to account for the different norms). We discuss the degree to which this is rate-optimal, and how to further improve this results under some additional assumptions, in Section \ref{SecOptDisc}.

\subsection{Previous Work and Main Contributions}

Informally, the main results of our paper are that (i) the popular spectral embedding algorithm gives reasonably good results for the seriation problem, and (ii) a simple post-processing step can be added to obtain very good results. We view this as important reassurance for practitioners: spectral embedding algorithms are very popular, but there was little theoretical support for using these algorithms to fit anything but a few small parametric families of models. Our results provide evidence for the folk belief that spectral algorithms, with appropriate coarse post-processing, are quite insensitive to the details of the generating process $w$.

There are many papers in the literature that have either similar-looking results or similar-looking methods, so we give a quick survey and then summarize the most interesting differences.

\subsubsection{Previous Work: Similar Algorithms and Proofs, Different Questions}

Spectral embedding algorithms are very popular in many contexts, especially in the \textit{spectral clustering} literature (see e.g. \cite{donath2003lower, von2007tutorial} and variants such as  \cite{krzakala2013spectral, chatterjee2015matrix,avella2018centrality}). The idea behind all these algorithms is to calculate the first few eigenvectors of a matrix related to your data matrix, use these to embed your data in a low-dimensional space, then use a popular clustering algorithm such as $k$-means to cluster points in this representation. As with the seriation problem, this is known to give exactly the desired answer in simple ``noiseless" situations. 

In ``noisy" situations, the most common analyses follow the approach of  \cite{von2008consistency}. In that paper, the authors show that, under appropriate conditions, the eigenvectors of the observed data matrix will converge to the eigenfunctions of a certain limiting operator. When this convergence happens, the associated spectral embedding will also converge to an embedding related to the limiting operator. One can then bound the difference between the clustering associated with the observed data and the limiting operator, obtaining a consistency result. 

The proof of our first main bound, Inequality \eqref{IneqThm1Cons}, follows essentially this approach. We emphasize two main differences:

\begin{enumerate}
    \item \textbf{Definition of ``correct" answer:} In spectral clustering, it is often straightforward to check that the clustering associated with the limiting operator is ``correct," and sometimes this clustering is even taken as the definition of a ``correct" clustering. In seriation, the correct ordering has an independent definition in terms of the embedded positions $v_{1},\ldots,v_{N}$. This means that we must check that the seriation obtained from the limiting operator actually agrees with this ordering. Checking this turns out to be non-trivial unless the generating graphon $w$ is very special, and relating the limiting operator to the embedded positions takes up most of Section \ref{SecFiedlerProp}.
    \item \textbf{Difficulty of problem:} In spectral clustering, the ``correct" clustering maps vertices to a finite set $\{1,2,\ldots,k\}$, where $k$ does not grow with the size $N$ of the observed graph. In seriation, the ``correct" ordering is a permutation on $\{1,2,\ldots,N\}$. The fact that the seriation problem attempts to estimate a much higher-dimensional object makes the problem substantially messier in a few ways. For example, it is often possible to perfectly recover the correct clustering with a finite sample size even without precise information about the data-generating process (see \textit{e.g.} \cite{daniely2012clustering}), but it is usually not possible to perfectly recover the solution to the seriation problem without having very accurate a priori knowledge of the graphon $w$ (see \textit{e.g.} \cite{janssen2019reconstruction}). 
\end{enumerate} 

We next relate our second main result, Inequality \eqref{IneqThm2Cons}, to the existing literature. 

Although analysis of the spectral embedding gives useful information, there are many possible spectral embeddings and it is difficult to find and analyze the ``best" one (see e.g. the discussion of various Laplacians in \cite{von2008consistency}). An alternative is to use a simple spectral embedding to obtain an initial rough solution, then use a simple post-processing algorithm to refine it. This is known to work well for the stochastic block model, the most popular toy model for the spectral clustering problem \cite{JMLR:v18:16-480}.  

We take a similar approach here to prove Inequality \eqref{IneqThm2Cons}. The main novelty is that we need to develop a post-processing algorithm, as the algorithms used for spectral clustering are not appropriate. Again, the fact that exact recovery is impossible adds some complication to our analysis.

\subsubsection{Previous Work: Similar Questions}

Our work was inspired by  \cite{rocha2018recovering,fogel2014serialrank}, which studied random graphs that were very similar, respectively, to  the one-parameter graphon $w(x,y) = p \, \mathbf{1}_{|x-y|\leq 0.5}$ for some $p \in [0,1]$ and the single graphon $w(x,y) = 1- |x-y|$. Like us, \cite{rocha2018recovering} follows in the footsteps of  \cite{von2008consistency} and analyzes the spectral seriation algorithm as a perturbation of a limiting algorithm. The main differences between our paper and \cite{rocha2018recovering,fogel2014serialrank} are:

\begin{enumerate}
    \item \textbf{Inequality \eqref{IneqThm1Cons}:} We deal with a large nonparametric family of models, while \cite{rocha2018recovering,fogel2014serialrank} are both restricted to small parametric families. Furthermore, we allow for sparser graphs than  \cite{rocha2018recovering} and more noise than \cite{fogel2014serialrank}.
    \item \textbf{Inequality \eqref{IneqThm2Cons}:} Using our post-processing algorithm, we obtain sharper results than previous work (even in the special cases that were focused on).
    \item \textbf{Technical differences:} Both \cite{rocha2018recovering,fogel2014serialrank} rely on detailed knowledge of the spectrum of a limiting operator, which is only possible because they study rather special  families of graph models. Since we deal with the nonparametric case, we do not have access to this information and need substantially different arguments. 
\end{enumerate}

The results in this paper are also closely related to those in \cite{janssen2019reconstruction}, which share an author. Both papers study a large nonparametric family of graphons. The main difference is that the present paper analyzes the very popular and relatively simple spectral seriation algorithm, while \cite{janssen2019reconstruction} develops new and rather complicated algorithms. As a secondary difference, the assumptions in our papers are not comparable. In particular, \cite{janssen2019reconstruction} requires that the \textit{square} of the adjacency matrix satisfy a weaker version of the Robinsonian property, and this does not always hold even for uniformly embedded graphons.

Finally, we note that there is a related literature on detecting and recovering embeddings on the \textit{cycle} rather than the \textit{path}, such as \cite{7924316,RePEc:inm:oropre:v:68:y:2020:i:1:p:53-70,DingSmallWorld}. These papers cover a wider range of algorithms than ours, including some analyses (e.g. of row-correlation) that are quite different from ours. However, they have essentially the same goal. The largest difference is that, like \cite{rocha2018recovering,fogel2014serialrank}, the papers \cite{7924316,RePEc:inm:oropre:v:68:y:2020:i:1:p:53-70,DingSmallWorld} are concerned with parametric families of graph with few parameters, and take advantage of this in developing and analyzing their algorithms.

\subsubsection{Previous Work: Summary and Further Reading}

Our work belongs to a very large literature on spectral embedding methods, and a smaller literature on attempting to order or embed vertices on an interval or cycle. The most important differences are:

\begin{enumerate}
    \item In contrast to most previous work on related questions, we consider a large nonparametric family of graphons. In particular, while our assumptions appear more technical than theirs, this is largely because we are considering a much broader class of models - they are often straightforward to check for the most common explicit parametric families. 
    \item In contrast to previous work on spectral clustering, we must check that the embeddings associated with our limiting operators actually give good solutions to the original problem. 
    \item In contrast to our own previous work  \cite{janssen2019reconstruction}, we consider both a large nonparametric families of graphons \textit{and} a simple, algorithm based on a widely-used approach.
\end{enumerate}

Finally, we mention recent work on several problems that have a similar feel to seriation, but which are not directly comparable to our note. Several papers, such as \cite{flammarion2019optimal} and \cite{recanati2018robust}, have studied what amount to other relaxation of the condition in Equation \eqref{Eq:SeriationProperty}. These relaxations are also reasonable versions of the ``statistical" or ``approximate" seriation problem, though the details are quite different. There have also been many articles on the closely-related problem of estimating a permutation from pairwise comparisons (see \textit{e.g.} \cite{mao2017minimax}) or other estimates related to the entries of a random matrix whose expected value is a Robinsonian matrix. See the longer paper \cite{janssen2019reconstruction} for a longer section relating our statistical seriation question to this work.

\subsection{Paper Guide}

In Section \ref{SecNotation}, we introduce the main pieces of notation used throughout the rest of the paper and formally state Theorems \ref{ThmMainThm} and \ref{ThmSomePostprocMain}, stronger versions of the first and second parts of Theorem \ref{ThmSimpleVersion}. 

In Section \ref{SecFiedlerProp}, we check that a conjectured limit of the graph Laplacian has certain good properties - primarily that  an associated eigenfunction is monotone. In Section \ref{SecConv}, we check that our graph Laplacian really does converge to this conjectured limit in a sufficiently strong sense.  In Section \ref{SecErrorBound}, we put together the bounds in Section \ref{SecFiedlerProp} and \ref{SecConv} to prove Theorem \ref{ThmMainThm}.

In Section \ref{SecPostProc}, we introduce the new post-processing algorithm and prove Theorem \ref{ThmSomePostprocMain}. 

In Section \ref{SecApplNice}, we check that the assumptions of Theorems \ref{ThmMainThm} and \ref{ThmSomePostprocMain} are weaker than the assumptions of  Theorem \ref{ThmSimpleVersion}. Finally, we have a short concluding discussion in Section \ref{SecOptDisc}.

\section{Notation and Main Results} \label{SecNotation}

\subsection{Notation for Random Graphs}

Let $w : [0,1]^2 \to [0,1]$ be a graphon. For each integer $N$,  consider the partition $(\iI_{i}^N)_{i=1,\ldots,N}$ of the interval $[0,1]$ defined by $\iI_{1}^N=[0,1/N]$ and $\iI_{i}^N=((i-1)/N,i/N]$ for $i=2,\ldots,N$. For a given graphon $w$ we define the \textit{model matrix} $P^{(N)}\in[0,1]^{N\times N}$ by $P_{ij}^{(N)}:= w(i/N,j/N)$ for all $i,j\in\left\{1,2,\ldots,N\right\}$.

For each $N \in \mathbb{N}$, we consider the sequence of random graphs $G^{(N)} \sim \rho_N w$, where $\rho_N \in (0,1]$ is a sequence regulating the sparsity of the graph. We denote by  $\widehat{P}^{(N)}$  the adjacency matrix of the graph $G^{(N)}$. We recall that, the graph $G^{(N)}$ has $N$ vertices and the edges are constructed as follows: for each $1 \leq i < j \leq N$, we add an undirected edge between the $i$'th and $j$'th vertices with probability $\rho_N P_{ij}^{(N)}$; these edges are added independently. In other words,
\begin{equation}\label{EqDefSamplingRule}
    \sP^{(N)}_{ij} \stackrel{\text{ind}}{\sim} \mathcal{B}\text{ernoulli}\left(\rho_N w(\frac{i}{N}, \frac{j}{N})\right).
\end{equation}

We view the graphon $\rho_N w$ as the limit of the sampling matrices $\sP^{(N)}$ under suitable rescaling (see \cite{glasscock2015graphon} for an explanation of how this notion of ``graph limit" can be made precise). This motivates the graphon analogue to property \eqref{Eq:SeriationPropertyStat}:
for $x,y,z\in [0,1]$,
\begin{align*}
    y<z<x  &\implies w(x,y)\leq w(x,z), \\
    x<y<z &\implies  w(x,y) \geq w(x,z).
\end{align*}

In analogy to \eqref{Eq:SeriationPropertyStat}, we call graphons with this property \textit{Robinsonian} (these are also what \cite{chuangpishit2015linear} calls \textit{diagonally increasing}).

We define  the sampled graphon $w_N$ by 
\begin{equation*}
    w_N(x,y) := \sum_{i=1}^N\sum_{j=1}^NP_{ij}^{(N)}1_{\iI_i^N}(x)1_{\iI_j^N}(y)
\end{equation*}
for all $(x,y)\in [0,1]^2$.

Similarly, the random graphon $\widehat{w}_N$ is defined by:
\begin{equation*}
    \widehat{w}_N(x,y) := \sum_{i=1}^N\sum_{j=1}^N  \sP_{ij}^{(N)}1_{\iI_i^N}(x)1_{\iI_j^N}(y). 
\end{equation*}

\subsection{Review of Relevant Facts from Spectral Theory}

We next need some basic notation related to operators. We denote by $L^2([0,1])$ the usual Hilbert space on (equivalence classes of) functions $f \, : \, [0,1] \mapsto \mathbb{R}$, with inner
product given by  $\langle f , g \rangle := \int_0^1 f(x)g(x)dx$ and norm denoted $\norm{f}$. Let $e\in L^2([0,1])$ denote the constant function, $e(x) := 1$ for all $x\in[0,1]$. We use blackboard bold symbols, such as $\mathbb{T}$, to denote linear operators acting on $L^2([0,1])$. The induced (operator) norm is defined as
\begin{equation*}
    \normoo{\mathbb{T}} := \sup_{f\in L^2([0,1]) \ \mathrm{s.t.} \ \norm{f}=1} \norm{\mathbb{T}f}.
\end{equation*}

For a self-adjoint bounded linear operator $\mathbb{T}$ we denote $\sigma(\mathbb{T})$ the spectrum of $\mathbb{T}$, which consists of the set of all $\lambda \in \mathbb{R}$ such that $\mathbb{T} - \lambda \mathbb{I}$ is not bijective. We recall some basic facts about the spectrum (see \textit{e.g.} \cite{reed2012methods}). The spectrum is always non-empty, closed and bounded. We define the \textit{discrete spectrum} $\sigma_{\mathrm{d}}(\mathbb{T})$ to be the part of $\sigma(\mathbb{T})$ which consists of all isolated points with finite algebraic multiplicity, which means that if $\lambda \in \sigma_d(\mathbb{T})$ then $\lambda$ is an eigenvalue of $\mathbb{T}$, that is there is some non-zero $f\in L^2([0,1])$ such that $\mathbb{T}f = \lambda f$, moreover $\cup_{r=1}^{\infty} \ker((\mathbb{T} - \lambda \mathbb{I})^r)$ is finite dimensional. We also define the \textit{essential spectrum} $\sigma_{\mathrm{ess}}(\mathbb{T}) = \sigma(\mathbb{T})\setminus\sigma_{\mathrm{d}}(\mathbb{T})$. According to \cite{edmunds1972non} the essential spectrum $\sigma_{\mathrm{ess}}(\mathbb{T})$ is the set of all numbers $\lambda$ in the spectrum $\sigma(\mathbb{T})$ of $\mathbb{T}$ such that at least one of the following conditions holds:
\begin{enumerate}[(i)]
    \item the range $R(\mathbb{T} - \lambda \mathbb{I})$ of $\mathbb{T} - \lambda \mathbb{I}$ is not closed;
    \item $\lambda$ is a limit point of $\sigma(\mathbb{T})$;
    \item $\cup_{r=1}^{\infty} \ker((\mathbb{T} - \lambda \mathbb{I})^r)$ is infinite dimensional.
\end{enumerate}

The essential spectrum is always closed, and the discrete spectrum can only have accumulation points on the boundary to the essential spectrum. For more results on the spectrum of linear operator we suggest \cite{reed2012methods}.  Denote by $\lambda_{\max}(\mathbb{T})$ the spectral radius of $\mathbb{T}$,
\begin{equation*}
    \lambda_{\max}(\mathbb{T}) = \sup\left\{\abs{\lambda} : \lambda \in \sigma(\mathbb{T})\right\}.
\end{equation*}
Note that  $\lambda_{\max}(\mathbb{T})\leq \normoo{\mathbb{T}}$. 

\subsection{Spectral Theory of the Graphon-Laplacian}

We define the \textit{graphon-Laplacian operator} $\mathbb{L}$ for a given graphon $w$ as

\begin{align}
    (\mathbb{L}f)(x) &:=\int_0^1w(x,y)(f(x)-f(y))dy \label{eq:LaplaceOperator}\\
    &= f(x)d(x)-\int_0^1 w(x,y)f(y)dy, \nonumber
\end{align}
where 
\begin{equation*}
     d(x)=\int_0^1w(x,y)dy
\end{equation*}

is the degree of $x$. The operator $\mathbb{L}$ for the graphon $w$ is the analogous of the Laplacian matrix of a graph. Similarly, let $\mathbb{L}_N$ and $\widehat{\mathbb{L}}_N$ denote the Laplacian operators associated to graphons $w_N$ and $\widehat{w}_N$ respectively, as defined in Equation~(\ref{eq:LaplaceOperator}).

We make some remarks on the spectrum of $\mathbb{L}$. Note that the operator $(\mathbb{W}f)(x) : = \int_0^1 w(x,y)f(y)dy$ is a Hilbert-Schmidt operator (see e.g. \cite{reed2012methods}), thus $\mathbb{W}$ is compact. Nevertheless, $\mathbb{L}$ is generally not compact since it is the sum of the compact operator  $\mathbb{W}$ and the multiplication operator $f(x) \mapsto d(x)f(x)$. It is well known that adding a compact operator does not change the essential spectrum (see e.g., \cite{gustafson1969essential}),  so the essential spectrum of $\mathbb{L}$ is the same as the essential spectrum of the multiplication operator $f(x) \mapsto d(x)f(x)$. The essential spectrum of this operator is known to be the range of the degree function $d$, i.e., $\sigma_{\mathrm{ess}}(\mathbb{L}) = rg(d)$ (see, \textit{e.g.}, \cite{von2008consistency} ). Note that the linear operator $\mathbb{L}$ is self-adjoint, bounded and positive.

It is easy to see that $\mathbb{L}e=0$, which means that $\lambda_1=0$ is an eigenvalue of $\mathbb{L}$. In fact it is the smallest eigenvalue, as:
\begin{equation}\label{eq:PositiveLaplacian}
    \langle \mathbb{L}f, f \rangle = \frac{1}{2}\int_0^1\int_0^1 w(x,y)(f(x)-f(y))^2dxdy\geq 0.
\end{equation}

If $\mathbb{L}f = 0$ and $w(x,y)$ is strictly positive for almost all $x,y$, then Inequality~(\ref{eq:PositiveLaplacian})  implies that $f\in \mathrm{Vect}(e)$.\footnote{We note here that any Robinsonian graphon $w$ that satisfies Assumption \ref{assumption:PartialDerivativeGraphon} will satisfy $w(x,y) > 0$ for all $x,y$. For that reason, the calculations in this paragraph will hold wherever Assumption \ref{assumption:PartialDerivativeGraphon} holds.} Thus, $\lambda_1=0$ is a  simple eigenvalue. Denote $\lambda_1=0 < \lambda_2 < \lambda_3 < \cdots$ the eigenvalues of $\mathbb{L}$ in increasing order (i.e., $\sigma_{\mathrm{d}}(\mathbb{L}) = \{ \lambda_1 = 0, \lambda_2, \lambda_3, \ldots\}$).  We call the \textit{Fiedler value} the smallest non-zero eigenvalue (i.e., $\lambda_2$) and any corresponding eigenfunction $\phi$ a \textit{Fiedler function}.




\subsection{Statement of Main Results}

The main results relies on the following conditions on the graphon:

\begin{assumption}[Lipschitz graphon]\label{assumption:lipschitz} There exists a constant $K>0$ such that 
$$
\abs{w(x,y)-w(x',y')}\leq K (\abs{x-x'}+\abs{y-y'})
$$
for all pairs $(x,y)\in [0,1]^2, (x',y')\in [0,1]^2$.
\end{assumption}

\begin{assumption}[Differentiable graphon]\label{assumption:PartialDerivativeGraphon} The partial derivative  $\partial w(x,\cdot)/\partial x$ both exists and is non-zero almost everywhere.
\end{assumption}

\begin{remark} \label{RemToyGraph}
We note that Assumption \ref{assumption:PartialDerivativeGraphon} is violated by the popular toy graphon model $w(x,y) = p \mathbb{I}_{|x-y| \leq 0.5}$, and so Theorem \ref{ThmMainThm} will not apply as-written to these models. This is somewhat unsatisfying, as this model and very similar ones were studied in \cite{rocha2018recovering,7924316,RePEc:inm:oropre:v:68:y:2020:i:1:p:53-70}. 

Fortunately, this problem is easy to fix, and our arguments were written with this in mind. The proof of Theorem \ref{ThmMainThm} only uses Assumption \ref{assumption:PartialDerivativeGraphon} inside of the proofs of the intermediate results Theorems \ref{thm:monotone} and \ref{thm:SimpleFiedlerValue}. Thus, both the conclusion and the proof of Theorem \ref{ThmMainThm} remain correct if we assume the conclusions of Theorems \ref{thm:monotone} and \ref{thm:SimpleFiedlerValue} directly. 

More precisely, Theorem \ref{ThmMainThm} remains true as stated if we substitute the following pair of assumptions for Assumption \ref{assumption:PartialDerivativeGraphon}:

\begin{enumerate}
    \item There exists a Fiedler function $\phi\in\mathcal{C}^1([0,1])$ and $L > 0$ such that $\inf_{x \in [0,1]} \phi'(x) = L$ (this corresponds to Theorem \ref{thm:monotone}).
    \item The eigenspace corresponding to eigenvalue $\lambda_{2}$ is one-dimensional (this corresponds to Theorem \ref{thm:SimpleFiedlerValue}).
\end{enumerate}

These two assumptions can be checked directly for the toy model $w(x,y) = p \mathbb{I}_{|x-y| \leq 0.5}$, and we expect this to be possible for other very symmetric models. Due to perturbation bounds for the spectrum of operators, it is also possible to check this condition for sufficiently small perturbations of such symmetric models. 
\end{remark}

\begin{assumption}\label{assumption:NonZerod'}
The derivative $d'(x)$ exists for all $x \in [0,1]$ and the set $N := \left\{ x \in [0,1] : d'(x) = 0 \right\}$ is countable. 
\end{assumption}

\begin{assumption}\label{assumption:InfimumInequality}
Let 
\begin{equation*}
    \mathcal{H} = \left\{f\in L^2([0,1]) : \int_0^1 f(t)dt = 0 \text{ and } \int_0^1 f^2(t)dt = 1\right\},
\end{equation*}
and $\mathbb{L}$ be the graphon-Laplacian operator associated with $w$. Assume that 
\begin{equation*}
    \inf_{f\in \mathcal{H}} \langle \mathbb{L}f, f \rangle < \inf_{x\in [0,1]} d(x).
\end{equation*}
\end{assumption}

Note that $\inf (\sigma_{\mathrm{ess}}(\mathbb{L})) = \inf_{x \in [0,1]} d(x)$. Applying this with the mini-max principle for self-adjoint operators (see \cite{teschl2009mathematical}), Assumption~\ref{assumption:InfimumInequality}  ensures that the Fiedler value satisfies $\lambda_2 < \inf_{x \in [0,1]} d(x)$.

Although these assumptions may seem complicated, especially Assumption \ref{assumption:InfimumInequality}, all ``nice'' graphons in the sense of Definition \ref{Assumption:weakfamily} satisfy these assumptions. For the proof of this result, see Section \ref{SecApplNice}.

Our main result is the following consistency result for spectral seriation algorithm:

\begin{thm} \label{ThmMainThm}
Fix $\tau \in [0,1)$, $\eta<\frac{1-\tau}{3}$ and $\varepsilon\in (0,1)$. Let $\rho_{N} = N^{-\tau}$. Fix a graphon $w$ satisfying Assumptions \ref{assumption:lipschitz} to \ref{assumption:InfimumInequality}. Consider a family of random graphs  $\{G^{(N)}\}_{N \in \mathbb{N}}$ where $G^{(N)} \sim \rho_N w$ according to Equation~\eqref{EqDefSamplingRule}. Let $\hat{\sigma}^{(N)}$ be the output of Algorithm \ref{Alg:SpectralSeriation} with input $G^{(N)}$. Then there exists a positive constants $C = C(w)$ and an integer $N_{0} = N_{0}(w, \tau,\eta, \varepsilon)$ so that 
\begin{equation}\label{eq:approx1}
    \Prob{ \norm{\hat{\sigma}^{(N)} - id_N}_1 \leq C N^{2-\eta}} \geq 1 - \varepsilon
\end{equation}
for all $N \geq N_{0}$. 

Moreover, for any $\gamma > 1$ there exists a positive constants $C' = C'(w)$ and $N'_0 = N'_0(w, \tau, \eta, \gamma)$ so that 
\begin{equation}\label{eq:approx2}
    \Prob{ \norm{\hat{\sigma}^{(N)} - id_N}_1 \leq C' N^{2-\eta}} \geq 1 - \varepsilon_{\gamma}(N)
\end{equation}
for all $N \geq N'_0$.
\end{thm}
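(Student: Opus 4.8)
The plan is to exhibit the spectral seriation output $\hat{\sigma}^{(N)}$ as a small perturbation of the ordering $\sigma_\phi$ induced by the Fiedler function $\phi$ of the limiting operator $\mathbb{L}$, which by the results of Section~\ref{SecFiedlerProp} has two crucial features: $\lambda_2$ is a simple eigenvalue (Theorem~\ref{thm:SimpleFiedlerValue}), and one may choose the Fiedler function $\phi\in\mathcal{C}^1([0,1])$ with $\phi'(x)\ge L$ for some $L=L(w)>0$ (Theorem~\ref{thm:monotone}), so that $\sigma_\phi=id_N$ up to reversal and $\phi(i/N)-\phi(j/N)\ge L(i-j)/N$ whenever $i>j$. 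Since $\lambda_1=0<\lambda_2$, since $\lambda_2$ is isolated with one-dimensional eigenspace, and since Assumption~\ref{assumption:InfimumInequality} forces $\lambda_2<\inf_x d(x)=\inf\sigma_{\mathrm{ess}}(\mathbb{L})$, the spectral gap $g:=\mathrm{dist}\bigl(\lambda_2,\,\sigma(\mathbb{L})\setminus\{\lambda_2\}\bigr)$ is a strictly positive constant depending only on $w$; all constants below will depend on $w$ only through $L$, $g$, and the Lipschitz constant $K$ of Assumption~\ref{assumption:lipschitz}.

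The technical heart of the argument — the content of Section~\ref{SecConv} — is a quantitative operator-norm law of large numbers for the rescaled empirical Laplacian. Identifying $\tfrac{1}{\rho_N N}\widehat{L}_N$, where $\widehat{L}_N = D_{\widehat{P}^{(N)}}-\widehat{P}^{(N)}$ is the Laplacian matrix of $G^{(N)}$, with the operator $\tfrac{1}{\rho_N}\widehat{\mathbb{L}}_N$ acting on step functions, I would show that there is a deviation rate $\delta_N$ of polynomial order $N^{-(1-\tau)/2}\,\mathrm{polylog}(N)$ with $\bigl\| \tfrac{1}{\rho_N}\widehat{\mathbb{L}}_N - \mathbb{L} \bigr\|_{\mathrm{op}} \le \delta_N$ with probability at least $1-\varepsilon_\gamma(N)$ (and, with a smaller deviation parameter, at least $1-\varepsilon$). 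Writing $\tfrac{1}{\rho_N}\widehat{\mathbb{L}}_N-\mathbb{L}=\bigl(\tfrac{1}{\rho_N}\widehat{\mathbb{L}}_N-\mathbb{L}_N\bigr)+(\mathbb{L}_N-\mathbb{L})$, the second term is deterministic and $O(K/N)$ by Assumption~\ref{assumption:lipschitz} (the Hilbert--Schmidt bound handles the integral part, a sup bound the degree-multiplication part), while the first term splits into the centered degree diagonal, whose operator norm is its largest entry and is $O(\sqrt{N\rho_N\log^\gamma N})$ by Bernstein's inequality and a union bound, and $\widehat{P}^{(N)}-\rho_N P^{(N)}$, whose operator norm is $O(\sqrt{N\rho_N\log^\gamma N})$ by matrix Bernstein (after a routine truncation of the highest-degree vertices if needed); since $N\rho_N=N^{1-\tau}$ dominates $\log^\gamma N$, dividing by $\rho_N N$ yields the claimed $\delta_N$. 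Once $\delta_N<g/6$, a Riesz-projection (Davis--Kahan-type) estimate for the bounded self-adjoint perturbation shows that the contour of radius $g/3$ about $\lambda_2$ encloses exactly one eigenvalue of $\tfrac{1}{\rho_N N}\widehat{L}_N$ — necessarily its second smallest, since $0$ is an eigenvalue exactly and the rest of the spectrum is pushed above $\min(\lambda_3,\inf_x d(x))-\delta_N>\lambda_2+g/3$ — with step-function eigenvector $\hat{\phi}$ satisfying $\|\hat{\phi}-\phi\|\le C g^{-1}\delta_N$ after the sign of $\hat{\phi}$ is matched to $\phi$.

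The final step converts this $L^2$ estimate into an $\ell^1$ estimate on $\hat{\sigma}^{(N)}=\sigma_{\hat\phi}$ using the lower bound on $\phi'$. Let $r_i := \bigl(N\int_{\iI_i^N}(\hat\phi-\phi)^2\bigr)^{1/2}$, so that $\sum_i r_i^2 = N\|\hat\phi-\phi\|^2$ and hence $\sum_i r_i\le \sqrt{N}\,(\sum_i r_i^2)^{1/2}=N\|\hat\phi-\phi\|$. An inversion of coordinates $i>j$ in $\hat\phi$ relative to $\phi$ forces $L(i-j)/N\le r_i+r_j+O(K/N)$, hence $i-j\le \tfrac{2N}{L}\max(r_i,r_j)+O(1)$; double-counting over pairs then gives $\|\sigma_{\hat\phi}-id_N\|_1\le C'\bigl(N+N^2\|\hat\phi-\phi\|\bigr)$, which on the event of the previous paragraph is $O_w\bigl(N^{2-(1-\tau)/2}\,\mathrm{polylog}(N)\bigr)$ — below $CN^{2-\eta}$ for $N\ge N_0$ since $\eta<(1-\tau)/3$ (indeed since $\eta<(1-\tau)/2$); the reversal ambiguity is absorbed by the symmetrized metric convention~\eqref{EqSymmMetric}. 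Inequality~\eqref{eq:approx1} follows by tuning the deviation parameter to probability $1-\varepsilon$ and taking $N_0$ large enough to fold the resulting constant into $C(w)$, and Inequality~\eqref{eq:approx2} by using deviation parameter $\log^\gamma N$, which is exactly what matrix Bernstein delivers at probability $1-\exp(-\log^\gamma N)$. I expect the main obstacle to be the second paragraph: obtaining the operator-norm (not Frobenius- or cut-norm, which are off by a factor $\rho_N^{-1/2}$ and do not even vanish) concentration simultaneously at the sparsity level $\rho_N=N^{-\tau}$ and at the super-polynomial probability $1-\varepsilon_\gamma(N)$, together with checking that the perturbation bound genuinely survives the non-compactness of $\mathbb{L}$ — i.e.\ that $\lambda_2$ stays isolated from the essential spectrum and that no constant secretly depends on $N$.
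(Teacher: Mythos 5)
Your proposal is correct, and its first two stages are essentially the paper's own route: you invoke exactly Theorems \ref{thm:monotone} and \ref{thm:SimpleFiedlerValue} for the limiting Fiedler data, prove operator-norm concentration of $\rho_N^{-1}\widehat{\mathbb{L}}_N$ around $\mathbb{L}$ by splitting off the deterministic discretization error $O(K/N)$ and treating the degree diagonal by Bernstein plus a union bound and the centered adjacency matrix by a matrix-Bernstein-type bound (the paper uses Theorem 1 of \cite{chung2011spectra} here, which is the same estimate), and then apply Davis--Kahan, exactly as in Theorems \ref{thm:ConvLaplace} and \ref{thm:FiedlerConv}; your explicit use of the gap $\mathrm{dist}(\lambda_2,\sigma(\mathbb{L})\setminus\{\lambda_2\})$ and of Assumption \ref{assumption:InfimumInequality} to keep $\lambda_2$ away from the essential spectrum is, if anything, slightly more careful than the paper's statement of the $\sin\theta$ theorem in terms of $\lambda_3$. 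Where you genuinely diverge is the final conversion from eigenfunction error to permutation error (Section \ref{SecErrorBound}): the paper splits the inversion set at length $N^{1-\eta}$, bounds the long inversions by $\tfrac{4}{L^2}N^{2+2\eta}\|\hat{\phi}_N-\phi\|^2$ (quadratic in the $L^2$ error) and the short ones trivially by $N^{2-\eta}$, which is precisely where the constraint $\eta<\tfrac{1-\tau}{3}$ enters; you instead attribute each inverted pair to the index with the larger local error $r_i$ and use Cauchy--Schwarz ($\sum_i r_i\le N\|\hat{\phi}_N-\phi\|$) to get a count of inversions that is \emph{linear} in $\|\hat{\phi}_N-\phi\|$, namely $O(N+N^2\|\hat{\phi}_N-\phi\|)=O(N^{2-(1-\tau)/2}\,\mathrm{polylog}(N))$, after which the Diaconis--Graham inequality (already cited by the paper) converts inversions to the $\ell^1$ distance. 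Your version is therefore slightly sharper: it delivers the stated $N^{2-\eta}$ bound for all $\eta<\tfrac{1-\tau}{2}$, not just $\eta<\tfrac{1-\tau}{3}$, at the cost of a marginally more delicate double-counting step (which does check out; note only that your $O(K/N)$ slack should really be $O(\sup_x\phi'(x)/N)$, a harmless $w$-dependent constant since $\phi\in\mathcal{C}^1([0,1])$).
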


 Note that Inequality~\eqref{eq:approx2} looks very similar to Inequality~\eqref{eq:approx1} if we choose $\varepsilon = \varepsilon_{\gamma}(N)$. Although the two are very similar, the result in~\eqref{eq:approx2} can not be obtained as a consequence of~\eqref{eq:approx1}. In fact, the values of $N$ for which the first inequality is satisfied depend on $N_0$ which itself depend on $\varepsilon$.

With appropriate post-processing and some additional assumptions, we can achieve tighter estimates. For fixed $0 < \alpha < 0.5$, define the functions $\Psi_{R}, \Psi_{L}$ by
\begin{equation} \label{EqDefBigPsi}
    \Psi_{R}(x) := \int_{1-\alpha}^{1} w(x,y) dy \, \text{ and } \, \Psi_{L}(x) := \int_{0}^{\alpha} w(x,y) dy.
\end{equation}

These functions count the expected number of neighbours in the right- and left-most $\alpha$ percent of vertices. Assume:

\begin{assumption} \label{AssumptionPsiOK}

There exists some constants $0<\alpha < \beta <1/2$ such that:

\begin{enumerate}
    \item \textbf{Mean distance inequality:}
    \begin{equation}\label{IneqMDI}
        \inf_{x \in [1-\alpha,1]} \Psi_{R}(x) > \Psi_{R}(1-\beta) , \qquad \inf_{x \in [0,\alpha]} \Psi_{L}(x) > \Psi_{L}(\beta),
    \end{equation}
    and 
    \item \textbf{Distinguishability inequality:}  There exists some $d_1 > 0$ such that

    \begin{equation}\label{ineq:weaker-assumption}
       \{ \min(|x-z|, |y-z|) \geq \frac{\beta-\alpha}{2}\} \Rightarrow \{ |w(y,z)-w(x,z)| \geq d_{1} |x-y| \}
    \end{equation}
    for $x,y,z\in [0,1]$.
\end{enumerate}
\end{assumption}

This assumption holds also for all graphons that are ``nice'' in the sense of Definition \ref{Assumption:weakfamily}.  For the proof of this result, see Section \ref{SecApplNice}. When a graphon satisfies Inequality \eqref{IneqMDI} for \textit{some} pair $(\alpha,\beta)$ and also satisfies our other assumptions, it is possible to quickly find the pair $(\alpha,\beta)$; see Remark \ref{RemFindingAlphaBeta}.

The following theorem gives a much faster convergence rate under Assumption~\ref{AssumptionPsiOK}:

\begin{thm} \label{ThmSomePostprocMain}
Let $\rho_{N} = 1$, fix a graphon $w$ satisfying Assumptions  \ref{assumption:lipschitz} to \ref{AssumptionPsiOK}. Consider a family of random graphs  $\{G^{(N)}\}_{N \in \mathbb{N}}$ where $G^{(N)} \sim w$ according to Equation~\eqref{EqDefSamplingRule}. Let $\hat{\sigma}^{(N)}$ be the output of Algorithm \ref{AlgPostProc} with input $G^{(N)}$ and parameters $\alpha, \beta$ as in assumption~\ref{AssumptionPsiOK}.  Then for any $\gamma > 1$ and $\epsilon > 0$ there exists a positive constant $\delta = \delta(w,\gamma, \epsilon)$ and an integer $N_0 = N_0(w, \gamma, \epsilon)$ so that 
\begin{equation*}
    \Prob{ \norm{\hat{\sigma}^{(N)} - id_N}_{\infty} \leq \epsilon \sqrt{N\log^{\gamma}(N)}} \geq 1 - \varepsilon_{\gamma}^{\delta}(N)
\end{equation*}
for all $N \geq N_0$.
\end{thm}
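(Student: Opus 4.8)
\emph{Sketch of proof.}

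The plan is to use plain spectral seriation (Theorem~\ref{ThmMainThm}, applied with $\tau=0$ since $\rho_N=1$, and in its super-polynomial form~\eqref{eq:approx2}) only as a crude device for locating the two endpoints, and then to obtain the sharp $\ell_\infty$ rate by a local neighbour-counting argument: the position of a vertex $v_i$ is estimated from the numbers $S_i^R:=|\mathcal N(v_i)\cap\widehat R|$ and $S_i^L:=|\mathcal N(v_i)\cap\widehat L|$ of its neighbours inside reliable anchor sets $\widehat R,\widehat L$ near the right and left ends. The rate $\sqrt{N\log^\gamma N}$ comes from balancing two effects. On the signal side, conditionally on $\widehat R$ the mean of $S_i^R$ is $|\widehat R|\,\overline w_{\widehat R}(v_i)$ up to negligible corrections, where $\overline w_{\widehat R}(x):=|\widehat R|^{-1}\sum_{z\in\widehat R}w(x,v_z)$; using the Robinsonian monotonicity of $w$ for the generic anchors together with the distinguishability inequality~\eqref{ineq:weaker-assumption} for the ``core'' anchors in $[1-\alpha,1]$, one gets that $x\mapsto\overline w_{\widehat R}(x)$ is increasing with derivative bounded below by an explicit constant $c(\alpha,\beta)\,d_1>0$ on the interval $[0,1-\tfrac{\alpha+\beta}{2}]$ (exactly the $x$ for which every core anchor is at distance $\ge(\beta-\alpha)/2$). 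On the noise side, $S_i^R$ is a sum of at most $\beta N$ independent Bernoulli contributions, so Bernstein's inequality gives $|S_i^R-\mathbb E[S_i^R\mid\widehat R]|\le t$ off an event of probability $2\exp(-\Omega(t^2/N))$; a union bound over the $N$ vertices together with the requirement that the total failure probability be at most $\varepsilon_\gamma^\delta(N)$ forces only $t$ of order $\sqrt{\delta\,N\log^\gamma N}$, because $\gamma>1$ makes $\exp(-\delta\log^\gamma N)$ dominate the polynomial union-bound factor even for small $\delta$. Sorting by $S^R$ can then invert a pair $i<i'$ (both in the good interval) only when $\mathbb E[S_{i'}^R]-\mathbb E[S_i^R]\lesssim t$, i.e. only when $i'-i\lesssim t/(\alpha\,c(\alpha,\beta)\,d_1)=O(\sqrt\delta\,\sqrt{N\log^\gamma N})$; choosing $\delta=\delta(w,\gamma,\epsilon)$ small enough makes the implied constant $\le\epsilon$.

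Concretely I would carry this out in four steps. \emph{(1) Rough ordering.} Apply Theorem~\ref{ThmMainThm} to the spectral output $\widehat\sigma_0^{(N)}$ of Algorithm~\ref{Alg:SpectralSeriation}: on an event of probability $\ge1-\varepsilon_\gamma(N)$ one has $\norm{\widehat\sigma_0^{(N)}-id_N}_1\le CN^{2-\eta}$ for some $\eta\in(0,1/3)$, so (by distinctness of the displacements) at most $o(N)$ vertices are displaced by more than $o(N)$ positions; in particular the candidate end-blocks $\{i:\widehat\sigma_0^{(N)}(i)>(1-\alpha)N\}$ and $\{i:\widehat\sigma_0^{(N)}(i)\le\alpha N\}$ differ from the true end-blocks by $o(N)$ vertices, and all but $o(N)$ of their members have true position in $[1-\beta,1]$ resp. $[0,\beta]$. \emph{(2) Anchor cleaning.} Keep in each candidate set only the vertices whose number of neighbours in it exceeds the midpoint of the gap in the mean-distance inequality~\eqref{IneqMDI}; these neighbour counts are sums of $\Theta(N)$ Bernoullis and hence lie within $o(N)$ of $N\Psi_R$ resp. $N\Psi_L$, while the $o(N)$ corruption of the candidate set shifts them by a further $o(N)$, so --- since the gap is $\Theta(N)$ --- the thresholding produces sets with $[1-\alpha,1]\subseteq\widehat R\subseteq[1-\beta,1]$ and $[0,\alpha]\subseteq\widehat L\subseteq[0,\beta]$ up to $o(N)$ vertices, and in particular $\alpha N-o(N)\le|\widehat R|,|\widehat L|\le\beta N+o(N)$. \emph{(3) Two local orderings.} Form the scores $S_i^R,S_i^L$, let $\widehat\sigma_R,\widehat\sigma_L$ be the induced orderings via~\eqref{Eq:DefFuncToPerm}, and fix their orientations against each other and against $\widehat\sigma_0^{(N)}$ --- this is the ``align several estimated orderings'' step of Algorithm~\ref{AlgPostProc}. \emph{(4) Error bound and gluing.} By the rate computation above, $\widehat\sigma_R$ matches $id_N$ up to $\epsilon\sqrt{N\log^\gamma N}$ on $[1,(1-\tfrac{\alpha+\beta}{2})N]$ and $\widehat\sigma_L$ does so on $[\tfrac{\alpha+\beta}{2}N,N]$; since $\alpha+\beta<1$ these two ranges overlap and cover $[N]$, so a monotone reconciliation of the two orderings on the overlap yields $\widehat\sigma^{(N)}$ with $\norm{\widehat\sigma^{(N)}-id_N}_\infty\le\epsilon\sqrt{N\log^\gamma N}$ on an event of probability $\ge1-\varepsilon_\gamma^\delta(N)$.

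I expect the main obstacle to be the statistical coupling between the anchor sets and the scores: $\widehat\sigma_0^{(N)}$, the cleaning counts, and the scores $S_i^R$ are all functions of the same random graph, so $S_i^R$ and $\widehat R$ are dependent and Bernstein's inequality cannot be applied verbatim. Algorithm~\ref{AlgPostProc} must therefore be arranged so that endpoint detection, anchor cleaning, and vertex scoring use (essentially) disjoint information --- e.g. a few-way sample splitting of the edge set (harmless since $\rho_N=1$, changing only constants), or a leave-one-out stability estimate showing that resampling the edges at one vertex moves $\widehat R$ by $o(|\widehat R|)$. Reconciling this decoupling with the $o(N)$ bookkeeping of Steps (1)--(2) is the delicate part. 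A secondary subtlety is identifying \emph{which} vertices must be classified correctly in Step (2): a vertex whose true position is near the block boundary $(1-\alpha)N$ may be misclassified by $\widehat\sigma_0^{(N)}$, but its membership in $\widehat R$ is harmless because it is still a legitimate anchor in $[1-\beta,1]$; only vertices with true position well inside $[0,1-\beta]$ must be excluded, and those are displaced by $\ge(\beta-\alpha)N/2$ and hence number $O(N^{1-\eta})=o(N)$. The remaining ingredients are routine: propagating the bound on inversions to a bound on $\norm{\cdot}_\infty$; controlling the imperfect-anchor correction inside the \emph{difference} $\mathbb E[S_{i'}^R]-\mathbb E[S_i^R]$ via the Lipschitz bound of Assumption~\ref{assumption:lipschitz} (so it is $o(i'-i)$, not $o(N)$); the $O(1)$ Riemann-sum errors between $\sum_z w(v_i,v_z)$ and the corresponding integrals; and the orientation alignment.
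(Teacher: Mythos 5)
Your overall mechanism is the same one the paper uses (coarse spectral ordering to locate anchor end-sets, then neighbour counts into those anchors, a signal slope coming from the distinguishability inequality \eqref{ineq:weaker-assumption} with the anchor impurity controlled through the $\ell_1$ bound of Theorem \ref{ThmMainThm} and the Lipschitz assumption, and a Bernstein/Hoeffding plus union-bound computation at scale $\sqrt{N\log^\gamma N}$). But there is a genuine gap at exactly the point you flag as the ``main obstacle'': the dependence between the anchor sets and the scores. You leave this unresolved and propose to fix it by redesigning the procedure (edge-set splitting, or a leave-one-out stability argument that you do not carry out). The theorem, however, is about the output of Algorithm \ref{AlgPostProc} as stated, and that algorithm already resolves the coupling by construction: it partitions the \emph{vertex} set into three parts and, in each call to Algorithm \ref{AlgPostProcSplit}, the anchors $R,L$ are computed from the induced subgraph on one part $T$ while the scored vertices lie in a \emph{disjoint} part $S$, so the edges counted in $\psi_R(v),\psi_L(v)$ are independent of the $\sigma$-algebra $\mathcal F_T$ that determines $R,L$ (this is why the paper can condition on $\mathcal F_T$ and apply concentration cleanly, as in Lemma \ref{lemma:psiR-PsiR-bound} and Proposition \ref{prop:accurate-order}). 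An argument for an edge-split variant does not establish the stated result, and your global version (scoring all $N$ vertices, including the anchors themselves, against anchors extracted from the same graph) has the circularity fully intact.

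Two secondary points. First, because the paper scores only one part at a time, it must then convert within-part ranks into global positions: this is the merged ordering \eqref{eq:merged-permutation}, the alignment step \eqref{EqAlignDef} against a reference ordering computed from an independent second partition, and the order-statistic concentration of Lemma \ref{lemma:order-stat-bound}; your plan avoids merging but only because of the unproved decoupling above. Second, your treatment of the region where right-counting fails (vertices near $1$) is a ``monotone reconciliation on the overlap'' of two partial orderings $\hat\sigma_R,\hat\sigma_L$, which is left unspecified; the paper instead handles this inside a single ordering via the empirical quantiles $c_R,c_L$, the first part of Assumption \ref{AssumptionPsiOK} (through Lemma \ref{lemma:CR-CL-bound} and Proposition \ref{PropLotsOfPsiInfo}), the four-branch comparison function \eqref{EqDefAntisymmetric1}, and the Borda-type score \eqref{eq:f-hat} (Lemmas \ref{lemma:almost-correct-order} and \ref{lemma:almost-correct-ordering-function}), which is what turns pairwise decisions into the sup-norm bound of Theorem \ref{thm:error-bound-alg2}. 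Your use of the mean-distance inequality \eqref{IneqMDI} for ``anchor cleaning'' is not how the paper (or the algorithm) uses it, and with the paper's disjoint-part construction no cleaning step is needed. To repair your write-up, replace the edge-splitting/leave-one-out speculation with the algorithm's actual $(T,S)$ vertex-partition decoupling, and either specify the reconciliation step precisely or adopt the quantile-plus-$\hat F$ device.
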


\begin{remark}
Although we give specific functions $\Psi_{R}, \Psi_{L}$ in Equation \eqref{EqDefBigPsi}, our proofs only rely on three properties of this pair of functions: they (i) satisfy Assumption \ref{AssumptionPsiOK}, (ii) have derivatives bounded away from 0 on $[0,1-\beta]$ and $[\beta,1]$ respectively, and (iii) can be estimated with error roughly $O\left(\sqrt{\frac{\log(N)}{N}}\right)$ from a graph of size $N$. We state our results in terms of the specific functions $\Psi_{R},\Psi_{L}$ because it is easy to (i) compute them and (ii) verify our assumptions for the most popular graphon families. 
\end{remark}

\section{Properties of the Fiedler value and the Fiedler function} \label{SecFiedlerProp}

In this section we establish basic facts about the spectrum of the graphon-Laplacian operator $\mathbb{L}$. Throughout this section, we fix a graphon $w$ satisfying Assumptions~\ref{assumption:lipschitz} to \ref{assumption:InfimumInequality}, then define $\mathbb{L}$ as in Equation \eqref{eq:LaplaceOperator}. We also denote by $\lambda_{2}$ its Fiedler eigenvalue. For $x \in \mathbb{R}$, define $D_{x} = d^{-1}(\{x\})$. The following lemma ensures that most of the eigenfunctions of $\mathbb{L}$ are $\mathcal{C}^1$.
\begin{lemma}\label{thm:SmoothEigenfunction} Let $\lambda$ be any eigenvalue of $\mathbb{L}$ and $f$ any eigenfunction associated to the eigenvalue $\lambda$. Then the set $D_{\lambda}$ is discrete and $f$ is $\mathcal{C}^1$ in $D_{\lambda}^c$. 

In particular, if $\lambda\notin rg(d)$, then $f$ is $\mathcal{C}^1$ in $[0,1]$. 
\end{lemma}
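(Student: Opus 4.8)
The starting point is the eigenvalue equation written in the form that appears just after Equation~\eqref{eq:LaplaceOperator}: if $\mathbb{L}f = \lambda f$, then for all $x \in [0,1]$ we have $f(x)d(x) - (\mathbb{W}f)(x) = \lambda f(x)$, i.e.
\begin{equation*}
    (d(x) - \lambda) f(x) = (\mathbb{W}f)(x) = \int_0^1 w(x,y) f(y)\, dy.
\end{equation*}
The plan is to exploit the fact that the right-hand side is a nice (in fact $\mathcal{C}^1$) function of $x$, because $w$ is Lipschitz (Assumption~\ref{assumption:lipschitz}) and differentiable in its first argument (Assumption~\ref{assumption:PartialDerivativeGraphon}), and then solve for $f$ by dividing by $d(x) - \lambda$. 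First I would show $D_{\lambda} = d^{-1}(\{\lambda\})$ is discrete: since $w$ is Lipschitz, $d(x) = \int_0^1 w(x,y)\,dy$ is Lipschitz, and (this is where I would need a little care) under Assumption~\ref{assumption:NonZerod'} the derivative $d'$ vanishes only on a countable set, so the level set where $d(x) = \lambda$ cannot contain an accumulation point — if it did, by Rolle/the mean value theorem $d'$ would vanish on an uncountable set near that point, or more carefully, a level set of a $\mathcal{C}^1$ function with nowhere-dense zero set of the derivative is discrete. This gives the first claim.

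Next, on the open set $D_{\lambda}^c$, I would argue that $g(x) := (\mathbb{W}f)(x)$ is $\mathcal{C}^1$. The key estimate is that for the Hilbert--Schmidt operator $\mathbb{W}$, $|g(x) - g(x')| = |\int_0^1 (w(x,y) - w(x',y)) f(y)\, dy| \leq K|x-x'|\,\|f\|_1 \leq K |x-x'| \|f\|$ by Cauchy--Schwarz, so $g$ is Lipschitz, hence $f = g/(d - \lambda)$ is locally Lipschitz (hence continuous) away from $D_\lambda$; then bootstrapping, since $f$ is now continuous, $\partial w(x,y)/\partial x$ exists a.e.\ and is bounded (Lipschitz $w$), dominated convergence gives $g'(x) = \int_0^1 \frac{\partial w}{\partial x}(x,y) f(y)\, dy$, and $d'(x) = \int_0^1 \frac{\partial w}{\partial x}(x,y)\, dy$ exists and is continuous, so $f = g/(d-\lambda)$ is $\mathcal{C}^1$ on $D_\lambda^c$ by the quotient rule, the denominator being bounded away from $0$ on compact subsets of $D_\lambda^c$. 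The final ``in particular'' is immediate: if $\lambda \notin rg(d)$ then $D_\lambda = \emptyset$, so $D_\lambda^c = [0,1]$ and $f$ is $\mathcal{C}^1$ everywhere.

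The main obstacle I anticipate is the regularity bootstrapping and, slightly, the discreteness of $D_\lambda$. For the bootstrap one must be careful about the order of claims: one cannot differentiate under the integral sign in $\mathbb{W}f$ until one knows enough about $f$ (measurability and integrability suffice, which one gets for free from $f \in L^2$, so actually differentiability of $g$ comes cheaply from Assumption~\ref{assumption:lipschitz} alone — the subtlety is just that $\partial w/\partial x$ exists a.e.\ rather than everywhere, which is handled by dominated convergence since the difference quotients are uniformly bounded by $K$). For the discreteness of $D_\lambda$, the cleanest route is: suppose $x_n \to x_\infty$ with $d(x_n) = \lambda$ for all $n$; then $d$ is constant on $\{x_n\} \cup \{x_\infty\}$, and since $d \in \mathcal{C}^1$ (established above), between consecutive $x_n$ there is a point where $d' = 0$ by Rolle, producing infinitely many zeros of $d'$ accumulating at $x_\infty$ — this alone does not contradict countability, so one needs the slightly stronger observation that a $\mathcal{C}^1$ function whose derivative-zero-set is countable (hence has empty interior and is nowhere dense) has discrete level sets, because on any interval where $d \equiv \lambda$ we'd have $d' \equiv 0$ on an interval, contradicting countability of $N$; and an accumulation point of a level set that is not interior to the level set still forces, via the mean value theorem applied to nested subintervals, a nontrivial interval structure — here I would lean on the fact that $d$ restricted to a neighborhood of $x_\infty$ is a $\mathcal{C}^1$ function equal to $\lambda$ on a set with an accumulation point, and invoke that such a function must be locally monotone somewhere by Assumption~\ref{assumption:NonZerod'}, contradicting the accumulation. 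I would present this last point carefully but expect it to be the only genuinely delicate piece.
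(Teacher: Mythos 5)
Your treatment of the $\mathcal{C}^1$ part is fine and is essentially the paper's argument rearranged: the paper differentiates the difference quotient of the eigen-equation directly, while you solve $f=\mathbb{W}f/(d-\lambda)$ on $D_\lambda^{c}$ and use the quotient rule; both rest on the same dominated-convergence step with difference quotients of $w$ bounded by $K$ (Assumption~\ref{assumption:lipschitz}) and the a.e.\ existence of $\partial w/\partial x$ (Assumption~\ref{assumption:PartialDerivativeGraphon}), and your route even handles the choice of the continuous representative of $f$ somewhat more cleanly. Both you and the paper are equally terse about continuity of the resulting derivative, so I do not press that point.

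The genuine gap is in the discreteness of $D_\lambda$. The principles you lean on --- that a $\mathcal{C}^1$ function whose derivative vanishes only on a countable (or nowhere dense) set has discrete level sets, or that ``local monotonicity somewhere'' rules out accumulation --- are false. Take $d(x)=x^{3}\sin(1/x)$ with $d(0)=0$: this is $\mathcal{C}^1$ on $[0,1]$, its critical set is countable, closed and nowhere dense (the solutions of $\tan(1/x)=1/(3x)$ together with $0$), yet the level set $\{d=0\}=\{0\}\cup\{1/(n\pi)\,:\,n\geq 1\}$ accumulates at $0$. Your Rolle argument produces only countably many zeros of $d'$ near an accumulation point, which, as you yourself note, contradicts nothing, and the appeal to local monotonicity does not repair it: in the example $d$ is monotone on many small intervals near $0$ while the level set still accumulates there. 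The paper's argument is different and much simpler: split $D_\lambda$ into $D_\lambda\cap N$ and $D_\lambda\setminus N$, where $N=\{x : d'(x)=0\}$. Every $x\in D_\lambda\setminus N$ is isolated in $D_\lambda$, because $d(x+h)-\lambda=d'(x)h+o(h)\neq 0$ for all small $h\neq 0$; and $D_\lambda\cap N$ is countable by Assumption~\ref{assumption:NonZerod'}. Hence $D_\lambda$ is countable, which is exactly what is used downstream (e.g.\ to conclude that $f'$ exists almost everywhere), and no Rolle-type or level-set argument is needed. As written, your proposal neither makes this split nor supplies a correct substitute, so the first claim of the lemma is not established.
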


\begin{proof}
 First we show that under Assumption~\ref{assumption:NonZerod'} the set $D_{\lambda}$ is discrete. Let $N = \{x\in [0,1] : d'(x)=0\}$ as in Assumption \ref{assumption:NonZerod'}. Since $N$ is discrete, it is enough to show that $D'=D_{\lambda}\setminus N$ is discrete. Let $x\in D'$, so that $d(x)=\lambda$ and $d'(x)\neq 0$. Moreover, $d(x+h)-\lambda= d(x+h)-d(x) = d'(x)h+o(h)$, hence $d(y)\neq \lambda$ for all $y\neq x$ in some neighborhood $U_x$ of $x$. Consequently every point $x\in D'$ has a neighborhood $U_x$ such that $D'\cap U_x = \{x\}$ as required.
 
 Now take $x \in D_{\lambda}^c$ and $\Delta >0$ such that $x+\delta \notin D_{\lambda}$ for all $\delta \in [0,\Delta]$. Recalling the definition of the operator $\mathbb{L}$ from Equation (\ref{eq:LaplaceOperator}),

 \begin{equation*}
     f(x)d(x) -  \int_0^1 f(y)w(x,y)dy = \lambda f(x).
 \end{equation*}
 Plugging $x$ and $x+\delta$ in the previous equation and taking the difference yields, for $\delta \in [0,\Delta]$ sufficiently small, 
\begin{align*}
    \frac{f(x+\delta)-f(x)}{\delta} &= \frac{1}{d(x+\delta)-\lambda}\left(\int_0^1(f(y)-f(x))\left(\frac{w(x+\delta,y)-w(x,y)}{\delta} \right)dy\right)\\
    &\xrightarrow[\delta \to 0]{}~\frac{1}{d(x)-\lambda}\left(\int_0^1 (f(y)-f(x))\frac{\partial w(x,y)}{\partial x}\right)dy.
\end{align*}
For the integral convergence, we apply the dominated convergence theorem. This is justified by the fact that, by Assumption~\ref{assumption:PartialDerivativeGraphon}, the partial derivative exists everywhere, and by Assumption ~\ref{assumption:lipschitz}, we have that $(w(x+\delta,y)-w(x,y))/\delta$ is uniformly bounded by $K < \infty$. Therefore $f$ is differentiable in $D_{\lambda}^c$ and for all $x\in D_{\lambda}^c$
\begin{equation}\label{eq:EigenfunctionDerivative}
    f'(x) = \frac{1}{d(x)-\lambda}\left(\int_0^1 (f(y)-f(x))\frac{\partial w(x,y)}{\partial x}\right)dy.
\end{equation}
By a similar argument we can show that $f'(x+\delta)\to f'(x)$ as $\delta\to 0$, consequently $f$ is $\mathcal{C}^1$ on $D_{\lambda}^c$. 

\end{proof}

Now we introduce the two theorems whose proofs will take the rest of the section: 

\begin{thm}\label{thm:monotone}
There exists a Fiedler function $\phi\in\mathcal{C}^1([0,1])$ and $L > 0$ such that $\inf_{x \in [0,1]} \phi'(x) = L$. 
\end{thm}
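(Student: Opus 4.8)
The plan is to produce a Fiedler function not by analyzing an arbitrary eigenfunction directly, but by taking the \emph{monotone rearrangement} of one via the variational characterization of $\lambda_2$, and then to upgrade ``monotone'' to ``derivative bounded below'' using the smoothness from Lemma~\ref{thm:SmoothEigenfunction} together with the explicit derivative formula~\eqref{eq:EigenfunctionDerivative}. First, recall that Assumption~\ref{assumption:InfimumInequality} and the min--max principle put $\lambda_2$ strictly below $\inf_x d(x)=\inf\sigma_{\mathrm{ess}}(\mathbb{L})$, so $\lambda_2\notin rg(d)$ and, by Lemma~\ref{thm:SmoothEigenfunction}, \emph{every} Fiedler function is $\mathcal{C}^1$ on $[0,1]$ and satisfies~\eqref{eq:EigenfunctionDerivative} with $\lambda=\lambda_2$. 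Since $\lambda_2$ lies below the essential spectrum, the same principle gives $\lambda_2=\min_{f\in\mathcal{H}}\langle\mathbb{L}f,f\rangle=\min_{f\in\mathcal{H}}\tfrac12\int_0^1\int_0^1 w(x,y)(f(x)-f(y))^2\,dx\,dy$, and the minimizers over $\mathcal{H}$ are exactly the (normalized) Fiedler functions.

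\emph{The crux.} Let $\phi_0\in\mathcal{H}$ be a minimizer and let $\phi$ be its non-decreasing rearrangement on $[0,1]$; rearranging preserves $\int\phi_0=0$ and $\int\phi_0^2=1$, so $\phi\in\mathcal{H}$. Writing $E(f)=\tfrac12\int\int w(x,y)(f(x)-f(y))^2\,dx\,dy$, I would establish the rearrangement inequality $E(\phi)\le E(\phi_0)$ as follows. Via the layer-cake identity $(f(x)-f(y))^2=\int_{\mathbb{R}}\int_{\mathbb{R}} h_s(x,y)h_t(x,y)\,ds\,dt$ with $h_s(x,y)=\left|\mathbf{1}\{f(x)>s\}-\mathbf{1}\{f(y)>s\}\right|$, and writing $A_s=\{f>s\}$, one gets $E(f)=c\iint_{s<t}\big(\int_{A_t}\int_{A_s^c}w(x,y)\,dx\,dy\big)\,ds\,dt$ for a positive constant $c$ (here $A_t\subseteq A_s$ when $s<t$, so $A_t$ and $A_s^c$ are disjoint). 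Since $\phi$ is non-decreasing, each $\{\phi>s\}$ is a right-interval $(1-|A_s|,1]$, so $E(\phi)\le E(\phi_0)$ reduces to the two-set comparison
\begin{equation*}
\int_{B}\int_{D}w(x,y)\,dx\,dy\;\ge\;\int_{(1-|B|,\,1]}\int_{[0,\,|D|]}w(x,y)\,dx\,dy
\end{equation*}
for all disjoint measurable $B,D\subseteq[0,1]$: among disjoint pairs of prescribed measures, the ``maximally separated'' configuration minimizes the $w$-mass. This is precisely where the Robinsonian structure enters --- $w$ concentrates near the diagonal --- and I would prove it by a bathtub/exchange argument, pushing $D$-mass leftward and $B$-mass rightward and checking that neither step increases $\int_B\int_D w$. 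Granting this, $\phi$ is also a minimizer, hence a Fiedler function, hence $\mathcal{C}^1$ on $[0,1]$.

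\emph{From monotone to strictly increasing.} Apply~\eqref{eq:EigenfunctionDerivative} to $\phi$. For fixed $x$ and $y>x$ one has $\phi(y)-\phi(x)\ge 0$ and (Robinsonian) $\partial w(x,y)/\partial x\ge 0$; for $y<x$ both signs flip; hence the integrand is $\ge 0$ for a.e.\ $y$ and $\phi'(x)\ge 0$ (using $d(x)-\lambda_2>0$). If $\phi'(x_0)=0$ for some $x_0$, the integrand in~\eqref{eq:EigenfunctionDerivative} vanishes for a.e.\ $y$; since $\partial w(x_0,\cdot)/\partial x\neq 0$ a.e.\ (Assumption~\ref{assumption:PartialDerivativeGraphon}), this forces $\phi(y)=\phi(x_0)$ on a set of positive measure, hence (monotonicity and continuity) $\phi$ is constant on an interval, and iterating at interior points of that interval forces $\phi$ constant on $[0,1]$, contradicting $\int\phi=0$, $\int\phi^2=1$. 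Thus $\phi'>0$ on $[0,1]$, and since $\phi'$ is continuous on the compact set $[0,1]$, $L:=\inf_{x}\phi'(x)=\min_x\phi'(x)>0$. (By the reversal convention we simply take the increasing, rather than the decreasing, rearrangement; note this argument is independent of the simplicity statement of Theorem~\ref{thm:SimpleFiedlerValue}.)

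The main obstacle is the two-set rearrangement inequality above: it is geometrically evident but making the exchange argument fully rigorous --- controlling the interleaving of $B$, $D$, and the ``unused'' part of $[0,1]$ --- takes real work. A morally equivalent alternative, should the direct rearrangement route prove awkward, is to show that every super- and sub-level set of a Fiedler function is an interval (so the Fiedler function is monotone), which again reduces to a comparison estimate exploiting that $w$ is largest near the diagonal; either way the Robinsonian hypothesis is what drives the monotonicity, while Assumptions~\ref{assumption:PartialDerivativeGraphon} and~\ref{assumption:InfimumInequality} are used only to obtain smoothness and the strict inequality $\lambda_2<\inf_x d(x)$.
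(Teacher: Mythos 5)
Your closing step (from a monotone $\mathcal{C}^1$ Fiedler function to $\inf_x\phi'(x)>0$ via Equation~\eqref{eq:EigenfunctionDerivative}, Assumption~\ref{assumption:PartialDerivativeGraphon}, and compactness of $[0,1]$) is sound, and it is essentially the same strictness argument the paper uses. The gap is exactly at the point you call the crux: the two-set inequality $\int_B\int_D w \ge \int_{(1-|B|,1]}\int_{[0,|D|]}w$ for all disjoint measurable $B,D$ is false for Robinsonian graphons, so no bathtub/exchange argument can prove it. Concretely, take the two-block graphon with $w(x,y)=1$ when $x,y$ are both in $[0,0.9]$ or both in $(0.9,1]$, and $w(x,y)=\epsilon$ otherwise; this is Robinsonian. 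With $B=[0,0.3]$ and $D=(0.9,1]$ (disjoint) one gets $\int_B\int_D w = 0.03\,\epsilon$, whereas the ``maximally separated'' configuration gives $\int_{(0.7,1]}\int_{[0,0.1]}w = 0.02+0.01\,\epsilon$, so the claimed inequality fails by a fixed amount once $\epsilon$ is small. Smoothing the block boundary and adding a small strictly decreasing component yields a graphon satisfying Assumptions~\ref{assumption:lipschitz}--\ref{assumption:InfimumInequality} for which the violation persists, so the standing hypotheses do not rescue the lemma. The root cause is that your claim implicitly assumes a flip symmetry $x\mapsto 1-x$ (valid for $w=R(|x-y|)$, not for general Robinsonian $w$): when $|B|\neq|D|$ it matters which set is pushed to which end, and for a single asymmetric graphon different level pairs $(A_t,A_s^c)$ can prefer opposite orientations, so the per-level reduction with one fixed rearrangement orientation cannot work. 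Since the level sets of the unknown minimizer $\phi_0$ are a priori arbitrary disjoint pairs, the route as described is blocked, and the same objection applies to your fallback plan of showing each super/sub-level set is an interval by a pointwise comparison with the diagonal.

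For contrast, the paper never proves any rearrangement inequality: it differentiates the eigenvalue equation to pass to the operator $\mathbb{M}$ acting on $f'$ (Lemma~\ref{lemma:operatorM}), observes that $\alpha\mathbb{I}-\mathbb{M}$ preserves the cone of nonnegative functions precisely because $w$ is Robinsonian, and applies a Krein--Rutman theorem for non-compact operators (Theorem 1 of \cite{edmunds1972non}), with Assumption~\ref{assumption:InfimumInequality} ensuring the relevant eigenvalue $\alpha-\lambda_2$ lies above the essential spectrum of $\mathbb{M}_\alpha$. This produces a nonnegative eigenfunction $f^*$ of $\mathbb{M}$ at level $\lambda_2$; strict positivity follows by the same almost-everywhere argument you use at the end, and $\phi(x)=\int_0^x f^*(t)\,dt$ is then the desired monotone Fiedler function. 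Unless you can establish monotonicity of some Fiedler function by a genuinely different mechanism, the key lemma in your proposal is false as stated and the proof has a real gap.
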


\begin{thm}\label{thm:SimpleFiedlerValue}
The eigenspace corresponding to eigenvalue $\lambda_{2}$ is one-dimensional.
\end{thm}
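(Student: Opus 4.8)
The plan is to show that any eigenfunction $\phi$ associated to $\lambda_2$ is forced to be strictly monotone, and then to exploit the fact that a one-dimensional argument prevents two linearly independent strictly monotone functions from being orthogonal. More precisely, I would first observe that, since $\lambda_2 < \inf_{x}d(x) = \inf(\sigma_{\mathrm{ess}}(\mathbb{L}))$ by Assumption \ref{assumption:InfimumInequality}, we have $\lambda_2 \notin rg(d)$, so Lemma \ref{thm:SmoothEigenfunction} gives that \emph{every} eigenfunction in the $\lambda_2$-eigenspace is $\mathcal{C}^1$ on all of $[0,1]$ and satisfies the integro-differential identity \eqref{eq:EigenfunctionDerivative}. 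The core structural input — which presumably is where the proof of Theorem \ref{thm:monotone} actually lives — is that any such Fiedler function has a derivative of constant sign (and in fact bounded away from $0$); this is exactly the conclusion of Theorem \ref{thm:monotone}, which I may assume. So every nonzero element of the $\lambda_2$-eigenspace is a strictly monotone $\mathcal{C}^1$ function on $[0,1]$.

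Now suppose for contradiction that $\dim\ker(\mathbb{L}-\lambda_2\mathbb{I}) \geq 2$. Pick two linearly independent eigenfunctions $\phi_1,\phi_2$; both are strictly monotone, hence injective. After replacing $\phi_i$ by $\mathrm{rev}$-type reflections if necessary, I may assume both are strictly increasing. Fix a point, say $x_0 = 1/2$ (or any interior point), and set $c = \phi_2(x_0)/\phi_1(x_0)$ if $\phi_1(x_0)\neq 0$, and consider $\psi := \phi_2 - c\phi_1$. Then $\psi$ is again an eigenfunction for $\lambda_2$, it is not identically zero (by linear independence), yet $\psi(x_0) = 0$. But $\psi$ must itself be strictly monotone by Theorem \ref{thm:monotone}, so it has exactly one zero — which is fine so far; the contradiction must come from a \emph{second} constraint. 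The natural second constraint is orthogonality to the constants: every $\lambda_2$-eigenfunction lies in the orthogonal complement of $e$, since $\lambda_1 = 0$ is simple with eigenfunction $e$ and $\mathbb{L}$ is self-adjoint. A strictly monotone function $\psi$ with $\int_0^1 \psi = 0$ must change sign, so it has a zero in the interior, say at $x_1$, and strict monotonicity forces $\psi < 0$ on $[0,x_1)$ and $\psi > 0$ on $(x_1,1]$. That by itself is still consistent. The actual mechanism: choose the constant $c$ so that $\psi = \phi_2 - c\phi_1$ has its unique zero at the \emph{same} point where $\phi_1$ has its zero — impossible to arrange in general — so instead I would argue via the ordering of zeros. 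Let $a$ be the unique zero of $\phi_1$ and $b$ the unique zero of $\phi_2$ (both exist in $(0,1)$ by the sign-change argument). If $a = b$, then near $a$ the ratio $\phi_2/\phi_1 \to \phi_2'(a)/\phi_1'(a)$ (L'Hôpital, using $\phi_i'(a)\neq 0$ from Theorem \ref{thm:monotone}); taking $c$ equal to this limit makes $\psi = \phi_2 - c\phi_1$ vanish to second order at $a$, i.e. $\psi(a) = \psi'(a) = 0$, contradicting $\inf \psi' \geq L > 0$ (or $\leq -L$). If $a \neq b$, say $a < b$, then on $(a,b)$ we have $\phi_1 > 0$ and $\phi_2 < 0$, so $\phi_1 - \phi_2 > 0$ on $(a,b)$, while $\int_0^1(\phi_1 - \phi_2) = 0$ forces $\phi_1 - \phi_2$ (a strictly monotone eigenfunction) to be negative somewhere; but strict monotonicity of $\phi_1 - \phi_2$ together with it being positive on a whole interval $(a,b)$ and also negative somewhere forces its unique zero to lie outside $(a,b)$, which upon tracking signs on $[0,a]$ and $[b,1]$ produces an inconsistency with monotonicity of each of $\phi_1,\phi_2$ individually. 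Assembling these cases yields the contradiction, so the eigenspace is one-dimensional.

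I expect the main obstacle to be organizing the sign-chasing cleanly rather than any deep analytic difficulty: once Theorem \ref{thm:monotone} is in hand (strict monotonicity \emph{with} a uniform derivative bound, applied to \emph{arbitrary} $\lambda_2$-eigenfunctions, not just a distinguished one), the argument is essentially a statement that the space of strictly monotone functions orthogonal to $e$ cannot contain two linearly independent elements — because their difference would be strictly monotone, orthogonal to $e$, and (for a suitably chosen linear combination) would vanish to order two at an interior point, contradicting the uniform lower bound on the derivative. The one subtlety to be careful about is that Theorem \ref{thm:monotone} as stated produces \emph{one} Fiedler function with $\inf \phi' = L > 0$; I need its proof to actually establish that \emph{every} $\mathcal{C}^1$ Fiedler function has derivative of one constant sign bounded away from zero. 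If the proof of Theorem \ref{thm:monotone} only yields one such $\phi$, then I would instead run the L'Hôpital/second-order-vanishing argument using \eqref{eq:EigenfunctionDerivative} directly: if $\psi$ is a $\lambda_2$-eigenfunction with $\psi(x_0) = \psi'(x_0) = 0$ at some $x_0$, then \eqref{eq:EigenfunctionDerivative} at $x_0$ reads $0 = \frac{1}{d(x_0)-\lambda_2}\int_0^1(\psi(y)-\psi(x_0))\partial_x w(x_0,y)\,dy$, and combined with the eigenvalue equation and Assumption \ref{assumption:PartialDerivativeGraphon} ($\partial_x w(x_0,\cdot)\neq 0$ a.e.) one pushes this to show $\psi \equiv 0$, giving the contradiction without needing monotonicity of $\psi$ itself.
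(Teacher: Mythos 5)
There is a genuine gap, and it sits exactly at the point you flagged as a ``subtlety.'' Your main argument needs the statement that \emph{every} nonzero element of the $\lambda_2$-eigenspace is strictly monotone with derivative bounded away from zero, but Theorem \ref{thm:monotone} only asserts the \emph{existence} of one such Fiedler function. You cannot simply assume the stronger version: under the contradiction hypothesis that the eigenspace is two-dimensional, the strengthened statement is automatically false (take $\psi_t = \varphi - t\phi$ and tune $t$ so that $\min_x \psi_t'(x) = 0$), so assuming it is essentially assuming the conclusion. Moreover, even granting it, your case $a \neq b$ does not close: two strictly increasing functions with zero mean and distinct zeros, whose difference is also strictly monotone with zero mean, produce no sign inconsistency (the difference is simply positive at both $a$ and $b$ and has its zero outside $[a,b]$), so the sign-chasing as described does not yield a contradiction.

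Your fallback is also not sound as stated. From $\psi(x_0)=\psi'(x_0)=0$ and Equation \eqref{eq:EigenfunctionDerivative} you only get the single scalar identity $\int_0^1 \psi(y)\,\frac{\partial w(x_0,y)}{\partial x}\,dy = 0$, and one vanishing integral does not force the integrand to vanish a.e.\ unless the integrand has a fixed sign; Assumption \ref{assumption:PartialDerivativeGraphon} alone cannot ``push this to $\psi \equiv 0$.'' This is precisely why the paper routes the argument through the derivative operator $\mathbb{M}$ of Lemma \ref{lemma:operatorM}: given the monotone $\phi$ from Theorem \ref{thm:monotone} and a hypothetical independent Fiedler function $\varphi$, it chooses $a$ minimizing $\varphi'/\phi'$ and sets $\psi = \phi'(a)\varphi - \varphi'(a)\phi$, so that $\psi' \geq 0$ with $\psi'(a)=0$; then $\psi'$ is a \emph{nonnegative} eigenfunction of $\mathbb{M}$ vanishing at a point, and the sign argument from Lemma \ref{lemma:PositiveEigenfunction} (Equation \eqref{EqOneBadPointImpliesZero}), which crucially exploits that $\int_a^y \psi'(z)\,dz$ and $\frac{\partial w(a,y)}{\partial x}$ have the same sign for every $y$, forces $\psi' \equiv 0$ and hence $\psi \equiv 0$, the desired contradiction. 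Your proposal is missing exactly this minimization/positivity construction; the intermediate-value idea sketched above (tuning $t$ so the derivative just touches zero) would be the way to repair your argument, and it reduces to the paper's proof.
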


To prove Theorem~\ref{thm:monotone}, we introduce the linear operator
 \begin{equation}\label{eq:operatorM}
     (\mathbb{M}f)(x) =  d(x)f(x)-\int_0^1\int_0^1 \left( f(z)\frac{\partial w(x,y)}{\partial x}\mathbf{1}_{\{x\leq z\leq y\}}\right)dzdy.
 \end{equation}

Note that similarly to $\mathbb{L}$, the operator $\mathbb{M}$ is a self-adjoint, positive and bounded operator. Moreover, it is the sum of a multiplication operator and a compact operator, which means that $\sigma_{\mathrm{ess}}(\mathbb{M}) = rg(d)$. We will show that its spectral data is closely related to that of $\mathbb{L}$:

\begin{lemma}\label{lemma:operatorM} 

The eigenvalues of  $\mathbb{M}$ are $\lambda_2 < \lambda_3 < \cdots$, that is the spectrum of $\mathbb{M}$ satisfies  $\sigma(\mathbb{M})=\sigma(\mathbb{L})\setminus\{0\}$. Moreover, for all $\lambda \neq 0$, $(\lambda,f)$ is an eigenvalue-eigenfunction pair of $\mathbb{L}$ if and only if there exists a function $g$ so that $f'(x) = g(x)$ for all $x\in D_{\lambda}^c$  and $(\lambda,g)$ is an eigenvalue-eigenfunction pair of $\mathbb{M}$. \footnote{We have seen already that $D_{\lambda}$ is countable so in fact $f'$ exists almost everywhere. This means that the values of $f'$ where they exist will uniquely determine the element of $L^{2}([0,1])$ that is an eigenvalue-eigenfunction pair of $\mathbb{M}$.}
\end{lemma}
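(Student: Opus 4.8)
The plan is to establish the claimed correspondence by \emph{differentiating the eigenfunction equation for $\mathbb{L}$} and recognizing the result as the eigenfunction equation for $\mathbb{M}$, then arguing the converse by integration. First, suppose $(\lambda, f)$ is an eigenvalue-eigenfunction pair of $\mathbb{L}$ with $\lambda \neq 0$. By Lemma \ref{thm:SmoothEigenfunction}, the set $D_{\lambda}$ is discrete and $f$ is $\mathcal{C}^1$ on $D_{\lambda}^c$, with the explicit formula \eqref{eq:EigenfunctionDerivative} for $f'$. The idea is to manipulate \eqref{eq:EigenfunctionDerivative} into the form $\mathbb{M} g = \lambda g$ where $g = f'$. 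The key observation is that, using the symmetry $\partial w(x,y)/\partial x$ integrates (in $y$) to $d'(x)$, one can rewrite the term $\int_0^1 f(x) \tfrac{\partial w(x,y)}{\partial x}\, dy = f(x) d'(x)$, and for the remaining term $\int_0^1 f(y)\tfrac{\partial w(x,y)}{\partial x}\, dy$ one writes $f(y) = f(x) + \int_x^y f'(z)\, dz$ (valid since $f$ is absolutely continuous, being $\mathcal{C}^1$ off a discrete set and continuous everywhere as an eigenfunction). Substituting $f'(z) = g(z)$ and applying Fubini to the double integral produces exactly the expression $\int_0^1 \int_0^1 g(z)\tfrac{\partial w(x,y)}{\partial x} \mathbf{1}_{\{x \le z \le y\}}\, dz\, dy$ appearing in \eqref{eq:operatorM} (after handling the sign/orientation when $y < x$ symmetrically). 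One then needs to check that $(d(x)-\lambda) f'(x)$, which appears after clearing denominators in \eqref{eq:EigenfunctionDerivative}, matches $d(x) g(x) - (\text{the double-integral term}) - \lambda g(x)$; this is where the term $d'(x) f(x)$ must cancel — and indeed the piece of the double integral with $z$ near the endpoints, or equivalently the $f(x) d'(x)$ contribution, must be tracked carefully. After verifying the identity pointwise on $D_\lambda^c$, one concludes $\mathbb{M} g = \lambda g$ as elements of $L^2$, since $D_\lambda$ is Lebesgue-null.

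For the converse, suppose $(\lambda, g)$ is an eigenvalue-eigenfunction pair of $\mathbb{M}$ with $\lambda \neq 0$. Define $f(x) := f(0) + \int_0^x g(t)\, dt$ for a constant $f(0)$ to be chosen; then $f' = g$ a.e. and $f$ is absolutely continuous. Running the computation above in reverse, $\mathbb{M} g = \lambda g$ rearranges to the statement that $f$ satisfies $(\mathbb{L} f)(x) = \lambda f(x) + c$ for some constant $c$ (the constant of integration arising because we recover $\mathbb{L} f$ only up to its "mean" behavior). One then fixes $f(0)$ — equivalently adds an appropriate multiple of the constant function $e$, which lies in $\ker \mathbb{L}$ — to kill $c$; concretely, integrating both sides of $(\mathbb{L}f)(x) = \lambda f(x) + c$ against $e$ and using $\langle \mathbb{L} f, e\rangle = \langle f, \mathbb{L} e\rangle = 0$ forces $\lambda \langle f, e\rangle + c = 0$, and since $\lambda \neq 0$ we can shift $f$ by a constant to make $\langle f, e \rangle = 0$, hence $c = 0$ and $\mathbb{L} f = \lambda f$. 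Finally, the spectral identity $\sigma(\mathbb{M}) = \sigma(\mathbb{L}) \setminus \{0\}$: the discrete parts match by the eigenvalue correspondence just established (noting the correspondence is a bijection on eigenvalues because $f \mapsto f'$ is injective on the relevant eigenspaces — if $f' = 0$ then $f \in \mathrm{Vect}(e)$, i.e. $\lambda = 0$, excluded), and the essential parts match because $\sigma_{\mathrm{ess}}(\mathbb{M}) = rg(d) = \sigma_{\mathrm{ess}}(\mathbb{L})$ as already noted after \eqref{eq:operatorM}; one also checks $0 \notin \sigma(\mathbb{M})$, which follows since $\mathbb{M}$ is positive and $\langle \mathbb{M} g, g\rangle = 0$ would force $g$ constant, but then $\mathbb{M} g = d \cdot g$ would require $g \equiv 0$ unless $d$ is constant — and if $d$ were constant then $0 = \inf_x d(x) \in rg(d) = \sigma_{\mathrm{ess}}(\mathbb{M})$ is handled by noting Assumption \ref{assumption:NonZerod'} rules out $d$ constant (or by directly checking $\langle \mathbb{M}g,g\rangle > 0$ for nonconstant $g$ via the quadratic-form expression).

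The main obstacle I anticipate is the \textbf{bookkeeping in the Fubini rearrangement and the correct treatment of the indicator $\mathbf{1}_{\{x \le z \le y\}}$ versus the case $y < x$}: the operator $\mathbb{M}$ as written in \eqref{eq:operatorM} only has the indicator for $x \le z \le y$, so one must verify that the contribution from $y < x$ in formula \eqref{eq:EigenfunctionDerivative}, after writing $f(y) - f(x) = -\int_y^x g(z)\, dz$ and using the symmetry of $w$ (equivalently, that $\partial w(x,y)/\partial x = -\partial w(y,x)/\partial y$-type relations do \emph{not} directly apply since both arguments differ), actually combines correctly — most likely the resolution is that $\int_0^1 (f(y)-f(x)) \tfrac{\partial w(x,y)}{\partial x}\, dy$ is split as $\int_0^1 \int_0^1 g(z)\,\tfrac{\partial w(x,y)}{\partial x}\,[\mathbf{1}_{\{x \le z \le y\}} - \mathbf{1}_{\{y \le z \le x\}}]\, dz\, dy$, and one must argue that the second indicator's contribution either vanishes or gets absorbed, perhaps using that $\mathbb{M}$ is only determined up to adding multiples of the projection onto $e$ as well, or that the stated form of $\mathbb{M}$ is the one obtained after a specific symmetrization. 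Pinning down this sign/orientation issue cleanly — and making sure the resulting $\mathbb{M}$ is genuinely self-adjoint as claimed — is the delicate part; the rest is routine dominated-convergence and integration-by-parts manipulation already modeled in the proof of Lemma \ref{thm:SmoothEigenfunction}.
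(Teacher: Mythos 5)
Your proposal follows essentially the same route as the paper's proof: differentiate the eigen-equation for $\mathbb{L}$ (using $d'(x)=\int_0^1 \tfrac{\partial w(x,y)}{\partial x}\,dy$ and $f(y)-f(x)=\int_x^y f'(z)\,dz$) to obtain $\mathbb{M}f'=\lambda f'$, integrate back for the converse, and combine the eigenvalue correspondence with $\sigma_{\mathrm{ess}}(\mathbb{M})=rg(d)=\sigma_{\mathrm{ess}}(\mathbb{L})$; your explicit handling of the constant of integration via $\langle \mathbb{L}f,e\rangle=0$ and $\lambda\neq 0$ is in fact more careful than the paper, which passes silently from the differentiated identity to the undifferentiated one. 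The ``delicate part'' you flag dissolves once the double integral in \eqref{eq:operatorM} is read as the oriented integral $\int_0^1\bigl(\int_x^y g(z)\,dz\bigr)\tfrac{\partial w(x,y)}{\partial x}\,dy$, i.e.\ exactly the signed combination $\mathbf{1}_{\{x\le z\le y\}}-\mathbf{1}_{\{y\le z\le x\}}$ you wrote down: nothing vanishes, no projection onto $e$ is involved, and this is precisely how the paper itself uses $\mathbb{M}$ in the proof of Lemma \ref{lemma:PositiveEigenfunction}. One small error in your side remark: for constant $g$ it is not true that $\mathbb{M}g=d\cdot g$ (the double-integral term does not vanish), so that route to $0\notin\sigma(\mathbb{M})$ does not work as stated; the clean argument (the paper's) is that $\mathbb{M}g=0$ yields, by your own converse computation with the integration constant killed automatically by $\langle\mathbb{L}f,e\rangle=0$, $\mathbb{L}f=0$ for $f=\int_0^{\cdot}g(t)\,dt$, whence $f\in\mathrm{Vect}(e)$ by simplicity of $\lambda_1=0$ and so $g=f'=0$. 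Finally, self-adjointness of $\mathbb{M}$ plays no role in the eigenvalue correspondence itself, so that worry need not be resolved for this lemma.
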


\begin{proof}
Let $(\lambda,f)$ be an eigenvalue-eigenfunction pair of $\mathbb{L}$ for which $\lambda\neq 0$. By Lemma ~\ref{thm:SmoothEigenfunction}, $f$ is differentiable at  all $x\in D_{\lambda}^c$, and by Assumption~\ref{assumption:PartialDerivativeGraphon}, the partial derivatives of $w$ exist almost everywhere. This justifies the calculation 
\begin{align*}
    (\mathbb{L}f)(x) = \lambda f(x) &\Rightarrow \lambda f(x) = f(x)d(x)-\int_0^1 f(y)w(x,y)dy\\
    &\Rightarrow \lambda f'(x) = f'(x)d(x)+f(x)d'(x)-\int_0^1 f(y)\frac{\partial w(x,y)}{\partial x} dy\\
    &\Rightarrow \lambda f'(x) = f'(x)d(x)-\int_0^1 (f(y)-f(x))\frac{\partial w(x,y)}{\partial x} dy\\
    &\Rightarrow \lambda f'(x) = f'(x)d(x)-\int_0^1\int_0^1 \left( f'(z)\frac{\partial w(x,y)}{\partial x}\mathbf{1}_{\{x\leq z\leq y\}}\right)dzdy\\
    &\Rightarrow \lambda f'(x) = (\mathbb{M}f')(x).\\
\end{align*}

Conversely, let $(\lambda,g)$ be an  eigenvalue-eigenfunction of $\mathbb{M}$. Then for all $x \notin D_{\lambda}$, straightforward algebra gives 
\begin{equation*}
    g(x)=\frac{1}{d(x)-\lambda}\int_0^1\int_0^1\left(\frac{\partial w(x,y)}{\partial x}g(z)\mathbf{1}_{\{x\leq z\leq y\}}\right)dzdy.
\end{equation*}

Applying the dominated convergence theorem (this can be used because the partial derivative of $w$ satisfies $\left|\frac{\partial w(x,y)}{\partial x} \right| \leq K$ when it exists, by Assumption ~\ref{assumption:lipschitz}), we can see that $g$ is continuous on $D_{\lambda}^c$.

Define $f(x)=\int_{0}^{x} g(t)dt$ for all $x\in[0,1]$. Then $f'(x)=g(x)$ for $x\in D_{\lambda}^c$, and furthermore
\begin{align*}
    (\mathbb{M}g)(x) = \lambda g(x) &\Rightarrow d(x)f'(x)+\int_0^1 (f(x)-f(y))\frac{\partial w(x,y)}{\partial x}dy = \lambda f'(x)\\
    &\Rightarrow \int_0^1 \left(\frac{\partial (f(x)-f(y))}{\partial x}w(x,y)+(f(x)-f(y))\frac{\partial w(x,y)}{\partial x}\right)dy = \lambda f'(x)\\
    &\Rightarrow \int_0^1 \frac{\partial }{\partial x}\left((f(x)-f(y))w(x,y)\right)dy = \lambda f'(x)\\
    &\Rightarrow \int_0^1 (f(x)-f(y))w(x,y)dy = \lambda f(x)\\
    &\Rightarrow (\mathbb{L}f)(x) = \lambda f(x)\\
\end{align*}
as required. 

Finally, we check that $0 \notin \sigma(\mathbb{M})$. To see this, consider a function $g$ such that $\mathbb{M}g=0$. Then by the above calculation $\mathbb{L}f=0$, where $f(x) = \int_{0}^{x} g(t) dt$. Recalling from the paragraph following \eqref{eq:PositiveLaplacian} that $\lambda_{1}$ is a simple eigenvalue of $\mathbb{L}$, this implies $f\in\mathrm{Vect}(e)$ and so $g=f'=0$. We conclude that $\sigma(\mathbb{M})=\sigma(\mathbb{L})\setminus\{0\}$.
\end{proof}

\begin{lemma}\label{lemma:PositiveEigenfunction}
There exists $f^*\in L^2([0,1])$ such that $\mathbb{M}f^* = \lambda_2 f^*$ and $f^*(x)>0$ for all $x\in[0,1]$.
\end{lemma}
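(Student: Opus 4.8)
The plan is to realize the Fiedler value $\lambda_2$ as the bottom of the spectrum of $\mathbb{M}$ (which by Lemma \ref{lemma:operatorM} equals $\inf\sigma(\mathbb{L})\setminus\{0\}=\lambda_2$, and by Assumption \ref{assumption:InfimumInequality} lies strictly below $\inf_x d(x)=\inf\sigma_{\mathrm{ess}}(\mathbb{M})$), and then apply a Perron--Frobenius / Krein--Rutman type argument to an associated positive operator to conclude that the corresponding eigenfunction can be taken strictly positive. Concretely, first I would write $\mathbb{M} = \mathbb{D} - \mathbb{K}$, where $\mathbb{D}$ is multiplication by $d(x)$ and $\mathbb{K}$ is the integral operator with kernel $k(x,z) = \int_0^1 \frac{\partial w(x,y)}{\partial x}\mathbf{1}_{\{x\le z\le y\}}\,dy = \int_z^1 \frac{\partial w(x,y)}{\partial x}\,dy$ for $z\ge x$ and $=\int_x^1(\cdots)dy$ — more carefully, $k(x,z) = \int_{\max(x,z)}^{1}\frac{\partial w(x,y)}{\partial x}dy$ for $z\ge x$ and $0$ otherwise, so actually $k(x,z)=\mathbf{1}_{\{z\ge x\}}\int_z^1 \partial_x w(x,y)\,dy$. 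The key sign observation is that because $w$ is Robinsonian, for $y > x$ we have $\partial_x w(x,y)\ge 0$ (moving $x$ toward $y$ increases $w$), so the kernel $k(x,z)$ is nonnegative; symmetry of $\mathbb{M}$ then forces $k$ to be (essentially) symmetric and nonnegative, i.e. $\mathbb{K}$ is a positivity-preserving self-adjoint compact operator.

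Next I would pass to the bottom of the spectrum. Since $\sigma_{\mathrm{ess}}(\mathbb{M}) = rg(d)$ and $\lambda_2 < \inf_x d(x)$, the value $\lambda_2$ is an isolated eigenvalue of finite multiplicity at the very bottom of $\sigma(\mathbb{M})$, characterized variationally by $\lambda_2 = \inf_{\|g\|=1}\langle \mathbb{M}g,g\rangle = \inf_{\|g\|=1}\big(\langle \mathbb{D}g,g\rangle - \langle \mathbb{K}g,g\rangle\big)$. The standard argument now is: if $g^\ast$ is a minimizer, then so is $|g^\ast|$, because $\langle \mathbb{D}|g^\ast|,|g^\ast|\rangle = \langle \mathbb{D}g^\ast,g^\ast\rangle$ (the diagonal part is unchanged by taking absolute values) while $\langle \mathbb{K}|g^\ast|,|g^\ast|\rangle = \int\int k(x,z)|g^\ast(x)||g^\ast(z)| \ge \big|\int\int k(x,z) g^\ast(x) g^\ast(z)\big| = \langle \mathbb{K}g^\ast, g^\ast\rangle$ since $k\ge 0$. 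Hence $|g^\ast|$ is also a ground state, so WLOG $g^\ast \ge 0$. To upgrade $g^\ast \ge 0$ to $g^\ast > 0$ everywhere, I would use the eigenvalue equation in the resolvent form from Lemma \ref{lemma:operatorM}, $g^\ast(x) = \frac{1}{d(x)-\lambda_2}\int_0^1 k(x,z) g^\ast(z)\,dz$ for $x\in D_{\lambda_2}^c$: the prefactor is strictly positive (as $d(x) > \lambda_2$), and I would argue that $\int_0^1 k(x,z)g^\ast(z)dz > 0$ for every $x$, using that $g^\ast$ is not identically zero together with the strict positivity of $\partial_x w$ guaranteed almost everywhere by Assumption \ref{assumption:PartialDerivativeGraphon}; a short irreducibility-type argument (the set where $g^\ast>0$ has positive measure, and $k(x,\cdot)$ is positive on a set overlapping it, or can be chained to overlap it) closes the gap. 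Finally, relabel $f^\ast := g^\ast$, or if one prefers the eigenfunction to literally be continuous and positive including on $D_{\lambda_2}$, note $D_{\lambda_2}$ is discrete by Lemma \ref{thm:SmoothEigenfunction} and continuity extends positivity.

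The main obstacle I expect is the strict-positivity (irreducibility) step: nonnegativity of $g^\ast$ follows cheaply from the variational/Perron argument, but promoting it to $g^\ast(x) > 0$ for \emph{all} $x$ requires showing the integral operator $\mathbb{K}$ "connects" all of $[0,1]$, i.e. that there is no invariant subinterval on which $g^\ast$ could vanish. This hinges on controlling the support of the kernel $k(x,z) = \mathbf{1}_{\{z\ge x\}}\int_z^1 \partial_x w(x,y)\,dy$ (and its symmetric counterpart): one must verify, using $\partial_x w \ne 0$ a.e. and the Robinsonian sign structure, that $k(x,z) > 0$ whenever $z > x$ (and symmetrically), which makes $\mathbb{K}$ strictly positivity-improving and rules out any nontrivial vanishing of the ground state. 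Care is also needed at the endpoints $x = 0, 1$, where $d(x)-\lambda_2$ is still positive by Assumption \ref{assumption:InfimumInequality} but the kernel $k(x,\cdot)$ is supported on a one-sided neighborhood; a direct check of the resolvent formula there handles it. Everything else — compactness of $\mathbb{K}$, self-adjointness, the essential-spectrum location, and the variational characterization of $\lambda_2$ — is already in place from the preceding discussion and from Lemma \ref{lemma:operatorM}.
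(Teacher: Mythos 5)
Your plan is a genuinely different route from the paper's: the paper does not use a variational/ground-state argument at all. It shifts to $\mathbb{M}_{\alpha}=\alpha\mathbb{I}-\mathbb{M}$ with $\alpha>1+\lambda_{\max}(\mathbb{M})$, checks by the same Robinsonian sign computation you invoke that $\mathbb{M}_{\alpha}$ maps the cone of nonnegative functions into itself, and then applies the Krein--Rutman theorem for non-compact cone-preserving operators (Theorem 1 of \cite{edmunds1972non}), Assumption \ref{assumption:InfimumInequality} entering only to guarantee $\lambda_{\max}(\mathbb{M}_{\alpha})=\alpha-\lambda_2>\sup\sigma_{\mathrm{ess}}(\mathbb{M}_{\alpha})$. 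Strict positivity is then a one-line contradiction: if $f^*(a)=0$, the eigenvalue equation at $a$ forces the pointwise-nonnegative integrand $\left(\int_a^y f^*(z)dz\right)\partial_x w(a,y)$ to vanish for a.e.\ $y$, and since $\partial_x w(a,\cdot)\neq 0$ a.e.\ this gives $f^*=0$, a contradiction. Crucially, none of this needs $\mathbb{M}$ to be self-adjoint, and no kernel-irreducibility analysis is required.

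Your proposal, by contrast, rests essentially on self-adjointness of $\mathbb{M}$: both the identity $\lambda_2=\inf_{\|g\|=1}\langle\mathbb{M}g,g\rangle$ and the step ``a minimizer of the quadratic form is an eigenfunction of $\mathbb{M}$, and $|g^*|$ is again a minimizer'' are facts about self-adjoint operators. But $\mathbb{M}$ is not self-adjoint (the paper's parenthetical remark notwithstanding, and its own proof never uses it). Writing $\mathbb{M}=\mathbb{D}-\mathbb{K}$ as the paper uses it (with the oriented inner integral $\int_x^y$), the kernel of $\mathbb{K}$ is $k(x,z)=\int_z^1\partial_x w(x,y)\,dy$ for $z\geq x$ and $k(x,z)=-\int_0^z\partial_x w(x,y)\,dy$ for $z<x$ --- your formula drops this second branch, which is also nonnegative --- and for $w(x,y)=R(|x-y|)$ one gets, for $z>x$, $k(x,z)=R(z-x)-R(1-x)$ while $k(z,x)=R(z-x)-R(z)$; these differ unless $z=1-x$, so $\mathbb{K}$ (hence $\mathbb{M}$) is not symmetric even for the ``nice'' graphons of Definition \ref{Assumption:weakfamily}. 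Consequently $\inf_{\|g\|=1}\langle\mathbb{M}g,g\rangle$ is the bottom of the numerical range, i.e.\ of the spectrum of $(\mathbb{M}+\mathbb{M}^*)/2$, which need not equal $\lambda_2$, and a minimizer (if attained) is an eigenfunction of the symmetrized operator rather than of $\mathbb{M}$; the chain from ``minimizer'' to ``nonnegative eigenfunction of $\mathbb{M}$ at $\lambda_2$'' breaks at its first link. Two smaller points: the strict-positivity upgrade, which you flag as the main expected obstacle and leave as an irreducibility sketch, is handled in the paper by the simple evaluation-at-a-zero argument above, with no need to show $k(x,z)>0$ off the diagonal; and your appeal to ``symmetry of $\mathbb{M}$ forces $k$ to be symmetric'' cannot be used to patch the kernel formula, since that symmetry fails. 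To rescue your approach you would either have to symmetrize $\mathbb{M}$ in a suitable weighted inner product (nontrivial, since $\mathbb{M}$ is conjugate to $\mathbb{L}$ only through the unbounded map $f\mapsto f'$) or replace the variational step by a cone-preservation/Krein--Rutman argument --- which is exactly the paper's proof.
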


\begin{proof}
 Fix $\alpha > 1+\lambda_{\max}(\mathbb{M})$.  Define the operator $\mathbb{M}_{\alpha} = \alpha \mathbb{I}-\mathbb{M}$ with $\mathbb{I}$ the identity operator. It is obvious that $\sigma(\mathbb{M}_{\alpha}) = \alpha - \sigma(\mathbb{M})$, hence $\lambda_{\max}(\mathbb{M}_{\alpha})>0$. 
 
 We next claim that  $\mathbb{M}_{\alpha}$ sends the cone of nonnegative functions,
$$
K=\left\{f\in L^2([0,1]) :  f(x)\geq 0 \ \text{for all} \ x\in[0,1]\right\},
$$
to itself. To see this, we examine the following expression for $f \in K$ and $x \in [0,1]$:
\begin{align*}
    (\mathbb{M}_{\alpha}f)(x) = (\alpha-d(x))f(x)+\int_0^1 \left(\int_x^y f(z)dz\right)\frac{\partial w(x,y)}{\partial x}dy.
\end{align*}
The first term $(\alpha-d(x))f(x)$ is non-negative since $\alpha>1\geq d(x)$. For the second term, if $y>x$ then $\int_x^y f(z)dz\geq 0$ and  $\frac{\partial w(x,y)}{\partial x}\geq 0$ (because $w$ is Robinsonian). Similarly, if $y<x$, both of these are negative and so their product is positive. Hence, in either case, $\left(\int_x^y f(z)dz\right)\frac{\partial w(x,y)}{\partial x}\geq 0$ for all $y$. Therefore $\mathbb{M}_{\alpha}f \in K$, as required.

Consequently $\mathbb{M}_{\alpha}$ is a positive, bounded and self-adjoint linear operator, with $\mathbb{M}_{\alpha} K \subseteq K$.  The essential spectrum of $\mathbb{M}_{\alpha}$ is $\sigma_{\mathrm{ess}}(\mathbb{M}_{\alpha})=\alpha - rg(d)$ (because $\sigma_{\mathrm{ess}}(\mathbb{M})=rg(d)$). Since $\lambda_{\max}(\mathbb{M}_{\alpha})$ is the largest eigenvalue of $\mathbb{M}_{\alpha}$ and $\sigma(\mathbb{M}_{\alpha})=\alpha-\sigma(\mathbb{M})$, additionally $\sigma(\mathbb{M}) = \sigma(\mathbb{L})\setminus \{0\}$ by Lemma~\ref{lemma:operatorM} and so  $\lambda_{\max}(\mathbb{M}_{\alpha}) = \alpha - \lambda_2$.

In \cite{edmunds1972non} the authors showed a version of the Krein-Rutman theorem for non-compact operators, which applies here. In fact according to Theorem 1 from \cite{edmunds1972non}, there exists $f^*\in K\setminus\{0\}$ such that $\mathbb{M}_{\alpha}f^* = \lambda_{\max}(\mathbb{M}_{\alpha})f^*$, note also that the conditions of this theorem are satisfied since $\lambda_{\max}(\mathbb{M}_{\alpha}) = \alpha - \lambda_2 > \alpha - \min_{x \in [0,1]}d(x) = \sup \sigma_{\mathrm{ess}}(\mathbb{M}_{\alpha})$ according to Assumption~\ref{assumption:InfimumInequality}. Note this is equivalent to $\mathbb{M} f^* = \lambda_2 f^*$ as required.

 We must now check that $f^{*}(x) > 0$ for all $x \in [0,1]$. Assume not; then there exists some $a \in [0,1]$ for which $f^*(a)=0$. We then have $(\mathbb{M}f^*)(a)=\lambda_2 f^*(a)=0$, which yields
\begin{equation} \label{EqOneBadPointImpliesZero}
    \int_0^1 \left(\int_a^y f^*(z)dz\right)\frac{\partial w(a,y)}{\partial x}dy=0.
\end{equation}
Thus, for almost all $y\in[0,1]$,
\begin{equation} \label{EqIntProd0}
    \left(\int_a^y f^*(z)dz\right)\frac{\partial w(a,y)}{\partial x}  = 0.
\end{equation}
By Assumption~\ref{assumption:PartialDerivativeGraphon}, $\frac{\partial w(a,y)}{\partial x} \neq 0$ almost everywhere, so Equation~\eqref{EqIntProd0} implies $\int_a^y f^*(z)dz =0$ a.e. This means that $f^*=0$ in $L^2([0,1])$, which contradicts the fact that $f^{*}$ is an eigenfunction of $\mathbb{M}$. We conclude that no such $a \in [0,1]$ exists, so $f^*(x)>0$ for all $x\in[0,1]$ as desired.
\end{proof}

\begin{corollary}\label{cor:notinrange}
The Fiedler value $\lambda_2\notin rg(d)$.
\end{corollary}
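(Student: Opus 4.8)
The plan is to derive a contradiction from assuming $\lambda_2 \in rg(d)$, using the positive eigenfunction $f^*$ of $\mathbb{M}$ produced by Lemma~\ref{lemma:PositiveEigenfunction} together with the formula~\eqref{eq:EigenfunctionDerivative} for the derivative of an eigenfunction of $\mathbb{L}$. First I would invoke Lemma~\ref{lemma:operatorM}: since $\lambda_2 \neq 0$, the pair $(\lambda_2, g)$ with $g = f^*$ is an eigenpair of $\mathbb{M}$ if and only if $(\lambda_2, f)$ is an eigenpair of $\mathbb{L}$, where $f(x) = \int_0^x f^*(t)\,dt$. Since $f^* > 0$ pointwise, $f$ is strictly increasing, and in particular $f$ is non-constant, so it is a genuine Fiedler function of $\mathbb{L}$ associated to $\lambda_2$.

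Next I would look at the formula~\eqref{eq:EigenfunctionDerivative} from Lemma~\ref{thm:SmoothEigenfunction}, which holds for $x \in D_{\lambda_2}^c$:
\begin{equation*}
    f'(x) = \frac{1}{d(x) - \lambda_2}\int_0^1 (f(y) - f(x))\frac{\partial w(x,y)}{\partial x}\,dy.
\end{equation*}
The key observation is that the numerator $\int_0^1 (f(y) - f(x))\frac{\partial w(x,y)}{\partial x}\,dy$ is strictly positive for every $x$: because $f$ is strictly increasing, $f(y) - f(x)$ has the same sign as $y - x$, and because $w$ is Robinsonian, $\frac{\partial w(x,y)}{\partial x}$ has the same sign as $y - x$ as well (it is $\geq 0$ for $y > x$ and $\leq 0$ for $y < x$); hence the integrand is $\geq 0$ everywhere, and it is strictly positive on a set of positive measure by Assumption~\ref{assumption:PartialDerivativeGraphon}. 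So the numerator equals $(d(x) - \lambda_2)\,f'(x)$ and is strictly positive, which forces $d(x) - \lambda_2 \neq 0$ and in fact $d(x) - \lambda_2 > 0$ for all $x \in D_{\lambda_2}^c$ (since $f' = (f^*)>0$ there). Thus $d(x) \neq \lambda_2$ for every $x \in D_{\lambda_2}^c$, which is automatic; the real content is that $d(x) > \lambda_2$ off $D_{\lambda_2}$.

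Now suppose toward a contradiction that $\lambda_2 \in rg(d)$, say $d(a) = \lambda_2$ for some $a \in [0,1]$. Evaluating the eigenfunction equation $(\mathbb{L}f)(a) = \lambda_2 f(a)$ directly gives $f(a)d(a) - \int_0^1 f(y) w(a,y)\,dy = \lambda_2 f(a)$, and since $d(a) = \lambda_2$ this collapses to $\int_0^1 f(y) w(a,y)\,dy = 0$; but $f$ is strictly increasing so $f$ is positive on a set of positive measure near $1$ and $w > 0$ everywhere (as noted in the footnote, Assumption~\ref{assumption:PartialDerivativeGraphon} forces $w(x,y) > 0$ everywhere), giving a contradiction unless $\int_0^1 f\,w(a,\cdot) $ has cancellation — so I would instead differentiate at $a$. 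Actually the cleanest route: by Lemma~\ref{thm:SmoothEigenfunction} the set $D_{\lambda_2}$ is discrete, so $a$ is isolated in $D_{\lambda_2}$ and $f$ is $\mathcal{C}^1$ on a punctured neighborhood of $a$; from $f'(x) = (d(x)-\lambda_2)^{-1}\cdot(\text{positive, bounded numerator})$ with numerator bounded below by a positive constant near $a$ (by continuity of the numerator, which is continuous since $f$ is continuous and $\partial w/\partial x$ is dominated), while $d(x) - \lambda_2 \to 0$ as $x \to a$, we get $f'(x) \to +\infty$ as $x \to a$. This contradicts the fact that $f' = f^*$ extends continuously (indeed $f^* \in L^2$ and, by the argument in Lemma~\ref{thm:SmoothEigenfunction}, $f$ is $\mathcal{C}^1$ away from $D_{\lambda_2}$, but blow-up of $f'$ at $a$ contradicts $f$ being a well-defined absolutely continuous function with $L^2$ derivative $f^*$ that is in fact bounded near $a$ since $f^*$ arises from Krein--Rutman and the numerator formula shows $f^*$ is continuous and finite). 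The main obstacle is making this blow-up argument airtight: I need the numerator to be bounded strictly away from zero in a neighborhood of $a$, which follows from its continuity (continuity of $f$, dominated convergence using $|\partial w/\partial x| \leq K$) together with its pointwise strict positivity established above, and I need to rule out the degenerate possibility that $d \equiv \lambda_2$ on an interval — but that is excluded by Assumption~\ref{assumption:NonZerod'}, which says $\{d' = 0\}$ is countable, so $d$ cannot be locally constant. Assembling these pieces yields $\lambda_2 \notin rg(d)$.
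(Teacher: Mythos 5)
Your overall strategy---combining the sign-positivity of the numerator $\int_0^1 (f(y)-f(x))\frac{\partial w(x,y)}{\partial x}\,dy$ with the hypothesis $d(a)=\lambda_2$---is in the right spirit, but the way you extract the contradiction has two unsupported steps. First, you need the numerator to be bounded below by a positive constant on a punctured neighbourhood of $a$, and you justify this ``by continuity of the numerator \dots since $f$ is continuous and $\partial w/\partial x$ is dominated.'' Domination only lets you pass to the limit if $\frac{\partial w(x,y)}{\partial x}\to\frac{\partial w(a,y)}{\partial x}$ for a.e.\ $y$ as $x\to a$; Assumptions \ref{assumption:lipschitz}--\ref{assumption:InfimumInequality} give existence a.e.\ of the partial derivative and the uniform bound $K$, but no continuity in $x$, so continuity of the numerator at $a$ is not available at this level of generality. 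Second, even granting the blow-up $f'(x)\to+\infty$ as $x\to a$, your contradiction rests on $f^*$ being ``bounded near $a$,'' which is never established: Krein--Rutman (Lemma \ref{lemma:PositiveEigenfunction}) produces an $L^2$ eigenfunction that is pointwise positive, and the quotient formula \eqref{eq:EigenfunctionDerivative} gives continuity of $f'=f^*$ only on $D_{\lambda_2}^c$, i.e.\ away from $a$; an $L^2$ function may perfectly well tend to $+\infty$ at an isolated point, so pointwise blow-up alone is not a contradiction. (The argument could be made quantitative: $d$ is Lipschitz by Assumption \ref{assumption:lipschitz}, so $|d(x)-\lambda_2|\leq K|x-a|$, and a uniform lower bound $c>0$ on the numerator near $a$ would force $f^*(x)\geq c/(K|x-a|)$ there, which is not square-integrable---but you did not make this step, and it still relies on the first, missing, ingredient.)

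The paper's proof avoids all limiting arguments by evaluating the eigen-equation of $\mathbb{M}$ at the point $a$ itself: if $d(a)=\lambda_2$, then in $(\mathbb{M}f^*)(a)=\lambda_2 f^*(a)$ the terms $d(a)f^*(a)$ and $\lambda_2 f^*(a)$ cancel exactly, leaving $\int_0^1\bigl(\int_a^y f^*(z)\,dz\bigr)\frac{\partial w(a,y)}{\partial x}\,dy=0$. Your own sign observation then finishes: the integrand is nonnegative (both factors have the sign of $y-a$, since $f^*>0$ and $w$ is Robinsonian), and by Assumption \ref{assumption:PartialDerivativeGraphon} it can vanish a.e.\ only if $\int_a^y f^*=0$ for a.e.\ $y$, i.e.\ $f^*=0$ in $L^2([0,1])$, contradicting $f^*(x)>0$. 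In other words, the half of your argument establishing strict positivity of the numerator is exactly what is needed; the missing idea is to obtain the complementary identity ``numerator $=0$ at $a$'' directly from the $\mathbb{M}$-equation at the single point $a$, rather than through a blow-up of $f'$ as $x\to a$, which your assumptions cannot control.
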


\begin{proof}
 Assume not, and let $\lambda_2 = d(a)$ for some $a$. Let $f^{*}$ be as in the statement of Lemma \ref{lemma:PositiveEigenfunction}. Since $\mathbb{M}f^*(a)= \lambda_2 f^*(a)$, we get 
 \begin{equation*}
    \int_0^1 \left(\int_a^y f^*(z)dz\right)\frac{\partial w(a,y)}{\partial x}dy=0.
\end{equation*}
Following the argument in the proof of Lemma \ref{lemma:PositiveEigenfunction} from Equation \eqref{EqOneBadPointImpliesZero}, this implies $f^*=0$ in $L^2([0,1])$. But we have already seen that $f^{*}(x) > 0$ for all $x \in [0,1]$, so this is a contradiction.
\end{proof}

We can now prove Theorem~\ref{thm:monotone}:

\begin{proof}[Proof of Theorem~\ref{thm:monotone}]
By Corollary~\ref{cor:notinrange},  $\lambda_2\notin rg(d)$, and so  by Lemma~\ref{thm:SmoothEigenfunction} all eigenfunctions of $\mathbb{L}$ associated with $\lambda_2$ are $\mathcal{C}^1([0,1])$. Let $f^*$ be as in the statement of Lemma~\ref{lemma:PositiveEigenfunction}, and define $\phi(x)=\int_0^x f^*(t)\mathrm{d}t$ for all $x\in[0,1]$. By Lemma~\ref{lemma:operatorM} we know that $\mathbb{L}\phi=\lambda_2\phi$, thus $\phi\in\mathcal{C}^1([0,1])$ is a Fiedler function with $\phi'(x)=f^*(x)>0$ for all $x\in [0,1]$.

Finally, since $\phi \in \mathcal{C}^{1}([0,1])$ and $\phi'(x) > 0$ for all $x \in [0,1]$, we have $\inf_{x \in [0,1]} \phi'(x) > 0$.
\end{proof}

Finally, we present the proof of Theorem~\ref{thm:SimpleFiedlerValue}:

\begin{proof}[Proof of Theorem~\ref{thm:SimpleFiedlerValue}]
Let $\phi$ as in the statement of Theorem~\ref{thm:monotone}. We prove Theorem~\ref{thm:SimpleFiedlerValue} by contradiction. If the eigenspace is not one-dimensional, there exists some other Fiedler function $\varphi \notin \mathrm{Vect}(\phi)$. Since $\lambda_2\notin rg(d)$ by Corollary \ref{cor:notinrange}, we have that $\varphi\in\mathcal{C}^1([0,1])$ as well. Let $a$ be such that 
\begin{equation*}
    \frac{\varphi'(a)}{\phi'(a)} = \min_{x\in[0,1]}\left(\frac{\varphi'(x)}{\phi'(x)}\right).
\end{equation*}
Put $\psi = \phi'(a)\varphi - \varphi'(a)\phi$, so that $\psi$ is an other Fiedler function and satisfies $\psi'(x)\geq 0$ for all $x\in[0,1]$ and $\psi'(a)=0$. But this implies  $\mathbb{M}\psi'(a)=\lambda_2 \psi'(a)=0$. Following the argument in the proof of Lemma \ref{lemma:PositiveEigenfunction} from Equation \eqref{EqOneBadPointImpliesZero}, this implies that $\psi=0$ in $L^2([0,1])$, which is a contradiction.
\end{proof}
\section{Convergence of the Fiedler eigenfunction} \label{SecConv}

We follow the conventions established at the start of Section \ref{SecFiedlerProp}, fixing a graphon $w$ satisfying Assumptions~\ref{assumption:lipschitz} to \ref{assumption:InfimumInequality} and defining the Laplacian $\mathbb{L}$ as in Equation \eqref{eq:LaplaceOperator}. We will also use the notation for random graphs and their associated objects from Section \ref{SecNotation}. 

In this section, we will check that the observed graph Laplacian converges to the limiting Laplacian $\mathbb{L}$ in a strong enough sense to recover spectral data about the Fiedler eigenfunction.

\subsection{Convergence of Laplacian operator}

The results in this section are very similar to the results in the paper \cite{avella2018centrality}. In this section we prove that two approximations to $\mathbb{L}$ converge quickly. But first, we set some notations. Define $d_{\mathrm{MAX}} := \max_{x\in[0,1]}d(x)$ and similarly $d_{\mathrm{MAX}}^{(N)} := \max_{i=1}^N\left(\sum_{j=1}^N P_{ij}^{(N)}\right)$.  Let $K$ be as in Assumption ~\ref{assumption:lipschitz} and let $\gamma > 1$. Define the integers $N_1(w, \tau, \varepsilon)$ and $N'_1(w, \tau, \gamma)$ as follows:
\begin{align}
    N_1(w, \tau, \varepsilon) &= \min\left\{ n \in \mathbb{N} \, : \, N \geq n \Rightarrow d_{\mathrm{MAX}} \geq  \frac{4\log\left(4N/\varepsilon\right)}{9\rho_{N} N}+\frac{2K}{N} \right\}, \label{eq:SmallestIntegers1}\\
    N'_1(w, \tau, \gamma) &= \min\left\{ n \in \mathbb{N} \, : \, N \geq n \Rightarrow d_{\mathrm{MAX}} \geq  \frac{4\log\left(4N/\varepsilon_{\gamma}(N)\right)}{9\rho_{N} N}+\frac{2K}{N} \right\}. \label{eq:SmallestIntegers2}
\end{align}
Recall that $\varepsilon_{\gamma}(N) = \exp{\left(-\log^{\gamma}(N)\right)}$. Note that both $N_1$ and $N'_1$ are well defined. In fact the term $\frac{4\log\left(2N/\varepsilon\right)}{9\rho_{N} N}$ goes to 0 as $N \to \infty$ and we also have
\begin{equation*}
    \frac{4\log\left(4N/\varepsilon_{\gamma}(N)\right)}{9\rho_{N} N} = \frac{4\left( \log(4N) + \log^{\gamma}(N)\right)}{9\rho_N N}
\end{equation*}
which goes to 0 as $N \to \infty$.

In order to prove the convergence of the Laplacian operator we need the following simple technical lemma:
\begin{lemma}\label{lemma:MaxDegree}
Fix $\varepsilon\in (0,1)$, $\gamma > 1$. Put $N_1 = N_1(w, \tau, \varepsilon)$ and $N'_1 = N'_1(w, \tau, \gamma)$. Then for all $N \geq N_1(w, \tau, \varepsilon)$ we have
 \begin{equation*}
    d_{\mathrm{MAX}}^{(N)} \geq  \frac{4\log\left(4N/\varepsilon\right)}{9\rho_N}.
 \end{equation*}
Similarly, for all $N \geq N'_1$ we have
 \begin{equation*}
    d_{\mathrm{MAX}}^{(N)} \geq  \frac{4\log\left(4N/\varepsilon_{\gamma}(N)\right)}{9\rho_N}.
 \end{equation*}
 \end{lemma}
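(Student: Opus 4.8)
The plan is to observe that $d_{\mathrm{MAX}}^{(N)}$ is actually a \emph{deterministic} quantity — it depends only on the model matrix $P^{(N)}$, not on the sampled graph $\sP^{(N)}$ — so the lemma is really a Riemann-sum estimate comparing the discrete quantity $\max_i \sum_{j=1}^N w(i/N,j/N)$ with $N$ times the continuum quantity $d_{\mathrm{MAX}} = \max_{x\in[0,1]} d(x)$, followed by an application of the definitions of $N_1$ and $N_1'$.

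First I would bound each row sum from below. Fix $i$. Writing $\sum_{j=1}^N w(i/N,j/N) = N\sum_{j=1}^N \int_{\iI_j^N} w(i/N,j/N)\,dy$ (since $|\iI_j^N| = 1/N$) and using Assumption~\ref{assumption:lipschitz} to compare $w(i/N,j/N)$ with $w(i/N,y)$ for $y\in \iI_j^N$, so that $|w(i/N,j/N) - w(i/N,y)| \leq K/N$, gives
\begin{equation*}
    \sum_{j=1}^N w(i/N,j/N) \;\geq\; N\int_0^1 w(i/N,y)\,dy - K \;=\; N\,d(i/N) - K.
\end{equation*}

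Second, I would compare $\max_{1\le i\le N} d(i/N)$ with $d_{\mathrm{MAX}}$. Since $d$ is $K$-Lipschitz (an immediate consequence of Assumption~\ref{assumption:lipschitz}) and every point of $[0,1]$ lies within $1/N$ of some grid point $i/N$, we have $\max_{1\le i\le N} d(i/N) \geq d_{\mathrm{MAX}} - K/N$. Taking the maximum over $i$ in the row bound and combining the two estimates,
\begin{equation*}
    d_{\mathrm{MAX}}^{(N)} \;\geq\; N\max_{1\le i\le N} d(i/N) - K \;\geq\; N\,d_{\mathrm{MAX}} - 2K .
\end{equation*}

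Finally, by the definition of $N_1 = N_1(w,\tau,\varepsilon)$ in Equation~\eqref{eq:SmallestIntegers1}, any $N\ge N_1$ satisfies $d_{\mathrm{MAX}} \geq \frac{4\log(4N/\varepsilon)}{9\rho_N N} + \frac{2K}{N}$, which is the same as $N\,d_{\mathrm{MAX}} - 2K \geq \frac{4\log(4N/\varepsilon)}{9\rho_N}$; substituting this into the previous display yields the first claim. The second claim is obtained by the identical argument, replacing $\varepsilon$ by $\varepsilon_{\gamma}(N)$ and $N_1$ by $N_1'$ from Equation~\eqref{eq:SmallestIntegers2}. I do not expect a real obstacle here; the only points to keep track of are that the two approximation errors (the Riemann-sum error and the grid-versus-continuum maximum error) each contribute a term of size $K$, and that the sparsity parameter $\rho_N$ never enters the approximation step itself — it has been absorbed entirely into the definitions of $N_1$ and $N_1'$.
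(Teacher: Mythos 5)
Your proposal is correct and follows essentially the same route as the paper: a Lipschitz/Riemann-sum comparison showing $d_{\mathrm{MAX}}^{(N)} \geq N\,d_{\mathrm{MAX}} - 2K$, followed by the definitions of $N_1$ and $N_1'$. The only difference is bookkeeping: you split the $2K$ error into a grid-point Riemann error plus a grid-versus-continuum maximum error, whereas the paper compares $d_N(x)$ with $d(x)$ at a general $x$ (perturbing both coordinates at once, giving $2K/N$ pointwise) and then multiplies by $N$ via $d_{\mathrm{MAX}}^{(N)} = N\max_x d_N(x)$.
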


\begin{proof}
For $N \geq N_1$, let $x\in \iI_i^N$ for some $i \in [N]$. Then 
\begin{align*}
    \abs{d(x)-d_N(x)} & = \abs{\int_0^1\left(w(x,y)-w_N(x,y)\right)dy}\\
    & \leq \sum_{j=1}^N \int_{\iI_j^N}\abs{w(x,y)-w(\frac{i}{N},\frac{j}{N})}dy\\
    &\leq \frac{2K}{N},
\end{align*}
where the inequality from line two to line three follows from Assumption~\ref{assumption:lipschitz}. Therefore $d_N(x)\geq d(x)-\frac{2K}{N}$, so
\begin{align*}
    \max_{x\in [0,1]}d_N(x)&\geq d_{\mathrm{MAX}}-\frac{2K}{N}&\\
    &\geq \frac{4\log(4N/\varepsilon)}{9\rho_N N}.
\end{align*}
On the other hand, 
\begin{align*}
    \max_{i=1}^N\left(\sum_{j=1}^NP_{ij}^{(N)}\right) &= N\max_{x\in[0,1]}\left(d_N(x)\right)\\
    &\geq \frac{4\log(4N/\varepsilon)}{9\rho_N}.
\end{align*}
As required. The proof of the second inequality for $N \geq N'_1(w, \tau, \gamma)$ is identical to the previous proof (replace $\varepsilon$ by $\varepsilon_{\gamma}(N)$).
\end{proof}

\begin{theorem}[Convergence of graphon Laplacian]\label{thm:ConvLaplace}
Let $K$ be as in Assumption~\ref{assumption:lipschitz}. Then
\begin{equation}
    \normoo{\mathbb{L}_N-\mathbb{L}} \leq \frac{4K}{N}. \label{ineq:LaplaceConv}
\end{equation}

Moreover, for all $\varepsilon\in(0,1)$ and all $N \geq N_1(w, \tau, \varepsilon)$ as in Equation~\eqref{eq:SmallestIntegers1},
\begin{equation}\label{ineq:SampleLaplaceConv}
    \Prob{ \normoo{\rho_N^{-1}\widehat{\mathbb{L}}_N-\mathbb{L}}\leq 4\sqrt{\frac{\log(4N/\varepsilon)}{\rho_N N}}+\frac{4K}{N} } \geq 1 - \varepsilon.
\end{equation}
Additionally, for all $\gamma>1$ the same inequality holds for all $N \geq N'_1(w, \tau, \gamma)$ if we replace $\varepsilon$ by $\varepsilon_{\gamma}(N)$, where $N'_1$ is defined in~\eqref{eq:SmallestIntegers2}.
\end{theorem}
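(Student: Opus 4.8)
The plan is to split each bound into a ``deterministic discretization error'' piece plus, in the random case, a ``concentration'' piece, and then assemble. For the deterministic bound \eqref{ineq:LaplaceConv}, I would write $\mathbb{L}_N - \mathbb{L}$ explicitly as an integral operator whose kernel-type data comes from the difference $w_N - w$. Recalling the two-term form of $\mathbb{L}$ in Equation \eqref{eq:LaplaceOperator}, for any $f$ with $\norm{f}=1$ and any $x \in \iI_i^N$ one gets
\begin{equation*}
    (\mathbb{L}_N f - \mathbb{L}f)(x) = f(x)\bigl(d_N(x) - d(x)\bigr) - \int_0^1 \bigl(w_N(x,y) - w(x,y)\bigr) f(y)\, dy.
\end{equation*}
By Assumption \ref{assumption:lipschitz}, $|w_N(x,y) - w(x,y)| \leq \tfrac{2K}{N}$ uniformly (this is exactly the estimate already carried out in the proof of Lemma \ref{lemma:MaxDegree}), so $|d_N(x) - d(x)| \leq \tfrac{2K}{N}$ and, by Cauchy--Schwarz, the integral term is bounded by $\tfrac{2K}{N}\norm{f} = \tfrac{2K}{N}$. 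Since $|f(x)| $ need not be pointwise small, I instead take $L^2$ norms: $\norm{f \cdot (d_N - d)} \leq \tfrac{2K}{N}\norm{f}$, and combining gives $\norm{\mathbb{L}_N f - \mathbb{L}f} \leq \tfrac{4K}{N}$, hence $\normoo{\mathbb{L}_N - \mathbb{L}} \leq \tfrac{4K}{N}$.

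For the random bound \eqref{ineq:SampleLaplaceConv}, I would use the triangle inequality $\normoo{\rho_N^{-1}\widehat{\mathbb{L}}_N - \mathbb{L}} \leq \normoo{\rho_N^{-1}\widehat{\mathbb{L}}_N - \mathbb{L}_N} + \normoo{\mathbb{L}_N - \mathbb{L}}$, with the second term already controlled by $\tfrac{4K}{N}$. The first term is a genuinely random, finite-rank object: $\rho_N^{-1}\widehat{\mathbb{L}}_N - \mathbb{L}_N$ acts on the $N$-dimensional space of step functions and its operator norm equals (up to the standard $L^2([0,1])$ vs.\ $\mathbb{R}^N$ rescaling by $1/N$ in both the inner product and the normalization, which cancels) the spectral norm of the random matrix $L_{\rho_N^{-1}\widehat P^{(N)}} - L_{P^{(N)}}$, i.e.\ the Laplacian of the centered, rescaled adjacency matrix $\rho_N^{-1}\widehat P^{(N)} - P^{(N)}$. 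This is where I would invoke a concentration inequality for the spectral norm of a random graph Laplacian — precisely the kind of bound established in \cite{avella2018centrality} (the reference the authors flagged as containing ``very similar'' results), which controls $\normoo{\rho_N^{-1}\widehat{\mathbb{L}}_N - \mathbb{L}_N}$ by roughly $\sqrt{d_{\mathrm{MAX}}^{(N)} \log(N/\varepsilon)}/(\rho_N N)$ with probability at least $1-\varepsilon$. Lemma \ref{lemma:MaxDegree} is the bridge: it guarantees $d_{\mathrm{MAX}}^{(N)} \geq \tfrac{4\log(4N/\varepsilon)}{9\rho_N}$ for $N \geq N_1$, which is exactly the lower bound on the max degree needed so that the concentration estimate simplifies to the clean form $4\sqrt{\log(4N/\varepsilon)/(\rho_N N)}$ appearing in the theorem (the factor $4/9$ and the $4$ are tuned for this). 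Adding the two contributions yields the stated inequality.

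The third claim — the same bound with $\varepsilon$ replaced by $\varepsilon_\gamma(N)$ for $N \geq N_1'$ — requires no new argument: every step above is uniform in the confidence parameter, so one simply re-runs it with $\varepsilon_\gamma(N) = \exp(-\log^\gamma N)$ in place of $\varepsilon$, using the $N_1'$ version of Lemma \ref{lemma:MaxDegree}. The main obstacle is the concentration step: one must either cite a matrix-Bernstein/Laplacian-concentration result in a form whose constants match the $\tfrac{4}{9}$ threshold and the leading $4$, or re-derive it (decompose the centered Laplacian into its diagonal degree-fluctuation part and its off-diagonal adjacency-fluctuation part, bound each by a matrix Bernstein inequality controlling the variance proxy by $d_{\mathrm{MAX}}^{(N)}$, and take a union bound). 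The deterministic part is routine; all the care lives in getting the probabilistic constant bookkeeping to line up with the definitions of $N_1, N_1'$.
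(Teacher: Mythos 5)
Your plan is correct and follows essentially the same route as the paper: the deterministic bound \eqref{ineq:LaplaceConv} is obtained there by a single Cauchy--Schwarz applied to the kernel $(f(x)-f(y))(w_N(x,y)-w(x,y))$ (your two-term split into $f\cdot(d_N-d)$ and the integral part yields the same $4K/N$), and the random bound \eqref{ineq:SampleLaplaceConv} is proved exactly as you outline --- triangle inequality, reduction to the matrix Laplacian, Theorem~1 of \cite{chung2011spectra} for $\norm{\widehat{P}^{(N)}-\rho_N P^{(N)}}$, a scalar Bernstein inequality with a union bound for $\norm{\widehat{D}^{(N)}-\rho_N D^{(N)}}$, and Lemma~\ref{lemma:MaxDegree} supplying the degree condition $\rho_N d_{\mathrm{MAX}}^{(N)}\geq \tfrac{4}{9}\log(4N/\varepsilon)$ that both licenses the concentration result and simplifies the Bernstein exponent. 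One bookkeeping correction for the write-up: the operator-to-matrix correspondence does not ``cancel'' --- the paper uses $\normoo{\rho_N^{-1}\widehat{\mathbb{L}}_N-\mathbb{L}_N}=\tfrac{\rho_N^{-1}}{N}\norm{\widehat{L}^{(N)}-\rho_N L^{(N)}}$, i.e.\ there is an explicit factor $1/N$ between the $L^2([0,1])$ operator norm and the matrix spectral norm (your rough final bound already carries the correct $1/(\rho_N N)$ scaling, so this is a prose fix, not a flaw in the plan).
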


\begin{proof}[Proof of Theorem~\ref{thm:ConvLaplace}] We prove the two inequalities \eqref{ineq:LaplaceConv} and \eqref{ineq:SampleLaplaceConv} in order.

\begin{proof}[Proof of Inequality~(\ref{ineq:LaplaceConv})]

 Let $f\in L^2([0,1])$ such that $\norm{f}=1$. Then
 \begin{align}
     \norm{\mathbb{L}_Nf-\mathbb{L}f}^2 &= \int_0^1\left(\mathbb{L}_Nf-\mathbb{L}f\right)^2(x)dx \nonumber \\ 
     &= \int_0^1\left(\int_0^1(f(x)-f(y))(w_N(x,y)-w(x,y))dy\right)^2dx \nonumber \\
     &\leq \int_0^1\left(\int_0^1(f(x)-f(y))^2dy\right)\left(\int_0^1(w_N(x,y)-w(x,y))^2dy\right)dx \nonumber \\
     &= \sum_{i=1}^N\int_{\iI_i^N}\left(\int_0^1(f(x)-f(y))^2dy\right)\left(\int_0^1(w_N(x,y)-w(x,y))^2dy\right)dx, \label{ineq:normconv}
 \end{align}
where from line two to three we used Cauchy-Schwartz inequality. For fixed $i \in [N]$ and $x\in \iI_i^N$,
\begin{align*}
    \int_0^1(w_N(x,y)-w(x,y))^2dy &= \sum_{j=1}^N\int_{\iI_j^N}(w(\frac{i}{N},\frac{j}{N})-w(x,y))^2dy\\
    & \leq \sum_{j=1}^N \int_{\iI_j^N} K^2 \left(\abs{\frac{i}{N}-x}+\abs{\frac{j}{N}-y}\right)^2dy\\
    & \leq \sum_{j=1}^N\int_{\iI_j^N} \frac{4K^2}{N^2}dy\\
    & = \frac{4K^2}{N^2}.
\end{align*}
Furthermore,
\begin{align*}
    \int_0^1(f(x)-f(y))^2dy & \leq \int_0^1 2(f(x)^2+f(y)^2)dy\\
    &= 2f(x)^2+2.
\end{align*}
Applying the last two calculations with Inequality~(\ref{ineq:normconv}),
\begin{align*}
     \norm{\mathbb{L}_{Nf}-\mathbb{L}f}^2 &\leq \frac{8K^2}{N^2}  \sum_{i=1}^N\int_{\iI_i^N}\left(f(x)^2+1\right)dx\\
     &=\frac{16K^2}{N^2}.
\end{align*}
This immediately implies ~(\ref{ineq:LaplaceConv}).
\end{proof}

\begin{proof}[Proof of Inequality~(\ref{ineq:SampleLaplaceConv})]

From the triangle inequality and Inequality~(\ref{ineq:LaplaceConv}),
\begin{equation}\label{ineq:SampledLaplaceBound}
    \normoo{\rho_N^{-1}\widehat{\mathbb{L}}_N-\mathbb{L}}\leq \normoo{\rho_N^{-1}\widehat{\mathbb{L}}_N-\mathbb{L}_N}+\normoo{\mathbb{L}_N-\mathbb{L}}\leq \normoo{\rho_N^{-1}\widehat{\mathbb{L}}_N-\mathbb{L}_N}+\frac{4K}{N}.
\end{equation}

Since both $\widehat{\mathbb{L}}_N$ and $\mathbb{L}_N$ have finite rank, it is not hard to see that 
\begin{align}
    \normoo{\rho_N^{-1}\widehat{\mathbb{L}}_N-\mathbb{L}_N} &= \frac{1}{N}\lambda_{\mathrm{max}}\left(\rho_N^{-1}\widehat{L}^{(N)}-L^{(N)}\right) \nonumber \\
    &= \frac{\rho_N^{-1}}{N}\norm{\widehat{L}^{(N)}-\rho_N L^{(N)}} \nonumber  \\
    & \leq \frac{\rho_N^{-1}}{N}\left(\norm{\widehat{D}^{(N)}-\rho_N D^{(N)}}+\norm{\widehat{P}^{(N)}-\rho_N P^{(N)}}\right)  \label{ineq:TriangIneq}.
\end{align}

Fix $N \geq N_1(w, \tau, \varepsilon)$ as in \eqref{eq:SmallestIntegers1}, then the maximum expected degree of the random graph $G^{(N)}$ is $\rho_{N} d_{\mathrm{MAX}}^{(N)}\geq \frac{4}{9}\log(4N/\varepsilon)$. Applying Theorem 1 of \cite{chung2011spectra}, 
\begin{equation}\label{ineq:BoundAdjacency}
    \Prob{\norm{\widehat{P}^{(N)}-\rho_N P^{(N)}} \leq \sqrt{4\rho_N d_{\mathrm{MAX}}^{(N)}\log(4N/\varepsilon)}} \geq 1 - \frac{\varepsilon}{2}.
\end{equation}
Now we give a similar bound for $\norm{\widehat{D}^{(N)}-\rho_N D^{(N)}}$. Let $\Delta^{(N)}=\widehat{D}^{(N)}-\rho_N D^{(N)}$ and $X^{(N)}=\widehat{P}^{(N)}-\rho_N P^{(N)}$. 

We note that
\begin{equation} \label{IneqSimpleIneqForBern}
    \sum_j \Var{X_{ij}}=\sum_j \rho_N P_{ij}^{(N)}(1-\rho_N P_{ij}^{(N)})\leq \rho_N \sum_j P_{ij}^{(N)}\leq \rho_N  d_{\mathrm{MAX}}^{(N)}.
\end{equation}

Then by Bernstein's inequality, for each $t>0$
\begin{align*}
\Prob{|\Delta_i^{(N)}|>t} &= \Prob{\abs{\sum_{j=1}^NX_{ij}^{(N)}}>t}\\
&\leq 2\exp{\left(\frac{-\frac{1}{2}t^2}{\sum_j \Var{X_{ij}}+\frac{1}{3}t}\right)}\\
&\stackrel{\text{Ineq. (}\ref{IneqSimpleIneqForBern}\text{)}}{\leq} 2\exp{\left(\frac{-\frac{1}{2}t^2}{\rho_N d_{\mathrm{MAX}}^{(N)} +\frac{1}{3}t}\right)}.
\end{align*}

Taking a union bound,

\begin{equation*}
    \Prob{\norm{\Delta^{(N)}}>t} \leq 2N\exp{\left(\frac{-\frac{1}{2}t^2}{\rho_N d_{\mathrm{MAX}}^{(N)} +\frac{1}{3}t}\right)}.
\end{equation*}
Choosing $t=\sqrt{4\rho_N d_{\mathrm{MAX}}^{(N)}\log\left(4N/\varepsilon\right)}$,

\begin{align*}
    \Prob{\norm{\Delta^{(N)}}>\sqrt{4\rho_N d_{\mathrm{MAX}}^{(N)}\log\left(4N/\varepsilon\right)}} & \leq 2N\exp{\left(-\frac{2\rho_N d_{\mathrm{MAX}}^{(N)}\log(4N/\varepsilon)}{\rho_N  d_{\mathrm{MAX}}^{(N)} + \frac{1}{3}\sqrt{4\rho_N d_{\mathrm{MAX}}^{(N)}\log\left(4N/\varepsilon\right)}}\right)}\\
    & \leq 2N \exp{\left(-\frac{2\rho_N d_{\mathrm{MAX}}^{(N)}\log(4N/\varepsilon)}{\rho_N d_{\mathrm{MAX}}^{(N)}+\rho_N d_{\mathrm{MAX}}^{(N)}}\right)}\\
    & = \frac{\varepsilon}{2}.
\end{align*}
Recalling that $\norm{\Delta^{(N)}} = \norm{\widehat{D}^{(N)}-\rho_N D^{(N)}}$ and $d_{\mathrm{MAX}}^{(N)}\leq N$, this implies
\begin{equation}\label{ineq:BoundDelta}
    \Prob{\norm{\widehat{D}^{(N)}-\rho_N D^{(N)}}  \leq \sqrt{4\rho_N N\log\left(4N/\varepsilon\right)}} \geq 1 - \frac{\varepsilon}{2}.
\end{equation}
 For $\varepsilon\in (0,1)$ we substitute~(\ref{ineq:BoundAdjacency}) and~(\ref{ineq:BoundDelta}) in~(\ref{ineq:TriangIneq}) to get 
\begin{equation}
    \Prob{\normoo{\rho_N^{-1}\widehat{\mathbb{L}}_N-\mathbb{L}_N}  \leq 4\sqrt{\frac{\log(4N/\varepsilon)}{\rho_N N}}} \geq 1 - \varepsilon.
\end{equation}

Finally, applying inequality~(\ref{ineq:SampledLaplaceBound}), we get that for all $\varepsilon\in(0,1)$ and all $N \geq N_1(w, \tau, \varepsilon)$ in the sense of~\eqref{eq:SmallestIntegers1}
\begin{equation*}
    \Prob{\normoo{\rho_N^{-1} \widehat{\mathbb{L}}_N-\mathbb{L}}\leq 4\sqrt{\frac{\log(4N/\varepsilon)}{\rho_N N}}+\frac{4K}{N}} \geq 1 - \varepsilon,
\end{equation*}
completing the proof. Again, replace in the $\varepsilon$ by $\varepsilon_{\gamma}(N)$ in the above and get the same result for all $N \geq N'_1(w, \tau, \gamma)$ in the sense of~\eqref{eq:SmallestIntegers2}.
\end{proof}
\end{proof}

\subsection{Convergence of the Fiedler function}

In order to prove the convergence of the Fiedler function we use a result by Davis and Kahan \cite{davis1970rotation} known as the $\sin(\theta)$-theorem for self-adjoint operators.

\begin{thm}[Davis-Kahan]\label{thm:davis-kahan}
Let $\mathbb{A}$ and $\widetilde{\mathbb{A}}$ be two self-adjoint operators with eigenvalues $\lambda_1 < \lambda_2 < \lambda_3 < \cdots$ and 
$\tilde{\lambda}_1 < \tilde{\lambda}_2 <  \tilde{\lambda}_3 <\cdots$ respectively. Consider eigenfunctions $f$ and $\tilde{f}$ of $\mathbb{A}$ and $\widetilde{\mathbb{A}}$ respectively with eigenvalues $\lambda_2$ and $\tilde{\lambda}_2$ respectively, and normalized so that $\norm{f}=\|\tilde{f}\|=1$ and $\langle f,\tilde{f} \rangle \geq 0$. If $0<\lambda_2<\lambda_3$ (i.e., $\lambda_2$ is isolated), then
\begin{equation*}
    \norm{\tilde{f}-f} \leq \frac{2\sqrt{2}\normoo{\widetilde{\mathbb{A}}-\mathbb{A}}}{\min\{\lambda_2 - \lambda_1,\lambda_3-\lambda_2\}}.
\end{equation*}
\end{thm}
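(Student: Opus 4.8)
The plan is to recognize this as the standard Davis--Kahan $\sin\theta$ bound for bounded self-adjoint operators and to prove it by a resolvent (Sylvester-equation) argument built on the spectral gap of $\mathbb{A}$ at $\lambda_2$; the reference \cite{davis1970rotation} can also be invoked directly. Write $\delta := \min\{\lambda_2-\lambda_1,\lambda_3-\lambda_2\}$, let $\mathbb{P}$ be the orthogonal projection of $L^2([0,1])$ onto $\mathrm{Vect}(f)$, set $\mathbb{P}^{\perp} := \mathbb{I}-\mathbb{P}$, and let $c := \langle f,\tilde f\rangle$, which lies in $[0,1]$ by the normalization and the hypothesis $\langle f,\tilde f\rangle\geq 0$. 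The first step is to reduce the claim to an estimate on $\|\mathbb{P}^{\perp}\tilde f\|$, the sine of the angle between $\mathrm{Vect}(f)$ and $\mathrm{Vect}(\tilde f)$: decomposing $\tilde f = cf+\mathbb{P}^{\perp}\tilde f$ gives $\|\mathbb{P}^{\perp}\tilde f\|^2 = 1-c^2$, and since $1+c\geq 1$ we get $\|\tilde f-f\|^2 = 2-2c = \frac{2(1-c^2)}{1+c}\leq 2(1-c^2) = 2\|\mathbb{P}^{\perp}\tilde f\|^2$. So it suffices to prove $\|\mathbb{P}^{\perp}\tilde f\|\leq \frac{2}{\delta}\normoo{\widetilde{\mathbb{A}}-\mathbb{A}}$. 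Moreover, from $\|\tilde f-f\|^2 = 2-2c\leq 2$ the asserted inequality is trivially true whenever $\normoo{\widetilde{\mathbb{A}}-\mathbb{A}}\geq \delta/2$, so I may assume $\normoo{\widetilde{\mathbb{A}}-\mathbb{A}}<\delta/2$ from here on.

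The second step locates the perturbed eigenvalue and identifies the gap. Since $\lambda_2$ is an isolated simple eigenvalue of the self-adjoint operator $\mathbb{A}$, the projection $\mathbb{P}$ equals the spectral projection $\mathbf{1}_{\{\lambda_2\}}(\mathbb{A})$ and commutes with $\mathbb{A}$; the compression $\mathbb{A}|_{\mathrm{ran}\,\mathbb{P}^{\perp}}$ is self-adjoint with spectrum $\sigma(\mathbb{A})\setminus\{\lambda_2\}$, and because $\lambda_1,\lambda_2,\lambda_3$ are consecutive points of $\sigma(\mathbb{A})$ with $\lambda_1$ its infimum and nothing else in $\sigma(\mathbb{A})$ below $\lambda_3$, its distance from $\lambda_2$ equals $\delta$. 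Weyl's perturbation inequality for the second eigenvalue gives $|\tilde\lambda_2-\lambda_2|\leq \normoo{\widetilde{\mathbb{A}}-\mathbb{A}}<\delta/2$, hence $\mathrm{dist}\!\big(\tilde\lambda_2,\,\sigma(\mathbb{A})\setminus\{\lambda_2\}\big)>\delta/2$; in particular $(\mathbb{A}-\tilde\lambda_2\mathbb{I})|_{\mathrm{ran}\,\mathbb{P}^{\perp}}$ is boundedly invertible with inverse of operator norm below $2/\delta$.

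The third step is the resolvent identity. Using $\mathbb{P}^{\perp}\mathbb{A} = \mathbb{A}\mathbb{P}^{\perp}$ and $\widetilde{\mathbb{A}}\tilde f = \tilde\lambda_2\tilde f$ one obtains $(\mathbb{A}-\tilde\lambda_2\mathbb{I})\mathbb{P}^{\perp}\tilde f = \mathbb{P}^{\perp}(\mathbb{A}-\widetilde{\mathbb{A}})\tilde f$; since the right-hand side lies in $\mathrm{ran}\,\mathbb{P}^{\perp}$, applying the bounded inverse from the previous step yields $\|\mathbb{P}^{\perp}\tilde f\|\leq \frac{2}{\delta}\,\|\mathbb{P}^{\perp}(\mathbb{A}-\widetilde{\mathbb{A}})\tilde f\|\leq \frac{2}{\delta}\normoo{\widetilde{\mathbb{A}}-\mathbb{A}}$. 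Combining with the reduction of the first paragraph gives $\|\tilde f-f\|\leq \sqrt 2\cdot\frac{2}{\delta}\normoo{\widetilde{\mathbb{A}}-\mathbb{A}} = \frac{2\sqrt 2\,\normoo{\widetilde{\mathbb{A}}-\mathbb{A}}}{\min\{\lambda_2-\lambda_1,\lambda_3-\lambda_2\}}$, as required.

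I expect the only real subtlety to be in the second step: one must pin $\mathrm{dist}(\lambda_2,\sigma(\mathbb{A})\setminus\{\lambda_2\})$ down to exactly $\delta$ rather than merely $\leq\delta$, which requires that no part of the (possibly essential) spectrum of $\mathbb{A}$ intrudes between $\lambda_1$ and $\lambda_2$ or between $\lambda_2$ and $\lambda_3$ — in the applications $\mathbb{A}=\mathbb{L}$ and this follows from $\lambda_2,\lambda_3$ lying below the essential spectrum $rg(d)$ — and one must use the correct variational form of Weyl's inequality for the ``second eigenvalue'' when the operators may carry essential spectrum (here $\widetilde{\mathbb{A}}=\rho_N^{-1}\widehat{\mathbb{L}}_N$ is finite rank, so this is automatic). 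The a priori dichotomy $\normoo{\widetilde{\mathbb{A}}-\mathbb{A}}\geq\delta/2$ versus $<\delta/2$ is also essential and not cosmetic: without the smallness one cannot exclude $\tilde\lambda_2\in\sigma(\mathbb{A})\setminus\{\lambda_2\}$, in which case the resolvent blows up. Everything else is routine.
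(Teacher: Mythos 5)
Your proof is correct, but note that the paper does not actually prove this statement: Theorem \ref{thm:davis-kahan} is quoted from Davis and Kahan \cite{davis1970rotation} as the $\sin(\theta)$ theorem and used as a black box. What you have written is a self-contained derivation of the quantitative variant the paper needs (essentially the standard resolvent argument behind the ``useful variant'' of Davis--Kahan): the reduction $\|\tilde f-f\|^2=2-2c\leq 2(1-c^2)=2\|\mathbb{P}^{\perp}\tilde f\|^2$ is right, the dichotomy on $\normoo{\widetilde{\mathbb{A}}-\mathbb{A}}$ versus $\delta/2$ is exactly what makes the Weyl localization of $\tilde\lambda_2$ usable, and the identity $(\mathbb{A}-\tilde\lambda_2\mathbb{I})\mathbb{P}^{\perp}\tilde f=\mathbb{P}^{\perp}(\mathbb{A}-\widetilde{\mathbb{A}})\tilde f$ together with $\|(\mathbb{A}-\tilde\lambda_2\mathbb{I})^{-1}|_{\mathrm{ran}\,\mathbb{P}^{\perp}}\|<2/\delta$ gives precisely the constant $2\sqrt2$. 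The caveats you flag are genuine implicit hypotheses of the statement rather than gaps in your argument: the bound is false without simplicity of $\lambda_2$ (take $\widetilde{\mathbb{A}}=\mathbb{A}$ and two orthogonal eigenvectors in a two-dimensional eigenspace), the denominator must really be $\mathrm{dist}(\lambda_2,\sigma(\mathbb{A})\setminus\{\lambda_2\})$, so no essential spectrum may intrude between $\lambda_1$ and $\lambda_3$, and Weyl's inequality must be read through the min-max values, which requires identifying $\tilde\lambda_2$ with the second min-max value of $\widetilde{\mathbb{A}}$ (automatic when the sampled graph is connected, so that $\tilde\lambda_1=0$ is simple). The paper implicitly makes the same reading --- it invokes Theorem \ref{thm:SimpleFiedlerValue} for simplicity when applying the bound in Theorem \ref{thm:FiedlerConv}, and if essential spectrum of $\mathbb{L}$ did begin below $\lambda_3$ the denominator there would have to be $\min\{\lambda_2,\inf rg(d)-\lambda_2\}$, which only changes the constant $C'(w)$. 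In short, your route buys a transparent, self-contained proof with explicit hypotheses, at the cost of making visible the fine print that the citation leaves implicit; the paper's route is shorter but delegates exactly these points to the reference.
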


Applying this to $\mathbb{L}$ and $\rho_N^{-1} \widehat{\mathbb{L}}_N$ immediately gives:

\begin{thm}\label{thm:FiedlerConv}
Let $\phi$ and $\hat{\phi}_N$ be the Fiedler functions of $\mathbb{L}$ and $\rho_N \widehat{\mathbb{L}}_N$ respectively, normalized so that $\|\phi\|=\|\hat{\phi}_N\|=1$ and $\langle \phi, \hat{\phi}_N\rangle\geq 0$. Fix $\eta < \frac{1-\tau}{3}$, then for all $\varepsilon \in (0,1)$ there exists a positive constant $C = C(w)$ and an integer $N_0 = N_0(w, \tau, \eta, \varepsilon)$ so that
\begin{equation*}
    \Prob{\norm{\hat{\phi}_N - \phi} \leq C \sqrt{N^{-3\eta}}} \geq 1 - \varepsilon
\end{equation*}
for all $N \geq N_{0}$. 

Moreover, for any $\gamma > 1$ there exists a positive constant $C' = C'(w)$ and an integer $N'_0 = N'_0(w, \tau, \eta, \gamma)$ so that 
\begin{equation*}
    \Prob{\norm{\hat{\phi}_N-\phi} \leq C'\sqrt{N^{-3\eta}}} \geq 1 - \varepsilon_{\gamma}(N)
\end{equation*}
for all $N \geq N'_0$.
\end{thm}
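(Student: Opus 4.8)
The plan is to combine the operator-norm convergence of Theorem~\ref{thm:ConvLaplace} with the Davis--Kahan $\sin\theta$ bound of Theorem~\ref{thm:davis-kahan}, applied to $\mathbb{A}=\mathbb{L}$ and $\widetilde{\mathbb{A}}=\rho_N^{-1}\widehat{\mathbb{L}}_N$. First I would note that the Fiedler function of an operator is unchanged under multiplication of that operator by a positive scalar, so $\hat{\phi}_N$ as in the statement is the unit-normalized eigenfunction of $\rho_N^{-1}\widehat{\mathbb{L}}_N$ associated with its second-smallest spectral value. Next, set $\delta_w := \min\{\lambda_2-\lambda_1,\ \lambda_3-\lambda_2\}$. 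By the remark following Assumption~\ref{assumption:InfimumInequality} we have $\lambda_2 < \inf_{x}d(x) = \inf\sigma_{\mathrm{ess}}(\mathbb{L})$, and by Theorem~\ref{thm:SimpleFiedlerValue} the eigenvalue $\lambda_2$ is simple, so $\lambda_3>\lambda_2$; hence $\delta_w$ is a strictly positive constant depending only on $w$, and in particular $\lambda_2$ is isolated, which is precisely the hypothesis required in Theorem~\ref{thm:davis-kahan}.

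The second step is to verify that the hypotheses of Davis--Kahan also hold for $\rho_N^{-1}\widehat{\mathbb{L}}_N$ on the high-probability event of Theorem~\ref{thm:ConvLaplace}. The essential spectrum of $\widehat{\mathbb{L}}_N$ equals $rg(\widehat{d}_N)$, the range of the empirical degree function, and the observed degrees concentrate around $d(\cdot)$; so for $N$ large, on the good event, $\inf\sigma_{\mathrm{ess}}(\rho_N^{-1}\widehat{\mathbb{L}}_N)$ stays above $\lambda_2+\delta_w/2$. Together with $\normoo{\rho_N^{-1}\widehat{\mathbb{L}}_N-\mathbb{L}}\to 0$ and a standard Weyl-type perturbation bound, this forces the second-smallest element of $\sigma(\rho_N^{-1}\widehat{\mathbb{L}}_N)$ to be a genuine isolated simple eigenvalue $\tilde{\lambda}_2$ with $\tilde{\lambda}_1=0<\tilde{\lambda}_2<\tilde{\lambda}_3$. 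Theorem~\ref{thm:davis-kahan} then yields
\[
\norm{\hat{\phi}_N-\phi}\ \leq\ \frac{2\sqrt{2}}{\delta_w}\,\normoo{\rho_N^{-1}\widehat{\mathbb{L}}_N-\mathbb{L}}.
\]

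It remains to plug in Theorem~\ref{thm:ConvLaplace}: on an event of probability at least $1-\varepsilon$ and for $N\geq N_1(w,\tau,\varepsilon)$ one has $\normoo{\rho_N^{-1}\widehat{\mathbb{L}}_N-\mathbb{L}}\leq 4\sqrt{\log(4N/\varepsilon)/(\rho_N N)}+4K/N$. Since $\rho_N=N^{-\tau}$ we have $(\rho_N N)^{-1}=N^{\tau-1}$, and the hypothesis $\eta<\tfrac{1-\tau}{3}$ gives $\tau-1+3\eta<0$, so $N^{\tau-1+3\eta}\log(4N/\varepsilon)\to 0$ and hence $\sqrt{\log(4N/\varepsilon)/(\rho_N N)}\leq\sqrt{N^{-3\eta}}$ once $N$ exceeds some $\varepsilon$-dependent threshold; likewise $\eta<1/3<2/3$ forces $N^{-1}\leq N^{-3\eta/2}=\sqrt{N^{-3\eta}}$ eventually. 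Thus $\normoo{\rho_N^{-1}\widehat{\mathbb{L}}_N-\mathbb{L}}\leq(4+4K)\sqrt{N^{-3\eta}}$, and taking $C:=2\sqrt{2}(4+4K)/\delta_w$ and $N_0$ the maximum of $N_1(w,\tau,\varepsilon)$ and the thresholds above proves the first assertion. For the second assertion I would rerun the argument verbatim with $\varepsilon$ replaced by $\varepsilon_\gamma(N)=\exp(-\log^\gamma N)$ and $N_1$ replaced by $N'_1(w,\tau,\gamma)$; then $\log(4N/\varepsilon_\gamma(N))=\log(4N)+\log^\gamma(N)$ is only poly-logarithmic, so $N^{\tau-1+3\eta}(\log(4N)+\log^\gamma N)\to 0$ still holds and the same bound goes through with a (possibly larger) constant $C'=C'(w)$.

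I expect the only genuinely delicate point to be the verification that the perturbed operator $\rho_N^{-1}\widehat{\mathbb{L}}_N$ really does have an isolated second eigenvalue, so that its Fiedler function is well-defined and Theorem~\ref{thm:davis-kahan} is applicable; everything else is bookkeeping with the two rate branches. This point is handled by the fixed gap $\delta_w>0$ coming from Theorems~\ref{thm:monotone}--\ref{thm:SimpleFiedlerValue} and Assumption~\ref{assumption:InfimumInequality}, together with the fact that $\sigma_{\mathrm{ess}}(\widehat{\mathbb{L}}_N)=rg(\widehat{d}_N)$ stays bounded away from $[0,\lambda_2]$ once the empirical degrees concentrate.
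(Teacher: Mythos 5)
Your argument is correct and follows essentially the same route as the paper's proof: combine the operator-norm bound of Theorem~\ref{thm:ConvLaplace} with the Davis--Kahan bound of Theorem~\ref{thm:davis-kahan}, using the gap $\min\{\lambda_2,\lambda_3-\lambda_2\}>0$ guaranteed by Theorem~\ref{thm:SimpleFiedlerValue} and Assumption~\ref{assumption:InfimumInequality}, then do the rate bookkeeping with $\rho_N=N^{-\tau}$ and $\eta<\tfrac{1-\tau}{3}$ and rerun with $\varepsilon_\gamma(N)$ for the second branch. Your extra verification that $\rho_N^{-1}\widehat{\mathbb{L}}_N$ has a well-separated second eigenvalue is more than the paper bothers to spell out (its Davis--Kahan statement only requires the gap for the limiting operator $\mathbb{L}$), but it is harmless and, if anything, makes the application slightly more careful.
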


\begin{proof}
Define $N_0 = N_0(w, \tau, \eta, \varepsilon)$ and $N'_0 = N'_0(w, \tau, \eta, \gamma)$ by 
\begin{align*}
    &N_0(w, \tau, \eta, \varepsilon) 
     = \min \left\{n \geq N_1(w, \tau, \varepsilon) \, : \,  N \geq n \Rightarrow 4\sqrt{\frac{\log(4N/\varepsilon)}{\rho_N N}}+\frac{4K}{N} \leq \sqrt{N^{-3\eta}} \right\} \\
    &N'_0(w, \tau, \eta, \gamma) = \min \left\{n \geq N'_1(w, \tau, \eta) \, : \,  N \geq n \Rightarrow 4\sqrt{\frac{\log(4N/\varepsilon_{\gamma}(N))}{\rho_N N}}+\frac{4K}{N} \leq \sqrt{N^{-3\eta}} \right\}
\end{align*}
where $N_1$ and $N'_1$ are defined in \eqref{eq:SmallestIntegers1}, \eqref{eq:SmallestIntegers2} respectively. Note that both $N_0$ and $N'_0$ are well defined since the left hand sides of the inequalities inside $N_0$ and $N'_0$ respectively are on the order of $\mathcal{O}(N^{-(1-\tau)/2}\log^{1/2}(N))$ and  $\mathcal{O}(N^{-(1-\tau)/2}\log^{\gamma/2}(N))$ respectively, which converges faster than the right hand sides of order $\mathcal{O}(N^{-3\eta/2})$ (recall that $\tfrac{3\eta}{2} < \tfrac{1-\tau}{2}$).

Take $N \geq N_0$, by Theorem~\ref{thm:ConvLaplace} we know that with a probability of at least  $1-\varepsilon$
\begin{align*}
    \normoo{\rho_N^{-1}\widehat{\mathbb{L}}_N-\mathbb{L}}
    & \leq 4\sqrt{\frac{\log(4N/\varepsilon)}{\rho_N N}}+\frac{4K}{N} \\
    & \leq \sqrt{N^{-3\eta}}.
\end{align*}
We apply the Davis-Kahan Theorem~\ref{thm:davis-kahan} to the Fiedler functions $\phi$ and $\hat{\phi}_N$ (note that the theorem applies here since under our assumptions $\lambda_2$ is simple by Theorem~\ref{thm:SimpleFiedlerValue}). We have with probability at least $1-\varepsilon$
\begin{align*}
     \norm{\hat{\phi}_N-\phi} 
     & \leq \frac{2\sqrt{2}\normoo{\rho_N^{-1}\widehat{\mathbb{L}}_N-\mathbb{L}}}{\min\{\lambda_2,\lambda_3-\lambda_2\}}\\
     & \leq \frac{2\sqrt{2}}{\min\{\lambda_2,\lambda_3-\lambda_2\}} \sqrt{N^{-3\eta}}
\end{align*}
 as required. We get the same result for the case $\varepsilon = \varepsilon_{\gamma}(N)$  and $N \geq N'_0$.
\end{proof}

\section{Error bound for the spectral seriation algorithm}\label{SecErrorBound}

In this section, we prove Theorem \ref{ThmMainThm}. The method we use to bound the errors of the permutation are very similar to the method used in the paper \cite{rocha2018recovering}. We follow the notation used in the statement of  Theorem \ref{ThmMainThm}, denote by $\phi$ the Fiedler eigenfunction of $\mathbb{L}$, and denote by $\hat{\phi}_N$ the Fiedler vector appearing in step 3 of Algorithm \ref{Alg:SpectralSeriation} with input $G^{(N)}$. By a small abuse of notation, for $i \in [N]$ we define $\hat{\phi}_i$ to be the $i$'th element of ${\phi}_N$. Similarly, for $x \in [0,1]$, we define 
\begin{equation*}
    \hat{\phi}_{N}(x) = \sum_{i = 1}^{N} \hat{\phi}_i 1_{\iI_i^N}(x).
\end{equation*}
It is straightforward to check that $\hat{\phi}_{N}$ is exactly the Fiedler vector of $\rho_N \widehat{\mathbb{L}}_N$. Note that we use the same notation $\hat{\phi}_{N}$ to refer to both the Fiedler vector associated with finite graph $G^{(N)}$ \textit{and} the Fiedler \textit{function} associated with the empirical graphon $\rho_N \widehat{\mathbb{L}}_N$. This will not cause any ambiguity in the following argument, as it is always clear if the argument for $\hat{\phi}_{N}$ is  a generic value $x \in [0,1]$ or a generic value $i \in \{1,2,\ldots,N\}$. We repeat notation in this way because the two objects take exactly the same values in the same order, and this allows us to avoid some repetition of essentially identical calculations.

Let $\sigma = \sigma_{\hat{\phi}_N}$ as in Equation~\eqref{Eq:DefFuncToPerm}. Note that $\sigma$ is the output of Algorithm~\ref{Alg:SpectralSeriation} with input $G^{(N)}$ and that we have $\hat{\phi}_{\sigma(1)}\leq \hat{\phi}_{\sigma(2)}\leq \cdots \leq \hat{\phi}_{\sigma(N)}$. Since $\phi$ is monotone and $\hat{\phi}_N$  is close to $\phi$ or $-\phi$ (by Theorem~\ref{thm:FiedlerConv}), we expect the permutation $\sigma$ to be close to the identity permutation (or its reverse).

For any permutation $\pi \in \mathcal{S}_N$, define the Kendall tau distance to be
\begin{equation*}
    D_N(\pi) = \sum_{i < j} \mathbf{1}_{\{\pi(i) > \pi(j)\}}.
\end{equation*}
In \cite{diaconis1977spearman}, it was shown that, for any $\pi \in \mathcal{S}_N$, we have $D_N(\pi) \leq \| \pi - id_N\|_1 \leq 2D_N(\pi)$. Thus, when $N \to \infty$, these distances are of the same order. 

We are now ready to prove Theorem~\ref{ThmMainThm}:
\begin{proof} [Proof of Theorem \ref{ThmMainThm}]
Fix any $0 < \eta <\frac{1-\tau}{3}$. We assume without loss of generality that $\norm{\hat{\phi}_N-\phi} \leq  \norm{\hat{\phi}_N + \phi}$ (if not, simply replace $\phi$ with $-\phi$ in the following). Define
\begin{align*}
    E_N &:= \left\{(i,j) \in [N]^{2} \, : \, i<j \text{ and } \sigma(j)<\sigma(i)\right\}\\
    E_N^{\eta} &:= \left\{(i,j) \in [N]^{2} \, : \, i+N^{1-\eta}<j \text{ and } \sigma(j)<\sigma(i)\right\},\\
\end{align*}
We then have 
\begin{align*}
    2\norm{\hat{\phi}_N-\phi}^2 &= \int_0^1(\hat{\phi}_N(x)-\phi(x))^2dx + \int_0^1 (\hat{\phi}_N(y)-\phi(y))^2dy\\
    &= \sum_{i=1}^{N}\int_{\iI_i^N}(\hat{\phi}_i-\phi(x))^2dx+\sum_{j=1}^{N}\int_{\iI_j^N}(\hat{\phi}_j-\phi(x))^2dx\\
    &= \frac{1}{N}\sum_{1\leq i,j \leq N}\left(\int_{\iI_i^N}(\hat{\phi}_i-\phi(x))^2dx + \int_{\iI_i^N}(\hat{\phi}_j-\phi(x+\frac{j-i}{N}))^2dx\right)\\
    &\geq \frac{1}{N}\sum_{(i,j)\in E_N^{\eta}} \int_{\iI_i^N}\left((\hat{\phi}_i-\phi(x))^2 + (\hat{\phi}_j-\phi(x+\frac{j-i}{N}))^2 \right)dx.
\end{align*}
Since $\phi(x+\frac{j-i}{N})>\phi(x)$ and $\hat{\phi}_j < \hat{\phi}_i$ for $(i,j) \in E_N$, we also have
\begin{equation*}
    (\hat{\phi}_i-\phi(x))^2 + (\hat{\phi}_j-\phi(x+\frac{j-i}{N}))^2 
    \geq
    \frac{1}{2}\left(\phi(x+\frac{j-i}{N})-\phi(x)\right)^{2}.
\end{equation*}
Combining the previous two estimates,
\begin{align*}
    2\norm{\hat{\phi}_N-\phi}^2 &\geq \frac{1}{2N} \sum_{(i,j)\in E_N^{\eta}} \int_{\iI_i^N} \left(\phi(x+\frac{j-i}{N})-\phi(x)\right)^2 dx\\
    &\stackrel{\text{Theorem } \ref{thm:monotone}}{\geq} \frac{L^2}{2N^3}  \sum_{(i,j)\in E_N^{\eta}} \int_{\iI_i^N} (j-i)^{2} dx\\
    &\geq \frac{L^2}{2N^{1+2\eta}} \sum_{(i,j)\in E_N^{\eta}} \int_{\iI_i^N} dx\\
    &= \frac{L^2}{2N^{2+2\eta}} \Card{E_N^{\eta}},
\end{align*}
where the inequality from  line two to three follows from the fact that $j>i+N^{1-\eta}$ for $(i,j)  \in E_N^{\eta}$. Consequently,
\begin{equation*}
    \Card{E_N^{\eta}} \leq \frac{4}{L^2}N^{2+2\eta}\norm{\hat{\phi}_N-\phi}^2.
\end{equation*}
On the other hand, 
\begin{equation*}
E_{N} \setminus E_N^{\eta}=\left\{(i,j) \in [N]^{2} : 0< j-i \leq  N^{1-\eta} \text{ and }  \sigma(j)<\sigma(i)\right\},
\end{equation*}
thus $\Card{E_{N}\setminus E_{N}^{\eta}}\leq N^{2-\eta}$. By Theorem~\ref{thm:FiedlerConv} there exists a positive constant $C = C(w)$ so that for all $N \geq N_0(w, \tau, \eta, \varepsilon)$ we have with probability at least $1-\varepsilon$
\begin{equation*}
     \norm{\hat{\phi}_N-\phi}^2 \leq C^2 N^{-3\eta}.
\end{equation*}
Recall that $D_N(\sigma) = \Card{E_N}$ and $\|\sigma - id_N\|_1 \leq \tfrac{1}{2} D_N(\sigma)$, therefore with probability at least $1 - \varepsilon$
\begin{align*}
    \| \sigma - id_N \|_1
    &\leq \frac{1}{2} \left( \Card{E_{N}\setminus E_{N}^{\eta}} + \Card{E_{N}^{\eta}} \right) \\
    &\leq  \frac{1}{2} N^{2-\eta} +  \frac{2}{L^2}N^{2+2\eta}\norm{\hat{\phi}_N-\phi}^2 \\
    &\leq (\frac{1}{2} + \frac{2C^2}{L^2})N^{2-\eta}
\end{align*}
as required. The same inequality holds for $\varepsilon = \varepsilon_{\gamma}(N)$ for all $N \geq N'_0(w, \tau, \eta, \gamma)$.
\end{proof}
\section{Postprocessing and Tighter Error Bounds} \label{SecPostProc}

The main goal of this section is to prove Theorem~\ref{ThmSomePostprocMain}. We introduce a post-processing algorithm of the estimate returned by Algorithm~\ref{Alg:SpectralSeriation}. This new algorithm provides an estimate that is more robust to noise, with a nearly optimal convergence rate.

\subsection{Notation}

We set notation that will be used throughout Section \ref{SecPostProc}. Let $G = (V, E)$ be a graph of size $N$. Denote by $V := \left\{v_1, v_2, \ldots, v_N\right\}$ the vertices of $G$, with $v_i = i/N$. For any $S \subset V$, denote the \textit{induced subgraph} on $S$ by $G|_{S}$. Define the (restricted) \textit{neighbourhood} of a vertex $v \in V$ by $N_{S}(v) = \{ u \in S \, : \, (u, v) \in E\}$. Denote by $\sigma_{S} \, : \, S \to [|S|]$ the function that sends $s \in S$ to $\Card{\{ s' \in S \, : \, s' \leq s \}}$; we think of this ordering as being essentially the ``identity'' permutation on $S$.

Algorithm \ref{AlgPostProc} involves partitioning up the vertices of $G$, estimating various quantities on parts of the partition, and then combining them. We say that a partition $(S_1, S_2, S_3)$ of $V$ is ``\textit{good}'' if $n \leq |S_3| \leq |S_2| \leq |S_1| \leq n+1$, where $n = \left \lfloor \frac{N}{3} \right \rfloor$. 

Let $(S_1, S_2, S_3)$ be a ``good'' partition of $V$ and let $\sigma_j : S_j \to [|S_j|]$ be orderings of each part of the partition. We define the ``\textit{merged ordering}'' of $(\sigma_1, \sigma_2, \sigma_3)$ as an ordering $\sigma : [N] \to [N]$ defined by: 
\begin{equation}\label{eq:merged-permutation}
    \sigma^{-1}(3k+j) = N \sigma_{j}^{-1}(k+1),
\end{equation}
for all  $k \in \{0, 1, \ldots, n\}$ and $j \in \{1,2,3\}$ such that $3k+j \leq N$. Note that Equation \eqref{eq:merged-permutation} does define a permutation, since for all $i \in [N]$ there is exactly one solution to the equation $i = 3k+j$ with $j \in \{1,2,3\}$ and $k \in \{0,1,\ldots,n\}$. Note that the factor $N$ in the definition is coming from the fact that the elements of $V$ are scaled by $v_i = i/N$.

Finally, for two orderings $\sigma_{1}, \sigma_{2}$ on sets $S_{1},S_{2}$, say that $\sigma_{1},\sigma_{2}$ \textit{are aligned} if 
\begin{equation} \label{EqAlignDef}
    \max_{i \in S{1} \cap S_{2}} \left|\frac{\sigma_{1}(i)}{|S_{1}|} - \frac{\sigma_{2}(i)}{|S_{2}|}\right| \leq   \max_{i \in S_{1} \cap S_{2}} \left|\frac{\mathrm{rev}(\sigma_{1})(i)}{|S_{1}|} - \frac{\sigma_{2}(i)}{|S_{2}|}\right|,
\end{equation}

and that $\sigma_1, \sigma_2$ \textit{not aligned} otherwise. If $S_{1} \cap S_{2} = \emptyset$, say that they are aligned. Moreover, if the left-hand side of \eqref{EqAlignDef} is less than 0.001 and the right-hand side is more than 0.999, say that $\sigma_{1},\sigma_{2}$ are \textit{closely aligned}. Recall that $ \mathrm{rev}(\sigma)$ denotes the reversal of any ordering $\sigma$ defined in Equation~\eqref{eq:ReversalOrdering}. \\


\subsection{Algorithms and Informal Analysis}

We are now ready to present our main algorithms.

\begin{algorithm}[h] \label{AlgPostProcSplit}
    \SetKwInOut{Input}{Input}
    \SetKwInOut{Output}{Output}

    \Input{ A graph $G = (V, E)$ of size $N$, a pair of disjoint sets $(T, S) \subset V^2$ and parameters $0 < \alpha < \beta < 1/2$.}
    
     Let $\hat{\sigma}_T$ be the result of running Algorithm \ref{Alg:SpectralSeriation} on $G|_{T}$. \\
    Given $\hat{\sigma}_T$, estimate the set of right-most and left-most vertices in $T$ by
     \begin{align}
     R = R_{\hat{\sigma}_T}(\alpha) &:= \left\{u \in T \, : \, \hat{\sigma}_{T} (u) \geq (1-\alpha) \, |T| \right\}, \label{eq:alpha-right-most}\\
     L = L_{\hat{\sigma}_T}(\alpha) &:= \left\{u \in T \, : \, \hat{\sigma}_{T} (u) \leq \alpha \, |T| \right\}. \label{eq:alpha-left-most}
 \end{align} \\
    For each $v \in S$, estimate the neighbourhood-size functions
    \begin{align}
    \psi_R(v) &:= \frac{1}{|T|}\,\Card{N_{R}(v)} = \frac{1}{|T|} \,\sum_{u \in R} \mathbf{1}_{(u, v) \in E},  \label{eq:psiR-estimate} \\ 
    \psi_L(v) &:= \frac{1}{|T|} \, \Card{N_{L}(v)} = \frac{1}{|T|} \,\sum_{u \in L} \mathbf{1}_{(u, v) \in E}. \label{eq:psiL-estimate}
\end{align}
 \\
    Compute the following empirical quantiles of $\psi_{L}, \psi_{R}$:
    \begin{align} 
    c_R &:= \inf \left\{ c \geq 0 \, : \, \Card{\left\{v \in S \, : \, \psi_{R}(v) \leq c \right\}} \geq (1-\beta) |S| \right\}, \label{eq:CbetaR} \\
    c_L &:= \inf \left\{ c \geq 0 \, : \, \Card{\left\{v \in S \, : \, \psi_{L}(v) \leq c \right\}} \geq (1-\beta) |S| \right\}. \label{eq:CbetaL}
\end{align} \\

    Compute the antisymmetric function $\hat{F} \, : \, S^{2} \mapsto \{-1, 0, 1\}$ by the following rule for $u < v$:  
    \begin{equation}  \label{EqDefAntisymmetric1}
\hat{F}(u,v) =
\left\{
	\begin{array}{ll}
		1 - 2 \cdot \mathbf{1}_{\psi_{R}(u) > \psi_{R}(v)}  & \mbox{if } \psi_{R}(u), \psi_{R}(v) < c_{R}  \mbox{; otherwise,} \\ 
		1 - 2 \cdot \mathbf{1}_{\psi_{L}(u) < \psi_{L}(v)}  & \mbox{if } \psi_{L}(u), \psi_{L}(v) < c_{L}  \mbox{; otherwise,} \\ 
		 1  & \mbox{if } \mbox{$\psi_{R}(u) < c_{R}$} \mbox{; otherwise,} \\ 
		 -1 &\mbox{} \\ 
	\end{array}
\right\}.
    \end{equation}
 \\
 
 For each $v \in S$ compute
    \begin{equation}\label{eq:f-hat}
    \hat{f}(v) = \sum_{u \in S} \hat{F}(u,v).
    \end{equation}
 \\
 Compute the ordering $\hat{\sigma}_S = \hat{\sigma}_{\hat{f}}$ on $S$ according to Equation (\ref{Eq:DefFuncToPerm}), breaking ties arbitrarily. \footnote{Note that the range of $\hat{\sigma}_S$ is $\{1,2,\ldots,|S|\},$ which is generally not equal to the set $S$ itself.}\\

    \Output{ The permutation $\hat{\sigma}_S$.}
    \caption{Initial Post-processing the output of spectral seriation.}
\end{algorithm}

We give an informal summary of what this algorithm is doing and why we expect it to work well, eliding many details.

Steps 1-2 get a good estimate of the right-most and left-most vertices in $T$, denoted $R$ and $L$ respectively. Using these estimated sets, steps 3-6 try to compare vertices $u,v \in S$ by comparing the number of neighbours these vertices have in $R$ and $L$. The Robinsonian property suggests the following heuristic: if $u < v$, then $u$ should have more neighbours in $R$ while $v$ should have more neighbours in $L$. If this heuristic were exactly correct, we would use only the first of four cases in the function $\hat{F}$ from Equation \eqref{EqDefAntisymmetric1}. However, the heuristic isn't quite correct: it can fail if (i) both of $u,v$ are very close to either $0$ or $1$ (in which case many of the ``right-most'' elements of $R$ are not actually to the right of $u,v$ or many of the ``left-most'' elements of $L$ are not actually to the left of $u,v$) or (ii) if  $|u-v|$ is too small (in which case sampling error may dominate). 

The four cases in Equation \eqref{EqDefAntisymmetric1} deal with the first of these problems by ensuring that we compare $u,v$ by counting either right- or left-most neighbours as appropriate. To give a back-of-the-envelope analysis of the sampling error that can cause the second problem, we note that there are $\Theta(N)$ possible edges from $u,v$ to $R$ or $L$, and the presence or absence of these edges are independent of the random variables used to estimate $R$ and $L$ (the purpose of partitioning $V$ is exactly to obtain this independence). This suggests that, as long as the error in the estimates of $R$ and $L$ are ``negligible'', the typical error in the estimates of $\psi_{R},\psi_{L}$ should be $\mathcal{O}(\sqrt{N})$. In particular, this is the step that lets us clean rough estimates of sets $R, L$ into much better estimates of the averages $\psi_{R},\psi_{L}$. Although this may not be obvious at first glance, the consistency result in Theorem \ref{ThmMainThm} is strong enough to guarantee that the error in the estimates of $R$, $L$ are ``negligible'' in the sense that this argument requires. This calculations suggests that $\hat{\sigma}_S$ should have a sup-norm error that is not too much larger than $\mathcal{O}(\sqrt{N})$, which is the case.

We now give our overall post-processing algorithm :

\begin{algorithm}[H] \label{AlgPostProc}
    \SetKwInOut{Input}{Input}
    \SetKwInOut{Output}{Output}

    \Input{ A graph $G = (V, E)$ of size $N$. Parameters $0 < \alpha < \beta < 1/2$.}
    Choose $(S_1, S_2, S_3)$ a ``good'' partition of $V$, uniformly at random; and independently choose another ``good'' partition $(S_1', S_2', S_3')$, uniformly at random. \\

    Use Algorithm \ref{AlgPostProcSplit} on pairs of subsets $(S_{3},S_{1})$, $(S_{1},S_{2})$ and $(S_{2},S_{3})$ with parameters $\alpha, \beta$ to obtain estimated orders $\hat{\sigma}_{1},\hat{\sigma}_{2},\hat{\sigma}_{3}$ on $S_{1},S_{2},S_{3}$ respectively. Also use Algorithm \ref{AlgPostProcSplit} on $(S_{1}',S_{2}')$ with parameters $\alpha, \beta$ to obtain estimated order $\hat{\sigma}_{\mathrm{ref}}$ on $S_{2}'$. \\
    For $j \in \{1,2,3\}$, replace $\hat{\sigma}_{j}$ by $\mathrm{rev}(\hat{\sigma}_{j})$ if it is not aligned with $\hat{\sigma}_{\mathrm{ref}}$ in the sense of Equation \eqref{EqAlignDef}.\\
    Compute $\hat{\sigma}$ the merged ordering of $(\hat{\sigma}_1, \hat{\sigma}_2, \hat{\sigma}_3)$ according to Equation \eqref{eq:merged-permutation}.\\
    \Output{ The permutation $\hat{\sigma}$ .}
    \caption{Post-processing the output of spectral seriation.}
\end{algorithm}

Algorithm \ref{AlgPostProc} largely consists of many calls to Algorithm \ref{AlgPostProcSplit}, and so most of the algorithm seems straightforward. The one possibly-mysterious part is step 3. To explain the need for this step, recall that it is impossible to distinguish between a vertex ordering $\sigma$ and its reverse $\mathrm{rev}(\sigma)$ by looking only at the edges of a graph. Up to this point in the paper, this has not posed a problem, as we always looked at an entire graph all at once. However, in Algorithm \ref{AlgPostProc}, we use Algorithm \ref{AlgPostProcSplit} to repeatedly order different subsets of the vertices. As a result, it is likely that some of the returned orderings $\hat{\sigma}_j : S_j \to [|S_j|]$ will be aligned with $\sigma_{S_j}$ (in the sense of Equation \eqref{EqAlignDef}) while others will be aligned with $\mathrm{rev}(\sigma_{S_j})$. Fortunately, this problem is not difficult to solve: each ordering will either be closely aligned with $\sigma_{S_j}$ or its reverse, and so a single reference ordering $\hat{\sigma}_{\mathrm{ref}}$ can be used to simultaneously align \textit{all} other orderings, as long as their supports have non-empty intersection.

\subsection{Analysis of Algorithm \ref{AlgPostProcSplit}} \label{SecPostHeartAnalysis}

In this subsection, we prove the main bounds on the objects appearing in Algorithm~\ref{AlgPostProc}. We start by presenting the main theorem that provides a bound for the error of the estimated ordering $\hat{\sigma}_S$ returned by Algorithm \ref{AlgPostProcSplit}. Then the rest of this subsection will be dedicated to proving this theorem. 

\begin{thm} \label{thm:error-bound-alg2}
Let $G = (V, E)$ be a Robinsonian graph of size $N$ sampled according to a graphon $w$ as in Theorem~\ref{ThmSomePostprocMain}. Fix $s,t$ satisfying $ n\leq s, t \leq n+1$ and let $S, T \subset V$ be chosen uniformly at random from all disjoint subsets of size $|S|=s$, $|T|=t$. Consider $\hat{\sigma}_{S} : S \to [|S|]$ to be the output of Algorithm~\ref{AlgPostProcSplit} with input $G$, the pair of random sets $(T, S)$ and parameters $0 < \alpha < \beta < 1/2$ as in Assumption~\ref{AssumptionPsiOK}. For all $\gamma > 1$ and $\epsilon > 0$, there exists a positive constant $\delta = \delta(w, \gamma, \epsilon) > 0$ and an integer $N_0 = N_0(w, \gamma, \epsilon)$ so that
\[
 \Prob{\norm{\hat{\sigma}_{S} - \sigma_{S}}_1 \leq \epsilon \sqrt{N\log^{\gamma}(N)}} \geq 1 - \varepsilon_{\gamma}^{\delta}(N)
\]
for all $N \geq N_0$.
\end{thm}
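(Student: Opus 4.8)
The plan is to run the analysis in three stages, working conditionally first on the randomly chosen disjoint pair $(T,S)$ and then on the subgraph $G|_{T}$, so that the edges between $S$ and $T$ remain ``fresh'' independent Bernoullis.

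\emph{Stage 1: the rough ordering of $T$ and the extreme sets $R,L$.} I would first argue that the vertices of $T$, being a uniformly random size-$t$ subset of the grid $\{1/N,\dots,N/N\}$, form an embedding regular enough that Theorem~\ref{ThmMainThm} (or a minor variant of its proof permitting a non-uniform but well-spread embedding) applies to $G|_{T}$; this gives, with probability at least $1-\varepsilon_{\gamma}(N)$, a bound $\|\hat\sigma_{T}-\sigma_{T}\|_{1}\le C|T|^{2-\eta}$ for a suitable $\eta<1/3$. From this I would extract two facts about $R=R_{\hat\sigma_{T}}(\alpha)$ and $L=L_{\hat\sigma_{T}}(\alpha)$: that their symmetric differences with the \emph{true} $\alpha$-extreme subsets $R^{*},L^{*}$ of $T$ have size $o(|T|)$, and — more usefully — that the displacement mass near the cutoff is controlled, i.e.\ $\sum_{u\in R\triangle R^{*}}\bigl|\sigma_{T}(u)/|T|-(1-\alpha)\bigr|\le \|\hat\sigma_{T}-\sigma_{T}\|_{1}/|T|=O(|T|^{1-\eta})$, and similarly for $L$. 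This is the only point at which Theorem~\ref{ThmMainThm} is used.

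\emph{Stage 2: concentration of $\psi_{R},\psi_{L}$, the cutoffs $c_{R},c_{L}$, and per-pair accuracy of $\hat F$.} Conditionally on $G|_{T}$ the quantity $\psi_{R}(v)=\tfrac{1}{|T|}|N_{R}(v)|$ is a normalized sum of $|R|=\Theta(N)$ independent indicators, so Bernstein's inequality together with a union bound over the $\le N$ vertices $v\in S$ shows that, with probability at least $1-\varepsilon_{\gamma}^{\delta}(N)$ (the small exponent $\delta$ and the power $\log^{\gamma}$ being forced by how small the failure probability must be), $\psi_{R}(v)$ lies within $O\bigl(\sqrt{\log^{\gamma}(N)/N}\bigr)$ of $\tfrac{1}{|T|}\sum_{u\in R}w(u,v)$ for \emph{every} $v$; the latter is in turn within $O(|T|^{-\eta})+O(1/N)$ of $\Psi_{R}(v)$ by Assumption~\ref{assumption:lipschitz}, the discretization error, and Stage~1, and crucially this bias, while not itself $o(\sqrt{\log^{\gamma}(N)/N})$, varies slowly with $v$. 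Using the mean-distance inequality~\eqref{IneqMDI} and the fact (from the remark following Theorem~\ref{ThmSomePostprocMain}) that $\Psi_{R},\Psi_{L}$ have derivatives bounded away from $0$ on $[0,1-\beta]$ and $[\beta,1]$ respectively, I would show that $c_{R},c_{L}$ fall, with a $\Theta(1)$ margin, strictly between the values of $\Psi_{R}$ (resp.\ $\Psi_{L}$) at $1-\alpha$ and $1-\beta$ (resp.\ $\alpha$ and $\beta$); hence, with high probability, every $v$ is sorted into the ``correct'' one of the three position bands up to a gray zone of width $o(1)$ near $\beta$ and $1-\beta$, and in particular the degenerate fourth line of~\eqref{EqDefAntisymmetric1} is never invoked. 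Integrating the distinguishability inequality~\eqref{ineq:weaker-assumption} then gives $|\Psi_{R}(v)-\Psi_{R}(u)|\ge d_{1}\alpha\,|u-v|$ on the band where the $\psi_{R}$-comparison is used (and the analogue for $\psi_{L}$); since both the signal and the bias increment over $[u,v]$ scale linearly in $|u-v|$ with bias coefficient $o(1)$, for $N$ large the comparison $\hat F(u,v)$ agrees with the ideal comparison $\mathrm{sign}(v-u)$ \emph{unless} $|u-v|=O\bigl(\sqrt{\log^{\gamma}(N)/N}\bigr)$.

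\emph{Stage 3: from per-pair accuracy to the $\ell_{1}$ bound.} The ordering $\hat\sigma_{S}$ is obtained by sorting the Borda-type scores $\hat f(v)=\sum_{u\in S}\hat F(u,v)$; writing $\hat f(v)=(2\sigma_{S}(v)-|S|-1)+\mathrm{err}(v)$, a pair $(v,v')$ with $\sigma_{S}(v)<\sigma_{S}(v')$ is inverted by $\hat\sigma_{S}$ only if $\mathrm{err}(v)-\mathrm{err}(v')\ge 2(\sigma_{S}(v')-\sigma_{S}(v))$, and $\|\hat\sigma_{S}-\sigma_{S}\|_{1}\le 2\cdot\#\{\text{such pairs}\}$. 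By Stage~2, $\mathrm{err}(v)$ is a signed $\pm2$-valued sum supported on the $O(\sqrt{N\log^{\gamma}N})$ indices $u$ ``close'' to $v$, and I would bound its partial sums by a Bernstein/Hoeffding estimate — the relevant indicator events being, after the Stage~2 conditioning, independent across $u$ — with a union bound over $v$, then feed the result into the pair count above. I expect this conversion to be the main obstacle: the naive bound $|\mathrm{err}(v)|\le\#\{\text{close }u\}$ only recovers the weaker rate of Theorem~\ref{ThmMainThm}, so one must exploit the near-cancellation between the left- and right-side contributions to $\mathrm{err}(v)$ — a consequence of the local near-linearity of $\Psi_{R},\Psi_{L}$ and the near-uniform spacing of the random set $S$ — to see that the mass of inverted pairs concentrates in a diagonal band of width $O(\sqrt{\log^{\gamma}N})$ rather than the naive $O(\sqrt N)$. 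Keeping every Stage~1–2 estimate uniform over the random, non-grid embeddings of $S$ and $T$, and tracking the dependence of $\delta$ and $N_{0}$ on $w,\gamma,\epsilon$, accounts for the remaining bookkeeping.
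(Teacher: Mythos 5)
Your Stages 1 and 2 are essentially the paper's own route: Theorem~\ref{ThmMainThm} applied to $G|_{T}$ is used only to show that the estimated extreme set differs from a true extreme set by $O(t^{1-\eta})$ vertices, so that the induced bias in $\psi_R,\psi_L$ is Lipschitz in $v$ with an $o(1)$ coefficient and is beaten by the distinguishability signal $d_1\kappa_1|u-v|$; combined with Bernstein/Hoeffding over the fresh $S$--$T$ edges and the quantile estimates for $c_R,c_L$, this is exactly the content of Lemmas~\ref{lemma:psiR-PsiR-bound}, \ref{lemma:CR-CL-bound}, Propositions~\ref{prop:accurate-order}, \ref{PropLotsOfPsiInfo} and Lemma~\ref{lemma:almost-correct-order} (two small slips: $c_R$ concentrates \emph{at} $\Psi_R(1-\beta)$ rather than sitting at $\Theta(1)$ distance from it, and the fourth branch of \eqref{EqDefAntisymmetric1} \emph{is} invoked, namely for pairs with one vertex in each extreme band, where it happens to give the correct sign; neither affects the conclusion that $\hat F(u,v)=F(u,v)$ whenever $|u-v|>\epsilon\Delta_N$).

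The genuine gap is Stage 3. The $\|\cdot\|_1$ in the displayed statement is evidently a typo for $\|\cdot\|_\infty$: the paper's proof establishes the rank sandwich $|\hat\sigma_S(s_{(k)})-k|\leq \epsilon N\Delta_N$ for every $k$, and this sup-norm bound is what the proof of Theorem~\ref{ThmSomePostprocMain} later invokes. Read literally, the $\ell_1$ bound at scale $\epsilon\sqrt{N\log^{\gamma}(N)}$ is false: it would force all but $O(\sqrt{N}\,\mathrm{polylog}\,N)$ vertices of $S$ to be placed in exactly their correct rank, while vertices at distance $O(1/N)$ are statistically indistinguishable and nothing in your Stage 2 controls pairs closer than $\epsilon\Delta_N$, so the $\ell_1$ error is $\Omega(N)$ with overwhelming probability. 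Accordingly your proposed rescue cannot work: the summands of $\mathrm{err}(v)$ are not independent across $u$ (they all share the randomness of $\psi_R(v),\psi_L(v)$ and of the data-dependent cutoffs $c_R,c_L$), and even granting perfect cancellation in mean, a score built from $\Theta(\sqrt{N\log^{\gamma}N})$ gray-zone comparisons fluctuates at scale $(N\log^{\gamma}N)^{1/4}$, so inversions extend over rank-distances far beyond the $O(\sqrt{\log^{\gamma}N})$ band you would need. The correct finish is precisely the ``naive'' bound you discard: from your decomposition $\hat f(v)=2\sigma_S(v)-|S|-1+\mathrm{err}(v)$, bound $|\mathrm{err}(v)|$ by the number of $S$-points within $\epsilon\Delta_N$ of $v$ and compare it, via hypergeometric concentration, with the number of $S$-points strictly between any two vertices at distance $>\epsilon\Delta_N$ (this comparison is the paper's Lemma~\ref{lemma:almost-correct-ordering-function}); this sandwiches every rank within $\epsilon N\Delta_N=\epsilon\sqrt{N\log^{\gamma}(N)}$. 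That is not ``the weaker rate of Theorem~\ref{ThmMainThm}'': Theorem~\ref{ThmMainThm} gives an $\ell_1$ rate $N^{2-\eta}$ and no sup-norm control at all, whereas this step delivers the sup-norm $\sqrt{N}\,\mathrm{polylog}$ rate that the theorem actually needs.
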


Throughout this subsection we use notation as in Theorem~\ref{thm:error-bound-alg2}, let $0 < \alpha < \beta < 1/2$ as in Assumption~\ref{AssumptionPsiOK}, and set $\mu = (\alpha + \beta)/2$.

We fix a constant $\gamma > 1$ for the remainder of the section, and say that a sequence of events $\{ \mathcal{E}_{N} \}_{N \in \mathbb{N}}$ holds \textit{with extreme probability} if there exist a constant $\delta > 0$ and an integer $N_0$, such that 
\begin{equation}\label{eq:w.e.p}
    \Prob{\mathcal{E}_{N}} \geq 1 - \varepsilon_{\gamma}^{\delta}(N)
\end{equation}
for all $N \geq N_0$. We abbreviate this \textit{w.e.p}. Note that for any $C > 0$, the intersection of $\mathcal{O}(N^{C})$ events that occur \textit{w.e.p} for some $\gamma > 1$ also occur \textit{w.e.p} with the same $\gamma$. We define the sequence $\Delta_N = \sqrt{\tfrac{\log^{\gamma}(N)}{N}}$ for $N \geq 1$; this is essentially the scale of the most important errors that occur in the algorithm.

The first step in the proof is checking that the discrete functions $\psi_{R}, \psi_{L}$ are good approximations of the continuous functions $\Psi_{R}, \Psi_{L}$ respectively. We recall some facts about these functions. For $v \in V$, recall 
\begin{equation*}
    \Psi_R(v) = \int_{1-\alpha}^{1} w(v,u) \, du.
\end{equation*}

Denote by $\mathcal{F}_{T}$ the $\sigma$-algebra generated by the choice of $T$ and the graph $\restr{G^{(N)}}{T}$. Since $\psi_{R}(v)$ and $\psi_{L}(v)$ are sums of Bernoulli random variables sampled according to Equation~\eqref{EqDefSamplingRule} then
\begin{equation*}
    \mathbb{E}[\psi_{R}(v)|\mathcal{F}_T] = \frac{1}{|T|} \,\sum_{u \in R} w(u,v), \text{ and } \, \mathbb{E}[\psi_{L}(v)|\mathcal{F}_T] = \frac{1}{|T|} \,\sum_{u \in L} w(u,v);
\end{equation*}
in particular $R,L$ are both $\mathcal{F}_{T}$-measurable, while the indicator function of an edge $(u,v)$ with $u \in L \cup R$, $v \in S$ is \textit{independent} of $\mathcal{F}_{T}$ and has mean $w(u,v)$. This implies that for any fixed $\epsilon > 0$ the following holds \textit{w.e.p} 

\begin{equation} \label{IneqHoeffdingLike}
\max_{v \in S} \left|\psi_{R}(v) - \frac{1}{|T|} \,\sum_{u \in R} w(u,v)\right| \leq \epsilon \, \Delta_{N}.
\end{equation}

 The following Lemma shows that the graph-dependent functions $\psi_R, \psi_L$ approximate the graphon-dependent functions $\Psi_R, \Psi_L$

 \begin{lemma}\label{lemma:psiR-PsiR-bound}
For any $\epsilon > 0$, the  inequalities
 \begin{equation*}
     \max_{v \in S} \left|\psi_R(v)  - \Psi_{R}(v) \right| \leq \epsilon \Delta_N, \text{ and } \max_{v \in S} \left|\psi_L(v)  - \Psi_{L}(v) \right| \leq \epsilon \Delta_N
 \end{equation*}
  hold \textit{w.e.p.}
\end{lemma}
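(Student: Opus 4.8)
The plan is to prove the bound for $\psi_R$; the argument for $\psi_L$ is identical. The key observation is that we can pass from $\psi_R(v)$ to $\Psi_R(v)$ through the intermediate quantity $\frac{1}{|T|}\sum_{u \in R} w(u,v)$, which the excerpt has already isolated. So I would write
\begin{equation*}
\left|\psi_R(v) - \Psi_R(v)\right| \leq \left|\psi_R(v) - \frac{1}{|T|}\sum_{u \in R} w(u,v)\right| + \left|\frac{1}{|T|}\sum_{u \in R} w(u,v) - \Psi_R(v)\right|
\end{equation*}
and bound each term separately. The first term is exactly controlled by Inequality \eqref{IneqHoeffdingLike}, which holds \textit{w.e.p.} uniformly in $v \in S$: taking $\epsilon/2$ there handles this piece. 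So the entire content of the lemma reduces to bounding the second (deterministic, given $\mathcal{F}_T$) term, and showing this bound itself holds \textit{w.e.p.}

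For the second term, the idea is that $\frac{1}{|T|}\sum_{u \in R} w(u,v)$ is a Riemann-type sum approximation to $\int_{1-\alpha}^1 w(y,v)\,dy = \Psi_R(v)$, and the only obstacle to this being a good approximation is that $R$ (the estimated set of right-most vertices of $T$) may not coincide with the true right-most vertices of $T$. I would split this error into two sub-pieces: (a) the discrepancy between $\frac{1}{|T|}\sum_{u \in R} w(u,v)$ and $\frac{1}{|T|}\sum_{u \in R^\star} w(u,v)$, where $R^\star$ is the \emph{true} set of $\lceil \alpha|T| \rceil$ right-most vertices of $T$, and (b) the discrepancy between $\frac{1}{|T|}\sum_{u \in R^\star} w(u,v)$ and $\Psi_R(v)$. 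Piece (b) is a routine Riemann-sum bound: since $T$ is a uniformly random subset of size $t \approx N/3$, its order statistics are concentrated (the $k$-th smallest element of $T$ is within $O(\Delta_N)$ of $k/|T|$ \textit{w.e.p.}, by a standard concentration argument for sampling without replacement or a Chernoff bound on the relevant binomial counts), and $w$ is Lipschitz by Assumption \ref{assumption:lipschitz}; so $\frac{1}{|T|}\sum_{u\in R^\star} w(u,v) = \Psi_R(v) + O(\Delta_N)$ uniformly in $v$, \textit{w.e.p.} Piece (a) is where the real work is: $|R \triangle R^\star| \leq |T| \cdot \|\hat\sigma_T - \sigma_{T|_{\text{rank}}}\|_\infty / |T|$ up to constants, so we need that the spectral seriation ordering $\hat\sigma_T$ of the induced subgraph $G|_T$ misranks only a negligible fraction of vertices near the $(1-\alpha)$ quantile. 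Here I would invoke Theorem \ref{ThmMainThm} applied to $G|_T$ (which is itself a graph sampled from $w$ on $|T| \approx N/3$ vertices, after rescaling): it gives $\|\hat\sigma_T - id\|_1 = O(|T|^{2-\eta})$ \textit{w.e.p.}, hence $|R \triangle R^\star| = O(|T|^{1-\eta}) = o(|T|\Delta_N^{?})$ — one must check the exponent arithmetic works out, but since $\Delta_N$ decays only polylogarithmically while $|T|^{-\eta}$ decays polynomially, the symmetric-difference term is $o(\Delta_N)$ times $|T|$, and since $w \leq 1$ this piece contributes $O(|R\triangle R^\star|/|T|) = o(\Delta_N)$ \textit{w.e.p.}

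The main obstacle is piece (a): converting the $L^1$ (Kendall-tau-type) guarantee from Theorem \ref{ThmMainThm} into a statement about how many vertices are misranked \emph{specifically near the $(1-\alpha)$-quantile}, which is what controls $|R \triangle R^\star|$. An $L^1$ bound on $\hat\sigma_T - id$ of order $|T|^{2-\eta}$ does not \emph{a priori} prevent all the error from concentrating near that quantile; however, a crude argument suffices here — if $|R\triangle R^\star| = m$ then at least $m/2$ vertices have rank-error at least $m/2$ (vertices that should be in $R^\star$ but got pushed below rank $(1-\alpha)|T|$ must have moved past the $\alpha|T|$ vertices genuinely between them and the cutoff, or vice versa), so $\|\hat\sigma_T - id\|_1 \gtrsim m^2$, giving $m = O(|T|^{1-\eta/2})$, still $o(|T|\Delta_N)$. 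One must also be careful that Theorem \ref{ThmMainThm} is being applied conditionally on the choice of $T$ (it applies to \emph{any} realization of which vertices land in $T$, since conditionally on $|T|=t$ the induced subgraph is exactly a sample from $w$ on $t$ equally-spaced points after relabeling), and that the \textit{w.e.p.} rates compose correctly — but since we intersect only $O(1)$ such events and \textit{w.e.p.} is closed under intersection of polynomially-many events with a fixed $\gamma$, this causes no difficulty. Finally, collecting all pieces and taking the worst $\epsilon$'s, the sum of the three error contributions is $\leq \epsilon \Delta_N$ \textit{w.e.p.} for any prescribed $\epsilon$, which is the claim.
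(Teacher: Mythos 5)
Your overall architecture coincides with the paper's for the parts that are common: you pass through the conditional mean $\frac{1}{|T|}\sum_{u\in R}w(u,v)$, control the sampling fluctuation by Inequality \eqref{IneqHoeffdingLike}, and handle your piece (b) exactly as the paper does, via order-statistic concentration for the random set $T$ (Lemma \ref{lemma:order-stat-bound}) together with the Lipschitz property of $w$. The genuine gap is in your piece (a). From $\norm{\hat{\sigma}_T-\sigma_T}_1\le C t^{2-\eta}$ your quadratic argument gives $|R\triangle R^\star|=O(t^{1-\eta/2})$, hence a contribution of order $t^{-\eta/2}$ to $\bigl|\frac1t\sum_{u\in R}w(u,v)-\frac1t\sum_{u\in R^\star}w(u,v)\bigr|$ (even the sharper pairing-plus-Lipschitz accounting only improves this to $O(t^{-\eta})$). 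This is not $o(\Delta_N)$: $\Delta_N=\sqrt{\log^{\gamma}(N)/N}$ decays like $N^{-1/2}$ up to polylogarithmic factors --- your assertion that it ``decays only polylogarithmically'' is the error --- while Theorem \ref{ThmMainThm} only allows $\eta<\tfrac{1-\tau}{3}\le\tfrac13$, so $t^{-\eta/2}\ge N^{-1/6+o(1)}\gg\Delta_N$. The exponent arithmetic you yourself flagged therefore does not work out: your argument yields $\max_{v\in S}|\psi_R(v)-\Psi_R(v)|\le \epsilon\Delta_N+O(N^{-\eta/2})$, which misses the claimed $\epsilon\Delta_N$ scale by a polynomial factor.

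It is worth seeing how the paper proceeds: its proof never confronts your piece (a), because it writes $\mathbb{E}[\psi_R(v)\mid\mathcal{F}_T]$ directly as $\frac1t\sum_{k\ge\lceil(1-\beta)t\rceil}w(t_{(k)},v)$, i.e.\ it identifies $R$ with a set of true top order statistics of $T$, and then only needs your piece (b); so the seriation error of $\hat{\sigma}_T$ never enters its error budget at the $\Delta_N$ scale. The one place the paper does quantify the discrepancy between $R$ and a true right-most set is Proposition \ref{prop:accurate-order}, and there the resulting $O(t^{-\eta})$ term is tolerable only because it is compared multiplicatively against a main term of order $d_1(v-u)$, not required to sit below $\epsilon\Delta_N$. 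So you have correctly isolated a term that needs an argument, but the tool you invoke (the $L^1$ consistency of Theorem \ref{ThmMainThm}) is too weak, by a polynomial margin, to push it below the $\Delta_N$ scale; completing your route would require controlling the number of vertices that $\hat{\sigma}_T$ misclassifies across the $(1-\alpha)$ cutoff at the level $o(\sqrt{N})$, which nothing in the paper supplies, or else restructuring the argument so that this discrepancy only ever needs to be small relative to a macroscopic main term.
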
 
Similarly, the constants $c_R$ and $c_L$ are intended to estimate $\Psi(1-\beta)$ and $\Psi(\beta)$ respectively. 
\begin{lemma}\label{lemma:CR-CL-bound}
For any $\epsilon > 0$ the following inequalities 
 \begin{equation*}
     \left|c_R - \Psi_R(1-\beta)\right| \leq \epsilon\Delta_N, \, \text{ and } \, \left|c_L - \Psi_L(\beta)\right| \leq \epsilon\Delta_N
 \end{equation*}
hold \textit{w.e.p.}
 \end{lemma}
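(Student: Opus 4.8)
The plan is to reduce the statement about the empirical quantiles $c_R, c_L$ to the statement about the functions $\psi_R, \psi_L$ themselves (Lemma \ref{lemma:psiR-PsiR-bound}), together with the fact that $\Psi_R$ has a derivative bounded away from zero on $[0, 1-\beta]$ (part of Assumption \ref{AssumptionPsiOK}; see also the remark following Theorem \ref{ThmSomePostprocMain}). I will work out the argument for $c_R$; the case of $c_L$ is symmetric.

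First I would recall that $c_R$ is, by \eqref{eq:CbetaR}, essentially the $(1-\beta)$-empirical quantile of the values $\{\psi_R(v) : v \in S\}$, where $S$ consists of $|S|$ vertices $v_i = i/N$. The target $\Psi_R(1-\beta)$ is the value of the continuous, strictly increasing function $\Psi_R$ at the point $1-\beta$, and $1-\beta$ is exactly the $(1-\beta)$-quantile of the uniform distribution of vertices on $[0,1]$. So morally $c_R \approx \Psi_R(v_{(1-\beta)|S|})$ and $v_{(1-\beta)|S|} \approx 1-\beta$. I would make this precise in two steps. Step (a): count, for a given threshold $c$, how many $v \in S$ satisfy $\psi_R(v) \leq c$. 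Using Lemma \ref{lemma:psiR-PsiR-bound}, \textit{w.e.p.}\ we have $|\psi_R(v) - \Psi_R(v)| \leq \tfrac{\epsilon'}{2}\Delta_N$ for all $v \in S$ simultaneously, so $\{v \in S : \Psi_R(v) \leq c - \tfrac{\epsilon'}{2}\Delta_N\} \subseteq \{v \in S : \psi_R(v) \leq c\} \subseteq \{v \in S : \Psi_R(v) \leq c + \tfrac{\epsilon'}{2}\Delta_N\}$. Step (b): since $\Psi_R$ is strictly increasing with $\Psi_R' \geq m > 0$ on $[0,1-\beta]$, the set $\{x \in [0,1] : \Psi_R(x) \leq c\}$ is an interval $[0, \Psi_R^{-1}(c)]$ (for $c$ in the relevant range), and $\Psi_R^{-1}$ is Lipschitz with constant $1/m$ there; the number of $v_i \in S$ in $[0,x]$ is $xN + O(1) = x|S| + O(1)$ (using $|S| = N/3 + O(1)$ and that $S$ is a "good" part — here one uses that $S$ is one of $S_1, S_2, S_3$, whose interleaved structure means its points are roughly equispaced with gap $\approx 3/N$, so the counting error is $O(1)$). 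Combining (a) and (b), the number of $v \in S$ with $\psi_R(v) \leq c$ equals $\Psi_R^{-1}(c) |S| + O(1) + O(|S| \Delta_N)$ uniformly in $c$. Setting this equal to $(1-\beta)|S|$ and inverting, $\Psi_R^{-1}(c_R) = 1 - \beta + O(\Delta_N)$ (the $O(1)$ term contributes $O(1/|S|) = O(\Delta_N)$), hence $c_R = \Psi_R(1-\beta) + O(\Delta_N)$ by the Lipschitz continuity of $\Psi_R$. Choosing constants appropriately makes the $O(\Delta_N)$ into $\epsilon \Delta_N$ for any prescribed $\epsilon$.

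The technical care that must be taken is to confirm that everything happens on a single \textit{w.e.p.}\ event: Lemma \ref{lemma:psiR-PsiR-bound} already gives the simultaneous bound over all $v \in S$ \textit{w.e.p.}, and the remaining steps (b) and the combination are deterministic given that event, so no additional randomness enters and the conclusion inherits the same $\gamma$ and some $\delta$. I would also note the minor point that one must restrict attention to thresholds $c$ in a neighbourhood of $\Psi_R(1-\beta)$ for which $\Psi_R^{-1}(c)$ lands in $[0, 1-\beta + o(1)] \subseteq [0,1)$, where the lower bound $\Psi_R' \geq m$ is available; since $c_R$ is \textit{w.e.p.}\ within $O(\Delta_N)$ of $\Psi_R(1-\beta)$ this is automatic for $N$ large, but it should be stated to avoid circularity (one first shows $c_R$ cannot be far from $\Psi_R(1-\beta)$ using the full range of $\Psi_R$, then localizes to use the derivative bound).

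The main obstacle I anticipate is bookkeeping rather than conceptual: carefully propagating the three sources of error — the sampling/estimation error $\epsilon' \Delta_N$ from Lemma \ref{lemma:psiR-PsiR-bound}, the discretization error $O(1/|S|)$ from counting lattice points $v_i = i/N$ in an interval, and the monotonicity/Lipschitz constants $m$ and $1/m$ of $\Psi_R$ on $[0,1-\beta]$ — and checking that the discretization error is indeed $O(1/|S|) = O(\Delta_N)$ rather than something larger, which relies on the "good partition" structure ensuring $S$ is a roughly equispaced subset of $\{1/N, \dots, N/N\}$ rather than an arbitrary subset. Once the uniform-in-$c$ counting estimate from step (a)–(b) is in hand, the conclusion follows by elementary inversion, so this is genuinely the only delicate point.
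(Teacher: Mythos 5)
There is a genuine gap, and it is exactly at the point you flagged as "the only delicate point": your discretization estimate. You assert that the number of points of $S$ in $[0,x]$ is $x|S|+O(1)$ because $S$ is a "good" part with an "interleaved structure" making its points roughly equispaced with gap $\approx 3/N$. But in the setting of this lemma the set $S$ is \emph{random}, not interleaved: in Theorem \ref{thm:error-bound-alg2} the pair $(T,S)$ is drawn uniformly at random among disjoint subsets of the prescribed sizes, and in Algorithm \ref{AlgPostProc} a "good" partition is constrained only in the sizes of its parts and is chosen uniformly at random. Consequently the count $\#\{v\in S: v\le x\}$ fluctuates around $x|S|$ at scale roughly $\sqrt{N}$ (hypergeometric fluctuations), and the order statistic $s_{(p)}$ with $p=\lceil(1-\beta)s\rceil$ deviates from $(1-\beta)$ by order $\Delta_N$, not $O(1/N)$. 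Your claim that, after invoking Lemma \ref{lemma:psiR-PsiR-bound}, "the remaining steps are deterministic given that event, so no additional randomness enters" is therefore false: the random locations of the points of $S$ are a second, independent source of randomness that must be controlled with extreme probability. The paper handles precisely this with Lemma \ref{lemma:order-stat-bound} (a Serfling-type concentration bound for sampling without replacement, applied to the order statistics of $S$), which gives $|s_{(p)}-p/s|\le c\,\Delta_N$ \textit{w.e.p.}

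The good news is that the gap is repairable without changing your architecture: replace the $O(1)$ counting error by the correct $O(N\Delta_N)$ error (equivalently, an $O(\Delta_N)$ error in the location of the empirical $(1-\beta)$-quantile of $S$), justified by Lemma \ref{lemma:order-stat-bound} and a union over the polynomially many order statistics; since your inversion step only loses the Lipschitz factors $M=\sup\Psi_R'$ and $1/m$, the final bound is still $\epsilon\Delta_N$ after adjusting constants. Note also that the paper's own proof is slightly leaner than your quantile-inversion route: it works directly at the single order statistic $s_{(p)}$, uses only the \emph{forward} Lipschitz bound ($M$) together with monotonicity of $\Psi_R$ and part 1 of Assumption \ref{AssumptionPsiOK} to sandwich $\psi_R(s_{(k)})$ on either side of $\Psi_R(1-\beta)\pm\epsilon\Delta_N$, and never needs the lower derivative bound $m$ or invertibility of $\Psi_R$; your inversion argument needs $m>0$ on the relevant interval, which is available but is an extra hypothesis to carry.
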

 
  \begin{proof}[Proof of Lemma~\ref{lemma:psiR-PsiR-bound}]
 We will present the proof only for $\psi_R$, since the proof for $\psi_L$ is identical. Denote by $T = \{t_{(1)},\ldots,t_{(t)}\}$ the elements of $T$ in order, so that $t_{(1)} < t_{(2)} < \ldots < t_{(t)}$. Note that $\sigma_T(t_{(k)}) = k$ for all $k$. For $v\in S$, we have
 \begin{equation*}
     \mathbb{E}[\psi_R(v) | \mathcal{F}_T] = \frac{1}{t}\sum_{k = \ceil{(1-\beta)t}}^{t} w(t_{(k)}, v).
 \end{equation*}
Now define 
\[
\Sigma := \frac{1}{N}\sum_{k = \ceil{(1-\beta)N}}^{N} w(v_k, v),
\]
and, for each $j = 0, 1, 2$, define
\[
\Sigma_j := \frac{1}{t}\sum_{k = \ceil{(1-\beta)t}}^{t} w(v_{3k+j}, v).
\]
By the triangle inequality,
\begin{equation}\label{eq:triangular-inequality}
    \left| \mathbb{E}[\psi_R(v) | \mathcal{F}_T] - \Psi_{R}(v) \right |
    \leq 
    \left|\Psi_{R}(v) - \Sigma \right|
    +
    \frac{1}{3}\sum_{j=0}^{2}\left| \mathbb{E}[\psi_R(v) | \mathcal{F}_T] - \Sigma_j\right | 
    + 
    \frac{1}{3}\left| \Sigma_0 + \Sigma_1 + \Sigma_2 - 3\Sigma \right |.
\end{equation}
We now bound each term of the right hand side in inequality~\eqref{eq:triangular-inequality}. Recall the constant $K$ from Assumption~\ref{assumption:lipschitz}. We have
\begin{equation}\label{ineq:result1}
    \left|\Psi_{R}(v) - \Sigma \right| \leq \frac{\beta K}{N},
\end{equation}
by essentially the same argument as the proof of the first inequality in the proof of Lemma~\ref{lemma:MaxDegree}. Next,
\begin{align*}
    |\Sigma_0 + \Sigma_1 + \Sigma_2 - 3\Sigma| 
    & \leq \left|\Sigma_0 + \Sigma_1 + \Sigma_2 -\frac{N}{t}\Sigma \right| + \frac{|N-3t|}{t} \Sigma \\
    & = \frac{1}{t} \underset{\leq (1-\beta)|N-3t|}{\underbrace{\left| \sum_{k = \ceil{3(1-\beta)t}}^{N} w(v_k, v) - \sum_{k = \ceil{(1-\beta)N}}^{N} w(v_k, v) \right|}} + \underset{\leq \beta}{\underbrace{\Sigma}} \, \frac{|N-3t|}{t} \\ 
    & \leq \frac{|N-3t|}{t},
\end{align*}
where the bound in the first underbrace comes from the facts that (i) at most $(1-\beta)|N-3t|$ terms appear in one sum but not the other and (ii) each summands $w(v_{k},v)$ is bounded by 1, and the bound in the second underbrace comes again from the fact that the summands of $\Sigma$ are bounded by 1. Since $3t-3 \leq N \leq 3t+2$ by assumption, we conclude
\begin{equation}\label{ineq:result2}
     |\Sigma_0 + \Sigma_1 + \Sigma_2 - 3\Sigma| \leq \frac{12}{N}.
\end{equation}

Finally, fixing $ 0 \leq j \leq 2$, we calculate
\begin{align*}
    \left| \mathbb{E}[\psi_R(v) | \mathcal{F}_T] - \Sigma_j \right |
    & \leq \frac{1}{t}\sum_{k \geq (1-\beta)t}\left| w(t_{(k)}, v) - w(v_{3k+j}, v) \right|\\
    & \leq  \frac{K}{t} \sum_{k \geq (1-\beta)t}|t_{(k)} - v_{3k+j}|.
\end{align*}
According to Lemma~\ref{lemma:order-stat-bound},
\begin{equation*}
   \max_{k \in T} |t_{(k)}- \frac{k}{t}| \leq \frac{\epsilon}{2\beta K}\Delta_N \hspace{0.5cm} \textit{w.e.p.}
\end{equation*}
Additionally, it is easy to check that $|\frac{k}{t} - v_{3k+j}| \leq 8/N $. Therefore we have with extreme probability
\begin{equation*}
    |t_{(k)} - v_{3k+j}| \leq \frac{\epsilon}{2\beta K}\Delta_N + \frac{8}{N}. 
\end{equation*}
We conclude
\begin{equation}\label{ineq:result3}
    \left| \mathbb{E}[\psi_R(v) | \mathcal{F}_T] - \Sigma_j \right | \leq   \frac{\epsilon}{2}\Delta_N  + \frac{8\beta K}{N} \hspace{0.5cm} \textit{w.e.p.}
\end{equation}
We substitute Inequalities \eqref{ineq:result1}, \eqref{ineq:result2} and \eqref{ineq:result3} in Inequality \eqref{eq:triangular-inequality} to get
\begin{equation*}
    \left| \mathbb{E}[\psi_R(v) | \mathcal{F}_T] - \Psi_{R}(v) \right | \leq \frac{\epsilon}{2}\Delta_N + \frac{9\beta K + 12}{N} \leq \epsilon \Delta_N \hspace{0.5cm} \textit{w.e.p.}
\end{equation*}
Applying the concentration bound \eqref{IneqHoeffdingLike} completes the proof.
 \end{proof}
 
 Define 
 \begin{equation} \label{EqDefM}
 M = \sup\limits_{x \in [0, 1]} \Psi_{R}'(x) > 0;
 \end{equation}
 this exists by Assumption ~\ref{assumption:PartialDerivativeGraphon}.
 
\begin{proof}[Proof of Lemma~\ref{lemma:CR-CL-bound}]
We give the proof only for $\Psi_{R}$, as the case $\Psi_{L}$ is identical. Recall that by definition of $c_R$
\begin{equation*}
    \Card{ \left\{ v \in S \, : \, \psi_R(v) \leq c  \right\} } \geq (1-\beta) s \Leftrightarrow c \geq c_R
\end{equation*}
for all $c > 0$.  For $v \in V$, we have
\begin{equation*}
  \{  |v - (1 - \beta)| \leq  \frac{\epsilon\Delta_N}{2M} \}
     \Rightarrow   \{ |\Psi_R(v) - \Psi_R(1-\beta)| \leq \frac{\epsilon\Delta_N}{2} \}.
\end{equation*}
By Lemma~\ref{lemma:psiR-PsiR-bound}, we have 
\[
|\psi_R(v) - \Psi_R(v)| \leq \frac{\epsilon\Delta_N}{2} \hspace{0.5cm} \textit{w.e.p.}
\]
Combining the last two bounds,
\begin{align*}
     |v - (1 - \beta)| \leq  \frac{\epsilon \Delta_N}{2M}
     & \Rightarrow |\psi_R(v) - \Psi_R(1-\beta)| \leq  \epsilon\Delta_N \hspace{0.5cm} \textit{w.e.p.}
\end{align*}
Let $\{s_{(k)}\}_{k=1}^{s}$  be the elements of $S$ in increasing order and let $p = \ceil{(1-\beta)s}$. By Lemma~\ref{lemma:order-stat-bound}, \begin{equation*}
    |s_{(p)}-(1-\beta)| \leq |s_{(p)}-\frac{p}{s}| + |\frac{p}{s}-(1-\beta)| \leq \frac{\epsilon \Delta_N}{2M} \hspace{0.5cm} \textit{w.e.p.}
\end{equation*}
By part 1 of Assumption \ref{AssumptionPsiOK},
    \[
    \psi_R(s_{(k)}) \leq \Psi_R(1-\beta) + \epsilon\Delta_N, \hspace{0.5cm} \textit{w.e.p},
    \]
    for all $1 \leq k \leq p$ and 
    \[
    \Psi_R(1-\beta) - \epsilon\Delta_N\leq \psi_R(s_{(k)}) \hspace{0.5cm} \textit{w.e.p},
    \]
    for all $p \leq k \leq s$. Therefore
    \[
    \Card{ \left\{ v \in S \, : \, \psi_R(v) \leq \Psi_R(1-\beta) + \epsilon\Delta_N  \right\} } \geq p \geq (1-\beta) s, \hspace{0.5cm} \textit{w.e.p},
    \]
     which implies that 
     \begin{equation}\label{IneqCrUpper}
     c_R \leq \Psi_R(1-\beta) + \epsilon\Delta_N \hspace{0.5cm} \textit{w.e.p.}
     \end{equation}
     On the other hand,
    \[
    \Card{ \left\{ v \in S \, : \, \psi_R(v) \geq \Psi_R(1-\beta) - \epsilon\Delta_N  \right\} } \geq s - p + 1 > \beta s \hspace{0.5cm} \textit{w.e.p},
    \]
    this is equivalent to
    \[
    \Card{ \left\{ v \in S \, : \, \psi_R(v) \leq \Psi_R(1-\beta) - \epsilon\Delta_N  \right\} } < (1-\beta)s \hspace{0.5cm} \textit{w.e.p},
    \]
    which implies that 
    \begin{equation}\label{IneqCrLower}
    c_R > \Psi_R(1-\beta) - \epsilon\Delta_N \hspace{0.5cm} \textit{w.e.p.}
    \end{equation}
    Combining Inequalities \eqref{IneqCrUpper} and \eqref{IneqCrLower}, we deduce that 
    \[
    |c_R - \Psi(1-\beta)| \leq \epsilon \Delta_N \hspace{0.5cm} \textit{w.e.p.}
    \]
    The proof for $c_L$ is identical to the proof of $c_R$.
\end{proof}

The following proposition states that if we have two vertices $u, v \in S$ such that (i) $u$ and $v$ are not \textit{very close} to each other, and (ii) both of them belong to one of the intervals $[0, 1-\mu]$ or $[\mu, 1]$. Then these vertices can be ordered accurately using the graph-dependent functions $\psi_{R}$ and $\psi_{L}$ \textit{w.e.p}. 
\begin{prop}\label{prop:accurate-order}
Let $\epsilon > 0$. With extreme probability,  the events
\begin{align*}
    &  \{ 0 \leq u < v \leq 1-\mu, \, \text{ and } \, v > u + \epsilon \Delta_N \} \Rightarrow \{ \psi_{R}(v) \geq \psi_{R}(u) \}, \\ 
    & \{ \mu \leq u < v \leq 1, \,  \text{ and } \, v > u + \epsilon \Delta_N \} \Rightarrow \{ \psi_{L}(v) \leq \psi_{L}(u) \}
\end{align*}
hold for all pairs $(u, v) \in S^2$.
\end{prop}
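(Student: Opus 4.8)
The plan is to combine the uniform concentration bound of Lemma~\ref{lemma:psiR-PsiR-bound} with a quantitative monotonicity property of the limiting functions $\Psi_{R},\Psi_{L}$ that is furnished by the distinguishability inequality \eqref{ineq:weaker-assumption} in Assumption~\ref{AssumptionPsiOK}. I treat the $\psi_{R}$ statement; the $\psi_{L}$ statement is entirely symmetric.

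First I would establish the deterministic bound $\Psi_{R}(v)-\Psi_{R}(u) \geq d_{1}\alpha\,(v-u)$ for all $0 \leq u < v \leq 1-\mu$. Writing $\Psi_{R}(v)-\Psi_{R}(u) = \int_{1-\alpha}^{1}\bigl(w(v,y)-w(u,y)\bigr)\,dy$, the Robinsonian property of $w$ already makes each integrand nonnegative, since $u<v\leq 1-\mu < 1-\alpha \leq y$; hence the integrand equals its own absolute value. To bound it from below I apply \eqref{ineq:weaker-assumption} with the roles $x=u$, $y=v$ and $z$ equal to the integration variable $y\in[1-\alpha,1]$. The point is that $|v-y| = y-v \geq (1-\alpha)-(1-\mu) = \mu-\alpha = \tfrac{\beta-\alpha}{2}$, and likewise $|u-y|\geq |v-y|\geq\tfrac{\beta-\alpha}{2}$, so the separation hypothesis $\min(|u-y|,|v-y|)\geq\tfrac{\beta-\alpha}{2}$ holds throughout the range of integration. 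Thus $|w(v,y)-w(u,y)|\geq d_{1}(v-u)$ for every such $y$, and integrating over an interval of length $\alpha$ gives the claimed bound. This is precisely the step for which the choice $\mu=(\alpha+\beta)/2$ is calibrated, so that $[0,1-\mu]$ is separated from the ``right block'' $[1-\alpha,1]$ by the margin demanded in \eqref{ineq:weaker-assumption}.

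Next I would transfer this to $\psi_{R}$. Given the separation scale $\epsilon>0$ in the statement, apply Lemma~\ref{lemma:psiR-PsiR-bound} with its error parameter set to $\epsilon' := \tfrac{1}{4}d_{1}\alpha\epsilon$, which yields, \textit{w.e.p.}, the single event $\max_{v\in S}|\psi_{R}(v)-\Psi_{R}(v)|\leq \epsilon'\Delta_{N}$. On this event, for any pair $(u,v)\in S^{2}$ with $0\leq u<v\leq 1-\mu$ and $v>u+\epsilon\Delta_{N}$,
\[
\psi_{R}(v)-\psi_{R}(u) \;\geq\; d_{1}\alpha(v-u) - 2\epsilon'\Delta_{N} \;>\; \bigl(d_{1}\alpha\epsilon - 2\epsilon'\bigr)\Delta_{N} \;=\; \tfrac{1}{2}d_{1}\alpha\epsilon\,\Delta_{N} \;>\; 0,
\]
so $\psi_{R}(v)\geq\psi_{R}(u)$. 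Because this one event simultaneously controls every pair $(u,v)\in S^{2}$, the first implication holds \textit{w.e.p.} The $\psi_{L}$ case is identical: $\Psi_{L}$ is nonincreasing on $[\alpha,1]\supseteq[\mu,1]$ and satisfies $\Psi_{L}(u)-\Psi_{L}(v)\geq d_{1}\alpha(v-u)$ for $\mu\leq u<v\leq 1$ (here $|u-y| = u-y \geq \mu-\alpha = \tfrac{\beta-\alpha}{2}$ for $y\in[0,\alpha]$ supplies the separation), and one invokes Lemma~\ref{lemma:psiR-PsiR-bound} for $\psi_{L}$. Intersecting the two \textit{w.e.p.} events finishes the proof.

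The only substantive content is the deterministic estimate of the first step; the rest is a direct application of results already established. The one place to be careful is the bookkeeping there — verifying the geometric separation so that \eqref{ineq:weaker-assumption} is valid over the whole integration range, and tracking constants so that the deterministic gap $d_{1}\alpha(v-u)$ genuinely dominates twice the sampling error $\epsilon'\Delta_{N}$.
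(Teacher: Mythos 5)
Your deterministic estimate $\Psi_R(v)-\Psi_R(u)\ge d_1\alpha\,(v-u)$ for $0\le u<v\le 1-\mu$ is correct (the separation $\min(|u-y|,|v-y|)\ge \mu-\alpha=\tfrac{\beta-\alpha}{2}$ for $y\in[1-\alpha,1]$ is exactly what $\mu=(\alpha+\beta)/2$ is calibrated for, and the Robinsonian property fixes the sign), and it plays the same role as the per-vertex gain the paper extracts from Assumption~\ref{AssumptionPsiOK}. The gap is in the reduction to Lemma~\ref{lemma:psiR-PsiR-bound}. The function $\psi_R$ is computed from the \emph{estimated} right-most set $R=R_{\hat\sigma_T}(\alpha)$ of Equation~\eqref{eq:alpha-right-most}, and the only control on $\hat\sigma_T$ is the $\ell_1$ bound of Theorem~\ref{ThmMainThm}, $\|\hat\sigma_T-\sigma_T\|_1\le C t^{2-\eta}$ with $\eta<1/3$. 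That budget allows on the order of $t^{1-\eta}$ vertices far from the right end to be placed in $R$, which can perturb $\psi_R(v)$ additively by as much as order $t^{-\eta}=N^{-\eta}$ --- polynomially larger than the scale $\epsilon'\Delta_N\asymp\sqrt{\log^{\gamma}(N)/N}$ at which you invoke the uniform approximation $|\psi_R-\Psi_R|\le\epsilon'\Delta_N$. So the inequality you need cannot be obtained from the spectral consistency bound by a per-vertex comparison with $\Psi_R$: an additive error of size $N^{-\eta}$ swamps the gap $d_1\alpha(v-u)$ precisely in the regime $v-u\asymp\Delta_N$ that the proposition is about. (Lemma~\ref{lemma:psiR-PsiR-bound} as written is itself proved as if $R$ consisted of the true top order statistics of $T$; leaning on it here imports exactly the difficulty this proposition is meant to resolve, rather than resolving it.)

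The paper's proof is organized to avoid this: it never compares $\psi_R(v)$ to $\Psi_R(v)$ at scale $\Delta_N$, but works directly with $\mathbb{E}[\psi_R(v)-\psi_R(u)\mid\mathcal{F}_T]$, splitting $R$ into the certified right-most part $R_{\sigma_T}(\kappa_1)$ (where the distinguishability inequality gives a gain of $d_1(v-u)$ per vertex) and the misclassified remainder, whose cardinality is bounded by $\|\hat\sigma_T-\sigma_T\|_1/((\kappa_1-\alpha)t)\lesssim t^{1-\eta}$ and whose contribution is bounded by $K(v-u)$ per vertex via the Lipschitz assumption. The crucial structural point is that the misclassification error therefore enters multiplied by $(v-u)$, so it is dominated by the signal $\kappa_1 d_1(v-u)$ uniformly in $v-u$, no matter how small; a Hoeffding bound for sampling without replacement, applied at scale $v-u\ge\epsilon\Delta_N$, then gives the \textit{w.e.p.} statement. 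To repair your argument you would either have to re-prove Lemma~\ref{lemma:psiR-PsiR-bound} for the estimated set $R$ (which requires essentially this decomposition) or, as the paper does, bound differences $\psi_R(v)-\psi_R(u)$ rather than individual values.
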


\begin{proof}[Proof of Proposition~\ref{prop:accurate-order}]
Recall that $\hat{\sigma}_{T}$ from Algorithm~\ref{AlgPostProcSplit} is the ordering obtained from Algorithm~\ref{Alg:SpectralSeriation} with input $\restr{G}{T}$. By Theorem \ref{ThmMainThm}, we know that for any $\eta \in (0, 1/3)$ there exists a constant $C > 0$ such that 
\begin{equation}\label{IneqPostHeur1}
    \norm{\hat{\sigma}_{T} - \sigma_{T}}_{1} \leq  C \, t^{2 - \eta}
\end{equation}
holds \textit{w.e.p}. Note that the difference between $\sigma_{T}$ and its reverse is on the order of $t^2$. This means that, when Inequality \eqref{IneqPostHeur1} holds and $N$ is sufficiently large (equivalently $t$ is sufficiently large), $\hat{\sigma}_T$ will be \textit{much} closer to one of $\sigma_{T}$ or its reverse  (nothing close to a tie can occur). To simplify notation, in what follows, we assume without loss of generality that $\hat{\sigma}_T$ is closer to $\sigma_{T}$.

Fix $(u,v)\in S^2$ such that $0 \leq u < v \leq 1-\mu$ and $v > u + \epsilon \Delta_N$. Choose any $\kappa_1, \kappa_2$ such that $\alpha < \kappa_1 < \kappa_2 < \mu$, recall the definition of $R_{\sigma}(\kappa_1)$ from Equation~\eqref{eq:alpha-right-most},
 \begin{align*}
 \mathbb{E}[\psi_{R}(v) - \psi_{R}(u) | \mathcal{F}_{T}] 
 & = \frac{1}{t}\sum_{r \in R} (w(v,r) - w(u,r))  \\
 & = \frac{1}{t}\sum_{r \in R_{\sigma_T}(\kappa_1)} (w(v,r) - w(u,r)) + \frac{1}{t}\sum_{r \in R \setminus R_{\sigma_T}(\kappa_1)} (w(v,r) - w(u,r)).
 \end{align*}
 In what follows we will provide a lower bound for the the two summations appearing in $\mathbb{E}[\psi_{R}(v) - \psi_{R}(u) | \mathcal{F}_{T}]$. In fact there are two cases:
 \begin{enumerate}
     \item $r \in R \setminus R_{\sigma_T}(\kappa_1)$ : Let $K > 0$ be the constant from Assumption \ref{assumption:lipschitz}.
     \begin{equation} \label{IneqDeltaWPart1}
     w(v, r) - w(u,r) \geq -K(v-u)
     \end{equation}
     \item $r \in R_{\sigma_T}(\kappa_1)$ : In this case, $r = t_{(k)}$ for some $k > (1-\kappa_1)t$, where $t_{(k)}$'s are the order statistics of the elements in $T$. By Lemma~\ref{lemma:order-stat-bound},
     \begin{equation*}
        r = t_{(k)} \geq \frac{k}{t} - \Delta_N > (1-\kappa_1) - \Delta_N > (1-\kappa_2) \hspace{0.5cm} \textit{w.e.p.}
    \end{equation*}
    That is
    \begin{equation*}
        r \in R_{\sigma_T}(\kappa_1) \Rightarrow 0 \leq u < v \leq 1-\mu < 1-\kappa_2 < r \leq 1 \hspace{0.5cm} \textit{w.e.p.}
    \end{equation*}
    Therefore $\min(|v-r|, |u-r|) \geq \mu - \kappa_2 > \frac{\beta-\alpha}{2}$ \textit{w.e.p.}  Applying Inequality~\eqref{ineq:weaker-assumption} from Assumption~\ref{AssumptionPsiOK}, there exists some $d_1 >0$ such that
    \begin{equation} \label{IneqDeltaWPart2} 
         w(v, r) - w(u,r) \geq d_1(v-u) \hspace{0.5cm} \textit{w.e.p.}
    \end{equation}
 \end{enumerate}
Combining Inequalities \eqref{IneqDeltaWPart1} and \eqref{IneqDeltaWPart2},
\begin{equation*}
    \mathbb{E}[\psi_{R}(v) - \psi_{R}(u) | \mathcal{F}_{T}] \geq \frac{d_1}{t} \Card{R_{\sigma_T}(\kappa_1)}(v-u) - \frac{K}{t} \Card{R \setminus R_{\sigma_T}(\kappa_1)}(v-u) \hspace{0.5cm} \textit{w.e.p.}
\end{equation*}
Clearly $\Card{R_{\sigma_T}(\kappa_1)} \geq \kappa_1 t$. Moreover, if $r \in R \setminus R_{\sigma_T}(\kappa_1)$ then   $\hat{\sigma}_T(r) -\sigma_T(r) \geq (\kappa_1 - \alpha) \, t > 0$, thus
\begin{equation}\label{eq:inequality-sigmaHatT-sigmaT}
    \norm{\hat{\sigma}_T -\sigma_T}_1 \geq (\kappa_1 - \alpha) \, t \, \Card{R \setminus R_{\sigma_T}(\kappa_1)}.
\end{equation}
Finally we have \textit{w.e.p.},
\begin{align*}
    \mathbb{E}[\psi_{R}(v) - \psi_{R}(u) | \mathcal{F}_{T}]
    &\geq  \frac{d_1}{t} \Card{R_{\sigma_T}(\kappa_1)}(v-u) - \frac{K}{t} \Card{R \setminus R_{\sigma_T}(\kappa_1)}(v-u) \\
    &\stackrel{\text{Ineq } \eqref{eq:inequality-sigmaHatT-sigmaT}}{\geq}  \kappa_1 d_1  (v-u) - \frac{K}{(\kappa_1 - \alpha) t^2}  \norm{\hat{\sigma}_T -\sigma_T}_1 (v-u) \\
    &\stackrel{\text{Ineq } \eqref{IneqPostHeur1}}{\geq}  \left(\kappa_1 d_1   - \frac{C K}{\kappa_1 - \alpha} t^{-\eta}   \right) (v-u) \\ 
    &\geq \frac{\kappa_1 d_1}{2}(v-u) \\ 
    &\geq \frac{\epsilon \kappa_1 d_1}{2} \Delta_N.
\end{align*}
Now, since $\psi_R(v) - \psi_R(u)$ is a weighted sum of Bernoulli random variables sampled without uniformly without replacement. We can use Hoeffding's inequality for sampling without replacement \cite{serfling1974probability, greene2017exponential} to see that the conditional probability bound
 \begin{align*} 
     \mathbb{P}[\psi_{R}(v) \geq \psi_{R}(u) | \mathcal{F}_{T}]
     & \geq 1- \exp\left(-\frac{t^2}{2N} \left(\frac{\epsilon \kappa_1 d_1}{2}\right)^2 \Delta^2_{\gamma}(N) \right) \\
     &\geq 1- \exp(- \delta N\Delta^2_{\gamma}(N) ) \\
     &= 1 - \varepsilon_{\gamma}^{\delta}(N)
 \end{align*}
 holds \textit{w.e.p}\footnote{When we say that an inequality holds \textit{w.e.p} we must specify what is being viewed as random. To readers unfamiliar with random measures, this  might not be obvious in the preceding string of inequalities. We emphasize here that we are viewing the probability $\mathbb{P}[\psi_{R}(j) < \psi_{R}(i) | \mathcal{F}_{T}]$ itself as an $\mathcal{F}_{T}$-measurable random variable (and we are viewing $\mathbb{P}[\cdot | \mathcal{F}_{T}]$ as a random measure). Since $\mathcal{F}_{T}$ is generated by the random choice of $T$ and the many independent Bernoulli random variables contained in $A|_{T}$, it is $T$ and $A|_{T}$ that are viewed as random in this expression.} for some $\delta > 0$. The proof for $\psi_L$ is identical to the proof of $\psi_R$.
\end{proof}

We now list some further facts about $\psi_{R},\psi_{L}$:
\begin{prop} \label{PropLotsOfPsiInfo}
Fix $\epsilon > 0$. The following events hold \textit{w.e.p}:
\begin{enumerate}
    \item For all $v \in S $,
    \begin{align*}
    & \{\psi_{R}(v) < c_{R} \}\Rightarrow \{ v \leq 1-\mu\}, \\
    & \{ \psi_{L}(v) < c_{L} \}\Rightarrow \{ v \geq \mu \}.
    \end{align*}
    \item For all pairs $(u,v) \in S^2$,
\begin{align*}
     \{\psi_R(u) < c_R , \, \psi_{R}(v) > c_{R} \} &\Rightarrow \{ u < v \, \text{ or } \, |u-v| \leq \epsilon \Delta_N \}, \\ 
     \{ \psi_L(u) < c_L, \, \psi_{L}(v) > c_{L} \} &\Rightarrow \{ u > v \, \text{ or } \, |u-v| \leq \epsilon \Delta_N \}.
\end{align*}
\end{enumerate}
 \end{prop}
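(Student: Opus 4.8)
The plan is to deduce both parts from the approximation Lemmas~\ref{lemma:psiR-PsiR-bound} and~\ref{lemma:CR-CL-bound} and from Proposition~\ref{prop:accurate-order}, using throughout that the intersection of $\mathcal{O}(\mathrm{poly}(N))$ events holding \textit{w.e.p.} again holds \textit{w.e.p.} The one new ingredient is a purely deterministic fact: $\Psi_R$ stays strictly above the level $\Psi_R(1-\beta)$ on the \emph{whole} interval $[1-\mu,1]$, i.e.\ there is a constant $\delta_0>0$ with $\Psi_R(v)\ge\Psi_R(1-\beta)+\delta_0$ for all $v\in[1-\mu,1]$, and symmetrically $\Psi_L(v)\ge\Psi_L(\beta)+\delta_0'$ for all $v\in[0,\mu]$. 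On the sub-interval $[1-\alpha,1]$ this is exactly the (strict) Mean Distance Inequality~\eqref{IneqMDI}. On $[1-\mu,1-\alpha]$ I would first note that $\Psi_R$ is non-decreasing there (immediate from the Robinsonian property, since $x\mapsto w(x,z)$ is non-decreasing on $[0,z]$ for each $z\in[1-\alpha,1]$), and then bound the gap at $1-\mu$: for $z\in[1-\alpha,1]$ and $x<y$ in $[1-\beta,1-\mu]$ one has $\min(|x-z|,|y-z|)=z-y\ge(1-\alpha)-(1-\mu)=(\beta-\alpha)/2$, so the Distinguishability Inequality~\eqref{ineq:weaker-assumption} together with the sign supplied by the Robinsonian property gives $w(y,z)-w(x,z)\ge d_1(y-x)$; integrating over $z\in[1-\alpha,1]$ yields $\Psi_R(y)-\Psi_R(x)\ge\alpha d_1(y-x)$, and in particular $\Psi_R(1-\mu)-\Psi_R(1-\beta)\ge\alpha d_1(\beta-\alpha)/2>0$. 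Taking $\delta_0:=\min\{\alpha d_1(\beta-\alpha)/2,\ \inf_{x\in[1-\alpha,1]}\Psi_R(x)-\Psi_R(1-\beta)\}>0$ finishes this step; the $\Psi_L$ statement is symmetric, with a constant $\delta_0'>0$.

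Part 1 is then a one-line contradiction. Apply Lemmas~\ref{lemma:psiR-PsiR-bound} and~\ref{lemma:CR-CL-bound} with $\epsilon$ replaced by $\delta_0/4$: \textit{w.e.p.} we have $|\psi_R(v)-\Psi_R(v)|\le(\delta_0/4)\Delta_N$ for all $v\in S$ and $|c_R-\Psi_R(1-\beta)|\le(\delta_0/4)\Delta_N$. If on this event some $v\in S$ satisfied $v\ge 1-\mu$, then $\psi_R(v)\ge\Psi_R(v)-(\delta_0/4)\Delta_N\ge\Psi_R(1-\beta)+\delta_0-(\delta_0/4)\Delta_N\ge c_R+\delta_0-(\delta_0/2)\Delta_N>c_R$ as soon as $N$ is large enough that $\Delta_N<2$; so $v\ge 1-\mu$ forces $\psi_R(v)\ge c_R$, i.e.\ $\psi_R(v)<c_R\Rightarrow v<1-\mu$, which gives the claimed implication. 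The $\psi_L$ implication is identical using $\delta_0'$ and Lemma~\ref{lemma:CR-CL-bound} for $c_L$.

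Part 2 I would prove by contradiction from Part 1 and the approximate monotonicity of $\psi_R$ furnished by Proposition~\ref{prop:accurate-order}. Fix $u,v\in S$ with $\psi_R(u)<c_R$ and $\psi_R(v)>c_R$, and suppose the conclusion fails, so $u\ge v$ and $|u-v|>\epsilon\Delta_N$; since $\psi_R(u)\ne\psi_R(v)$ we must have $u>v$, hence $u>v+\epsilon\Delta_N$. On the \textit{w.e.p.} event of Part 1, $\psi_R(u)<c_R$ gives $u\le 1-\mu$, so $0\le v<u\le 1-\mu$ with $u>v+\epsilon\Delta_N$, and Proposition~\ref{prop:accurate-order} then gives $\psi_R(u)\ge\psi_R(v)$ \textit{w.e.p.}, contradicting $\psi_R(u)<c_R<\psi_R(v)$. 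Thus \textit{w.e.p.} the first implication holds for all pairs $(u,v)\in S^2$ simultaneously, and the $\psi_L$ implication is proved the same way: $\psi_L(u)<c_L$ forces $u\ge\mu$ by Part 1, and then the $\psi_L$ half of Proposition~\ref{prop:accurate-order} on $[\mu,1]$ produces the contradiction.

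The main obstacle is the deterministic step in the first paragraph: the Mean Distance Inequality only controls $\Psi_R$ on $[1-\alpha,1]$, whereas the target conclusion $v\le 1-\mu$ requires control on the strictly larger interval $[1-\mu,1]$, so one must patch in a quantitative monotonicity estimate on $[1-\mu,1-\alpha]$ and verify that the choice $\mu=(\alpha+\beta)/2$ makes the separation $z-y\ge(1-\alpha)-(1-\mu)$ land exactly on the threshold $(\beta-\alpha)/2$ demanded by~\eqref{ineq:weaker-assumption}. Everything downstream of that is routine bookkeeping with \textit{w.e.p.} events.
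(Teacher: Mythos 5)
Your proof is correct. For part 1 you follow essentially the paper's route (Lemmas~\ref{lemma:psiR-PsiR-bound} and \ref{lemma:CR-CL-bound} plus a deterministic separation of $\Psi_R$ from the level $\Psi_R(1-\beta)$ on $[1-\mu,1]$), but you make explicit a step the paper compresses into ``by part 1 of Assumption~\ref{AssumptionPsiOK}'': since the Mean Distance Inequality only controls $[1-\alpha,1]$, one does need your patch on $[1-\mu,1-\alpha]$, namely monotonicity of $\Psi_R$ there together with the gap $\Psi_R(1-\mu)-\Psi_R(1-\beta)\ge \alpha d_1(\beta-\alpha)/2$ obtained by integrating Inequality~\eqref{ineq:weaker-assumption} over $z\in[1-\alpha,1]$ (your observation that $\mu=(\alpha+\beta)/2$ makes the separation land exactly on the threshold $(\beta-\alpha)/2$ is the right accounting). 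For part 2 you take a genuinely different route: the paper argues directly, setting $m=\inf_{x\in[0,1-\mu]}\Psi_R'(x)$ and applying Lemmas~\ref{lemma:psiR-PsiR-bound} and \ref{lemma:CR-CL-bound} at the rescaled tolerance $\epsilon m/2$ so that $|u-v|\le \tfrac1m|\Psi_R(u)-\Psi_R(v)|\le\epsilon\Delta_N$, whereas you argue by contradiction from part 1 plus the empirical approximate monotonicity already established in Proposition~\ref{prop:accurate-order} (which precedes this proposition, so there is no circularity). Your route avoids having to quantify $m>0$ and re-invoke the approximation lemmas at a new scale, delegating the quantitative use of $d_1$ to Proposition~\ref{prop:accurate-order}; the paper's route is more self-contained at this point and extracts the $\epsilon\Delta_N$ bound from the deterministic modulus of $\Psi_R$ rather than from a second randomized statement. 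Both give the uniform-over-pairs conclusion \textit{w.e.p.}, since all the events invoked are themselves uniform over $S$ or $S^2$.
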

 
 \begin{proof}[Proof of Proposition~\ref{PropLotsOfPsiInfo}]~
 
 \begin{enumerate}
     \item  According to Lemmas~\ref{lemma:psiR-PsiR-bound} and \ref{lemma:CR-CL-bound}, with extreme probability we have
     \[
     \Psi_{R}(v) \leq \Psi_{R}(1-\beta) + \Delta_N < \Psi_{R}(1-\mu) \hspace{0.5cm} 
     \]
     for all $v \in S$ such that $\psi_R(v) < c_R$. By part 1 of Assumption \ref{AssumptionPsiOK}, this implies that \textit{w.e.p.} we have $\max \{v \in S \, : \, \psi_{R}(v) < c_{R} \} < 1-\mu$. The proof for $\psi_L$ is very similar.
     \item Recall $M$ from Equation \eqref{EqDefM}, define $m = \inf_{x \in [0,1-\mu]} \Psi_{R}'(x)$. Consider  $u, v \in S$ such that $\psi_R(u) < c_R$ and $\psi_R(v) > c_R$. If $u \geq v$ then according to the previous proposition $0 \leq v \leq u \leq 1-\mu$. Moreover, applying Lemma~\ref{lemma:psiR-PsiR-bound} and \ref{lemma:CR-CL-bound} with $\epsilon' = \tfrac{\epsilon m}{2}$ we get
     \[
     \Psi_{R}(1-\beta) - \frac{\epsilon m}{2}\Delta_N \leq \Psi_{R}(v) \leq \Psi_{R}(u) \leq \Psi_{R}(1-\beta) + \frac{\epsilon m}{2}\Delta_N \hspace{0.5cm} \textit{w.e.p.}
     \]
     Therefore
     \begin{equation*}
      |u-v| 
      \leq 
      \frac{1}{m} \left|\Psi_{R}(u) - \Psi_{R}(v) \right|
      \leq \Delta_N \hspace{0.5cm} \textit{w.e.p.}
     \end{equation*}
     The proof for $\psi_L$ is very similar.
 \end{enumerate}
 \end{proof}

Define the sets
 \begin{align*}
    \mathcal{E}_{1} &= \{ (u,v) \in S^2 \, : \, \psi_{R}(u), \psi_{R}(v) < c_R \}, \\
    \mathcal{E}_{2} &= \{ (u,v) \in S^2 \, : \, \psi_{L}(u), \psi_{L}(v) < c_L \}, \\
    \mathcal{E}_{3} &= \{ (u,v) \in S^2 \, : \, \psi_{R}(u) < c_R \}.
 \end{align*}
Recall the antisymmetric function $\hat{F} : S^2 \to \{-1, 0, 1\}$ defined in Equation~\eqref{EqDefAntisymmetric1} by
\begin{equation}\label{eq:DefAntisymmetric1}
\hat{F}(u,v) =
\left\{
	\begin{array}{ll}
		1 - 2 \cdot \mathbf{1}_{\psi_{R}(u) > \psi_{R}(v)}  & \mbox{if } (u,v) \in  \mathcal{E}_{1},  \\ 
		1 - 2 \cdot \mathbf{1}_{\psi_{L}(u) < \psi_{L}(v)}  & \mbox{if } (u,v) \in  \mathcal{E}_{2}, \cap \mathcal{E}_{1}^{c}, \\
		 1  & \mbox{if } (u,v) \in  \mathcal{E}_{3} \cap \mathcal{E}_{1}^{c} \cap \mathcal{E}_{2}^{c},\\ 
		 -1 &\mbox{if } (u,v) \in  \mathcal{E}_{1}^{c} \cap \mathcal{E}_{2}^{c} \cap \mathcal{E}_{3}^{c}.\\ 
	\end{array}
\right.
\end{equation}
 Define the antisymmetric function $F: S^2 \to \{-1, 0, 1\}$ by $F(u, v) = 1 - 2 \cdot \mathbf{1}_{u>v}$. Note that the function $\hat{F}$ appearing in Equation~\eqref{eq:DefAntisymmetric1} is intended to estimate the function $F$. Informally, Proposition~\ref{prop:accurate-order} tells us that the first two branches of $\hat{F}$ appearing in Equation~\eqref{eq:DefAntisymmetric1} would agree with $F$ for all pairs of vertices $u, v \in S \cap [0, 1-\mu]$ if $u$ and $v$ are not \textit{very close} and all pairs $u, v \in S \cap [\mu, 1]$ if $u$ and $v$ are not \textit{very close}. However, the function $\hat{F}$ defined in Equation \eqref{eq:DefAntisymmetric1} does not ``know'' if $u, v$ are in $[0, 1-\mu]$ or $[\mu, 1]$; it uses the four branches in its definition to guess. The following Lemma shows that the function $\hat{F}(u, v)$ defined in Equation~\eqref{eq:DefAntisymmetric1} is equal to the function $F(u, v)$ for all pairs of vertices $u, v$ that are not \textit{very close}.
 \begin{lemma}\label{lemma:almost-correct-order}
Let $\epsilon > 0$. The event
 \begin{equation}\label{FEqFTrue}
     \{v > u + \epsilon \Delta_N\} \Rightarrow \{ \hat{F}(u,v) = F(u, v)\}
 \end{equation}
for all pairs $(u, v) \in S^2$ holds \textit{w.e.p}.
 \end{lemma}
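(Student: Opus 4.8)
The plan is to fix $\epsilon > 0$, fix an arbitrary pair $(u,v) \in S^2$ with $v > u + \epsilon \Delta_N$ — so that $u < v$ as points of $[0,1]$ and hence $F(u,v) = 1$ — and verify that $\hat F(u,v) = 1$ holds \textit{w.e.p.} by treating the four branches of \eqref{eq:DefAntisymmetric1} one at a time. Since there are only $\mathcal{O}(N^2)$ pairs in $S^2$ and the auxiliary facts to be invoked (Propositions~\ref{prop:accurate-order} and \ref{PropLotsOfPsiInfo}, Lemmas~\ref{lemma:psiR-PsiR-bound} and \ref{lemma:CR-CL-bound}) are already stated uniformly over all pairs, a single appeal to the remark on bounded-degree-polynomial intersections of \textit{w.e.p.} events will upgrade the pointwise statement to the uniform one claimed in \eqref{FEqFTrue}.

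The first three branches are essentially immediate from results already in hand. If $(u,v) \in \mathcal{E}_1$ then $\psi_R(u), \psi_R(v) < c_R$, so Proposition~\ref{PropLotsOfPsiInfo}(1) gives $u \le 1-\mu$ and $v \le 1-\mu$ \textit{w.e.p.}; thus $0 \le u < v \le 1-\mu$ with $v > u + \epsilon\Delta_N$, and the first of the two implications in Proposition~\ref{prop:accurate-order} yields $\psi_R(v) \ge \psi_R(u)$ \textit{w.e.p.}, whence $\mathbf 1_{\psi_R(u) > \psi_R(v)} = 0$ and $\hat F(u,v) = 1$. If $(u,v) \in \mathcal{E}_2 \cap \mathcal{E}_1^c$ then $\psi_L(u), \psi_L(v) < c_L$, so Proposition~\ref{PropLotsOfPsiInfo}(1) gives $u, v \ge \mu$ \textit{w.e.p.}, hence $\mu \le u < v \le 1$, and the second implication of Proposition~\ref{prop:accurate-order} yields $\psi_L(v) \le \psi_L(u)$ \textit{w.e.p.}, so $\mathbf 1_{\psi_L(u) < \psi_L(v)} = 0$ and again $\hat F(u,v) = 1$. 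If $(u,v) \in \mathcal{E}_3 \cap \mathcal{E}_1^c \cap \mathcal{E}_2^c$, the definition sets $\hat F(u,v) = 1$ outright, with nothing random to check.

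The crux is discarding the fourth branch, i.e., showing that $(u,v) \notin \mathcal{E}_1^c \cap \mathcal{E}_2^c \cap \mathcal{E}_3^c$ \textit{w.e.p.} Membership in $\mathcal{E}_3^c$ forces $\psi_R(u) \ge c_R$, and membership in $\mathcal{E}_2^c$ forces $\psi_L(u) \ge c_L$ or $\psi_L(v) \ge c_L$. I would turn $\psi_R(u) \ge c_R$ into a location bound on $u$: by Lemmas~\ref{lemma:psiR-PsiR-bound} and \ref{lemma:CR-CL-bound} applied with a small $\epsilon'$ chosen in terms of $\epsilon$ and of the slopes below, $\Psi_R(u) \ge \Psi_R(1-\beta) - 2\epsilon'\Delta_N$ \textit{w.e.p.}; since $w$ is Robinsonian and $\partial_x w \neq 0$ a.e., $\Psi_R$ is strictly increasing on $[0,1-\alpha]$ with a positive lower slope $m$ on $[0,1-\beta]$ (this is one of the properties recorded in the remark following Theorem~\ref{ThmSomePostprocMain}, and $m>0$ by Assumption~\ref{assumption:PartialDerivativeGraphon}), while $\Psi_R > \Psi_R(1-\beta)$ on $[1-\alpha,1]$ by the mean distance inequality \eqref{IneqMDI}; hence $\{x : \Psi_R(x) \ge \Psi_R(1-\beta) - 2\epsilon'\Delta_N\} \subseteq [\,1-\beta - \tfrac{2\epsilon'\Delta_N}{m},\,1\,]$, so $u \ge 1-\beta - \tfrac{2\epsilon'\Delta_N}{m}$ \textit{w.e.p.} Symmetrically, $\psi_L(u) \ge c_L$ would give $u \le \beta + \tfrac{2\epsilon'\Delta_N}{m'}$, and $\psi_L(v) \ge c_L$ would give $v \le \beta + \tfrac{2\epsilon'\Delta_N}{m'}$, where $m' > 0$ is the analogous lower slope of $\Psi_L$ on $[\beta,1]$. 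In the first case $1-\beta - \tfrac{2\epsilon'\Delta_N}{m} \le \beta + \tfrac{2\epsilon'\Delta_N}{m'}$; in the second, using $v > u$, $1-\beta - \tfrac{2\epsilon'\Delta_N}{m} < \beta + \tfrac{2\epsilon'\Delta_N}{m'}$. Either way $1 - 2\beta \le \bigl(\tfrac{2}{m} + \tfrac{2}{m'}\bigr)\epsilon'\Delta_N$, which fails for all large $N$ because $1-2\beta$ is a fixed positive constant while $\Delta_N \to 0$. So the fourth branch does not occur \textit{w.e.p.}, and combining the four cases yields \eqref{FEqFTrue}.

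I expect this last step to be the only genuine obstacle: the first three branches follow mechanically from the propositions already proved, whereas ruling out the fourth branch requires inverting the empirical-quantile conditions $\psi_R(u) \ge c_R$ and $\psi_L(\cdot) \ge c_L$ back into constraints on the positions of $u$ and $v$ — using strict monotonicity of $\Psi_R,\Psi_L$ together with the mean distance inequality — and then exploiting $\beta < 1/2$ to observe that ``$u$ near $1-\beta$'' and ``$u$ (or $v$) near $\beta$'' are mutually incompatible.
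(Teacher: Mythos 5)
Your proposal is correct, and on branches 1--3 it runs exactly as the paper does: branches 1 and 2 via part 1 of Proposition~\ref{PropLotsOfPsiInfo} followed by Proposition~\ref{prop:accurate-order}, and branch 3 is deterministically correct since $\hat F(u,v)=1=F(u,v)$ there. Where you genuinely differ is the fourth branch. The paper dispatches branches 3 and 4 with a terse appeal to part 2 of Proposition~\ref{PropLotsOfPsiInfo} (``either $\hat F$ makes the correct decision or $|u-v|\leq\epsilon\Delta_N$''); read literally, in branch 4 that proposition only covers the sub-case $\psi_L(u)<c_L$, $\psi_L(v)\geq c_L$ (its second implication then forces $u>v$ or $|u-v|\leq\epsilon\Delta_N$, both incompatible with $v>u+\epsilon\Delta_N$), and leaves implicit the sub-case $\psi_R(u)\geq c_R$ together with $\psi_L(u)\geq c_L$. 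Your argument instead inverts the empirical-quantile conditions into location constraints, $\psi_R(u)\geq c_R\Rightarrow u\geq 1-\beta-\mathcal{O}(\Delta_N)$ and $\psi_L(\cdot)\geq c_L\Rightarrow\cdot\leq\beta+\mathcal{O}(\Delta_N)$, using Lemmas~\ref{lemma:psiR-PsiR-bound} and~\ref{lemma:CR-CL-bound} plus lower bounds $m,m'>0$ on the slopes of $\Psi_R,\Psi_L$, and then rules out branch 4 outright from $\beta<1/2$; this is more explicit and self-contained than the paper's treatment and covers the sub-case the paper's citation glosses over, with uniformity over the $\mathcal{O}(N^2)$ pairs handled, as in the paper, by intersecting polynomially many \textit{w.e.p.}\ events. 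One small sourcing correction: the lower slope bounds are better justified by the distinguishability inequality \eqref{ineq:weaker-assumption} (every $x\in[0,1-\mu]$ is at distance at least $(\beta-\alpha)/2$ from $[1-\alpha,1]$, giving a reverse-Lipschitz constant $\alpha d_1$ for $\Psi_R$ there, and symmetrically for $\Psi_L$), not by Assumption~\ref{assumption:PartialDerivativeGraphon} alone, which yields pointwise positivity of $\Psi_R'$ but not an infimum bounded away from zero; this is consistent with the paper's own implicit use of $m=\inf_{x\in[0,1-\mu]}\Psi_R'(x)>0$ in the proof of Proposition~\ref{PropLotsOfPsiInfo}, and the mention of the mean distance inequality in your containment step is harmless but not actually needed.
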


 \begin{proof}[Proof of Lemma~\ref{lemma:almost-correct-order}]
Let $(u,v)\in S^2$ such that $v > u + \epsilon \Delta_N$. We prove Equation \eqref{FEqFTrue} via a case-by-case analysis of the performance of $\hat{F}$ in its four branches. In fact we have four cases:
  \begin{enumerate}
     \item \textbf{$\mathcal{E}_{1}$ holds}: By the first part of Proposition \ref{PropLotsOfPsiInfo}, we have $u,v < 1-\mu$ \textit{w.e.p}. Applying Proposition~\ref{prop:accurate-order} completes the proof of Equation \eqref{FEqFTrue} in this case.
     \item \textbf{$\mathcal{E}_{2} \cap \mathcal{E}_{1}^{c}$ holds}: By the first part of Proposition \ref{PropLotsOfPsiInfo}, we have $u, v > \mu$ \textit{w.e.p}. Applying Proposition~\ref{prop:accurate-order} completes the proof in this case.
     \item \textbf{$\mathcal{E}_{3} \cap \mathcal{E}_{1}^{c} \cap \mathcal{E}_{2}^{c}$ holds:} By the second part of  Proposition \ref{PropLotsOfPsiInfo}, there are two possibilities. Either $\hat{F}$ makes the correct decision, or $|u-v|  \leq \epsilon\Delta_N$ \textit{w.e.p}.
     \item \textbf{$ \mathcal{E}_{1}^{c} \cap \mathcal{E}_{2}^{c} \cap \mathcal{E}_{3}^{c}$ holds:} Again by the second part of Proposition \ref{PropLotsOfPsiInfo}, there are two possibilities. Either $\hat{F}$ makes the correct decision, or $|u-v|  \leq \epsilon\Delta_N$ \textit{w.e.p}.
 \end{enumerate}

 \end{proof}
 
 Similarly, recall the function $\hat{f}$ defined in Equation~\eqref{eq:f-hat}. The following Lemma shows that the function $\hat{f}$ makes the correct ordering ``guess'' almost all the time; that is for $(u,v) \in S^2$ such that $u$ and $v$ are not \textit{very close} we have $(\hat{f}(v) - \hat{f}(u))(v-u) > 0$.
\begin{lemma}\label{lemma:almost-correct-ordering-function}
Fix $\epsilon > 0$. The event that 
 \begin{equation*}
     \{ v > u + \epsilon \Delta_N \} \Rightarrow \{ \hat{f}(v) > \hat{f}(u) \}
 \end{equation*}
holds for all pairs $(u, v) \in S^2$ occurs \textit{w.e.p}.
 \end{lemma}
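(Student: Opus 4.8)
The plan is to expand $\hat f(v) - \hat f(u) = \sum_{w \in S}\bigl(\hat F(w,v) - \hat F(w,u)\bigr)$ and to show that on a suitable \textit{w.e.p.} event every summand is nonnegative, with at least one strictly positive. The key move is to invoke Lemma~\ref{lemma:almost-correct-order} at a \emph{finer} scale: applying it with $\epsilon/4$ in place of $\epsilon$, we obtain that \textit{w.e.p.} $\hat F(a,b) = F(a,b) = 1 - 2\cdot\mathbf 1_{a>b}$ for every pair $a,b \in S$ with $|a-b| > (\epsilon/4)\Delta_N$. Fixing $u,v \in S$ with $v - u > \epsilon\Delta_N$ and working on this event, partition $S$ into the five consecutive blocks cut out by the thresholds $u \pm (\epsilon/4)\Delta_N$ and $v \pm (\epsilon/4)\Delta_N$ (these are genuinely consecutive since $v - u > \epsilon\Delta_N > (\epsilon/2)\Delta_N$, so the middle block is nondegenerate and the two boundary blocks are disjoint). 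If $w < u - (\epsilon/4)\Delta_N$ or $w > v + (\epsilon/4)\Delta_N$, then $w$ lies on the same side of both $u$ and $v$ and is far from both, so $\hat F(w,v) = \hat F(w,u)$ and the term vanishes. If $u + (\epsilon/4)\Delta_N < w < v - (\epsilon/4)\Delta_N$, then $w$ is strictly between $u$ and $v$ and far from both, so $\hat F(w,v) = 1$, $\hat F(w,u) = -1$, and the term equals $2$. If $|w-u| \le (\epsilon/4)\Delta_N$, then $w < v$ and $v - w > (3\epsilon/4)\Delta_N$, so $\hat F(w,v) = 1$ while $\hat F(w,u) \le 1$, giving a term $\ge 0$; symmetrically, $|w-v| \le (\epsilon/4)\Delta_N$ forces $w > u$ and $w - u > (3\epsilon/4)\Delta_N$, so $\hat F(w,u) = -1$ and $\hat F(w,v) \ge -1$, again $\ge 0$. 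Summing, $\hat f(v) - \hat f(u) \ge 2\,\bigl|\{w \in S : u + (\epsilon/4)\Delta_N < w < v - (\epsilon/4)\Delta_N\}\bigr|$ on this event.

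It then remains to check, \textit{w.e.p.} and uniformly over all admissible pairs $(u,v)$, that the middle block is nonempty. The interval $(u + (\epsilon/4)\Delta_N,\, v - (\epsilon/4)\Delta_N)$ has length at least $(\epsilon/2)\Delta_N$, hence contains $\Theta(\Delta_N N) = \Theta(\sqrt{N\log^\gamma N})$ vertices of $V$; since $S$ is a uniformly random subset of $V$ of size $\Theta(N)$, the probability that $S$ avoids a fixed set of $k$ vertices is at most $(1-k/N)^{|S|} \le \exp(-c\sqrt{N\log^\gamma N})$, and a union bound over the at most $N^2$ pairs $(u,v)$ shows that every such interval meets $S$ \textit{w.e.p.} — the failure probability decays faster than $\varepsilon_\gamma^\delta(N)$ for every $\delta>0$. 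Intersecting this event with the $(\epsilon/4)$-version of Lemma~\ref{lemma:almost-correct-order}, which is again an intersection of two \textit{w.e.p.} events and hence \textit{w.e.p.}, yields $\hat f(v) > \hat f(u)$ simultaneously for all $(u,v) \in S^2$ with $v > u + \epsilon\Delta_N$, as required.

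The only delicate point is the choice of scale for Lemma~\ref{lemma:almost-correct-order}: at the naive scale $\epsilon$, a vertex $w$ within $\epsilon\Delta_N$ of $u$ or $v$ could contribute $-2$ to the sum, and there may be $\Theta(\Delta_N N)$ such vertices, which would overwhelm the $\Theta(\Delta_N N)$ positive contributions from the middle block; running the lemma at scale $\epsilon/4$ is exactly what forces those borderline vertices to contribute $\ge 0$ instead. Everything else — the block partition and the hypergeometric tail estimate for nonemptiness of the middle block — is routine bookkeeping.
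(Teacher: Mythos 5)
Your proof is correct, and while it shares the paper's skeleton—expand $\hat f(v)-\hat f(u)=\sum_{w\in S}(\hat F(w,v)-\hat F(w,u))$ and invoke Lemma~\ref{lemma:almost-correct-order} at a finer scale so that $\hat F=F$ for vertices far from both $u$ and $v$—your endgame is genuinely different from the paper's. The paper treats every vertex within the small window around $u$ or $v$ as adversarial (each such term bounded below only by $-2$), and must therefore show that the number of $S$-points strictly between $u$ and $v$ beats the number of $S$-points near the endpoints; it does this by writing the two counts as hypergeometric random variables, computing the expected gap (of order $N\Delta_N$, using scale $\epsilon/8$), and applying Hoeffding/Serfling concentration for sampling without replacement. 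You instead observe that a vertex within $(\epsilon/4)\Delta_N$ of $u$ is still at distance more than $(3\epsilon/4)\Delta_N$ from $v$ (and symmetrically), so $\hat F$ is reliable in the coordinate that matters and each boundary term is already $\ge 0$; positivity then only requires $S$ to meet the middle interval, which you get from the elementary avoidance bound $\binom{N-k}{|S|}/\binom{N}{|S|}\le(1-k/N)^{|S|}\le\exp(-c\sqrt{N\log^{\gamma}N})$ plus a union bound over at most $N^{2}$ pairs—negligible compared with $\varepsilon_{\gamma}^{\delta}(N)$. What each buys: the paper's route needs nothing beyond the crude $\pm1$ bound off the good event but pays with a concentration step and more delicate bookkeeping of constants; your route eliminates that concentration argument entirely, is robust to how $S$ is sampled (any scheme hitting every interval of length $\sim\Delta_N$ \textit{w.e.p.} would do), and in fact yields the stronger quantitative bound $\hat f(v)-\hat f(u)\ge 2\,|S\cap(u+\tfrac{\epsilon}{4}\Delta_N,\,v-\tfrac{\epsilon}{4}\Delta_N)|$, of order $N\Delta_N$, rather than mere positivity. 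Your handling of the diagonal terms $w\in\{u,v\}$ is also fine, since antisymmetry forces $\hat F(w,w)=0$ and your cases only use $\hat F\le 1$ or $\hat F\ge-1$ there.
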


 \begin{proof}[Proof Lemma~\ref{lemma:almost-correct-ordering-function}]
Fix $(u, v) \in S^2$ with $v >u + \epsilon\Delta_N$.  Define the set 
\[
\mathcal{A} = \left\{ r \in V \, : \,  \min(|u-r|, |v-r|) > \frac{\epsilon}{8}\Delta_N \right\}.
\] 
By  Lemma~\ref{lemma:almost-correct-order}, for all $r \in \mathcal{A}$
\begin{equation*}
    \hat{F}(r, v) = F(r, v) \, \text{ and } \, \hat{F}(r, u) = F(r, u) \hspace{0.5cm} \textit{w.e.p.}
\end{equation*}
Therefore we have \textit{w.e.p.}
\begin{align*}
    \hat{f}(v) - \hat{f}(u)
    &= \sum_{r \in V} \mathbf{1}_{r \in S} \left(\hat{F}(r, v) - \hat{F}(r, u)\right) \\
    &= \sum_{r \in \mathcal{A}} \mathbf{1}_{r \in S}\left(F(r, v) - F(r, u)\right)
    +
    \sum_{r \in\mathcal{A}^c} \mathbf{1}_{r \in S}\left(\hat{F}(r, v) - \hat{F}(r, u)\right) \\ 
    &\geq 2\Card{ \left\{ r \in S \, : \, u < r < v \right\} }
    -2\Card{S \cap \mathcal{A}^c} \\ 
    & = 2(A - B),
\end{align*}
where
\begin{equation*}
    A = \sum_{k \in E_1}  \mathbf{1}_{v_k \in S}, 
    \, \, 
    B = \sum_{k \in E_2}\mathbf{1}_{v_k \in S},
\end{equation*}
with $E_1 = \left\{ k \in [N] \, : \, Nu < k < Nv \right\}$ and $E_2 = \left\{ k \in [N] \, : \, \min(|k-Nu|, |k-Nv|) \leq \left \lfloor \frac{\epsilon}{8}N \Delta_N \right \rfloor \right\}$. Note that,  $A$ and $B$ appearing in this bound are sums of Bernoulli random variables sampled without replacement. That is $A, B$ are hypergeometric random variables, with $A \sim \mathcal{HG}(N; Nv-Nu-1; s)$ and $B \sim \mathcal{HG}(N;4\left \lfloor \frac{\epsilon}{8}N \Delta_N \right \rfloor + 2;s)$. We have
\begin{equation*}
    \mathbb{E}[A-B] = \frac{s}{N}\left(N(v-u)-4\left \lfloor \frac{\epsilon}{8}N \Delta_N \right \rfloor -3\right) > \frac{\epsilon}{10} N \Delta_{N} \hspace{0.5cm} \textit{w.e.p.}
\end{equation*}
Thus, by the usual analogue to Hoeffding's inequality for hypergeometric random variables \cite{serfling1974probability, greene2017exponential}, we have 

\begin{equation} \label{IneqGammaBound}
     \hat{f}(v) - \hat{f}(u) > 0 \hspace{0.5cm} \textit{w.e.p.}
\end{equation}
 \end{proof}

We are now ready to prove Theorem~\ref{thm:error-bound-alg2}.
\begin{proof}[Proof of Theorem~\ref{thm:error-bound-alg2}]
Let $s_{(k)}$'s be the order statistics of the set $S$. Fix $1 \leq k \leq s$, recall that 
\[
\hat{\sigma}_{S}(s_{(k)}) = \Card{\left\{ p \in [s] \, : \, \hat{f}(s_{(p)}) \leq \hat{f}(s_{(k)}) \right\}} \, \text{ and } \, \sigma_S(s_{(k)}) = k.
\]

For any integer $p \leq k - \frac{\epsilon}{2} N\Delta_N$ we have $s_{(p)} \leq s_{(k)} - \frac{\epsilon}{2} \Delta_N$, thus by Lemma~\ref{lemma:almost-correct-ordering-function}
\begin{equation*}
    \forall p \leq \left \lfloor  k - \frac{\epsilon}{2} N\Delta_N  \right \rfloor \, : \, \hat{f}(s_{p}) < \hat{f}(s_{(k)}) \hspace{0.5cm} \textit{w.e.p.}
\end{equation*}
Therefore 
\begin{equation*}
    \hat{\sigma}_S(s_{(k)}) \geq \left \lfloor  k - \frac{\epsilon}{2} N\Delta_N  \right \rfloor \geq k - \epsilon N \Delta_N \hspace{0.5cm} \textit{w.e.p.}
\end{equation*}
Similarly for any $ p \geq k + \frac{\epsilon}{2} N\Delta_N$ we have $s_{(p)} \geq s_{(k)} + \frac{\epsilon}{2} \Delta_N$, thus by Lemma~\ref{lemma:almost-correct-ordering-function}
\begin{equation*}
    \forall p \geq \left \lceil  k + \frac{\epsilon}{2} N\Delta_N  \right \rceil   \, : \, \hat{f}(s_{p}) > \hat{f}(s_{(k)}) \hspace{0.5cm} \textit{w.e.p.}
\end{equation*}
Therefore 
\begin{equation*}
    \hat{\sigma}_S(s_{(k)}) \leq \left \lceil  k + \frac{\epsilon}{2} N\Delta_N  \right \rceil \leq k + \epsilon N \Delta_N \hspace{0.5cm} \textit{w.e.p.}
\end{equation*}
Finally,
\begin{equation*}
    \left(\forall v \in S \right) \, : \, \left|\hat{\sigma}_S(v) - \sigma_S(v)\right| \leq \epsilon N\Delta_N = \epsilon \sqrt{N\log^{\gamma}(N)} \hspace{0.5cm} \textit{w.e.p},
\end{equation*}
as required.
\end{proof}

\subsection{Proof of Theorem \ref{ThmSomePostprocMain} and analysis of Algorithm \ref{AlgPostProc}}

We now prove the main bounds on the objects appearing in Algorithm \ref{AlgPostProc}, completing the proof of Theorem \ref{ThmSomePostprocMain}.  Throughout this section, we use notation from  Algorithm \ref{AlgPostProc} and make the same assumptions as in the statement of Theorem \ref{ThmSomePostprocMain}. By Theorem~\ref{thm:error-bound-alg2},
\begin{equation} \label{IneqSummaryMainAlg}
\| \hat{\sigma}_{j} - \sigma_{S_{j}} \|_{\infty} \leq \frac{\epsilon}{12} \, \sqrt{ N \log^{\gamma}(N)}
\end{equation}

holds for all $j \in \{1,2,3\}$ \textit{w.e.p.}

We next discuss Step 3 of the algorithm. Recall the definition of \textit{aligned} and \textit{closely aligned} from Inequality \eqref{EqAlignDef}. Note that $\{ \hat{\sigma}_{j} \}_{j \in \{1,2,3\}}$ were computed in separate calls to Algorithm \ref{AlgPostProcSplit} and so they may not be aligned - that is, some $\hat{\sigma}_j$ may be aligned with $\sigma_{S_j}$ while others may be aligned more with its reverse $\mathrm{rev}(\sigma_{S_j})$.   When $N$ is large enough all the $\sigma_{S_j}$'s are closely aligned with $id_N$. Moreover, when Inequality \eqref{IneqSummaryMainAlg} holds and $N$ sufficiently large, $\hat{\sigma}_{j}$ will either be closely aligned with $\sigma_{S_j}$ or with $\mathrm{rev}(\sigma_{S_j})$. That is, $\hat{\sigma}_{j}$ will be be closely aligned with $id_N$ or with its reverse for all $j \in \{1,2,3\}$.  For the same reason, for $N$ sufficiently large $\hat{\sigma}_{\mathrm{ref}}$ will either be closely aligned with $id_{N}$ or with $\mathrm{rev}(id_{N})$. We assume without loss of generality that $\hat{\sigma}_{\mathrm{ref}}$ is closely aligned with $id_{N}$. Finally, note that \textit{w.e.p.}, $|S_{j} \cap S'_{2}| \geq 2$ for all $j \in \{1,2,3\}$. When all of these events occur, for each $j \in \{1,2,3\}$, one of the following two cases must occur:

\begin{enumerate}
    \item \textbf{$\hat{\sigma}_{j}$ is closely aligned with $id_{N}$.} Since $|S_{j} \cap S'_{2}| \geq 2$ and $\hat{\sigma}_{j}$, $\hat{\sigma}_{\mathrm{ref}}$ are both closely aligned to $id_{N}$, they must also be closely aligned to each other. Thus, in this case, $\hat{\sigma}_{j}$ \textit{is not} reversed in step 3 of the algorithm and \textit{remains} closely aligned to $id_{N}$ in step 4.
    \item \textbf{$\hat{\sigma}_{j}$ is closely aligned with $\mathrm{rev}(id_{N})$.} Since $|S_{j} \cap S'_{2}| \geq 2$ and $\mathrm{rev}(\hat{\sigma}_{j})$, $\hat{\sigma}_{\mathrm{ref}}$ are both closely aligned to $id_{N}$, they must also be aligned aligned to each other. Thus, in this case, $\hat{\sigma}_{j}$ \textit{is}  reversed in step 3 of the algorithm and \textit{becomes} closely aligned to $id_{N}$ in step 4.
\end{enumerate}

Thus, in both cases, by step 4 the ordering $\hat{\sigma}_{j}$ is closely aligned to $id_{N}$ for all $j \in \{1,2,3\}$ \textit{w.e.p.}

Denote by $s_1^{(k)}, s_2^{(k)}$ and $s_3^{(k)}$ the order statistics of the sets $S_1, S_2$ and $S_3$ respectively. Then by Equation~\eqref{eq:merged-permutation} we have
\begin{align*}
    \norm{\hat{\sigma} - id_N}_{\infty}
    & =  \norm{\hat{\sigma}^{-1} - id_N}_{\infty} \\
    & = N \max_{j \in \{1,2,3\}} \max_{k \in \{1, \ldots, n+1\}} \left|\hat{\sigma}_{j}^{-1}(k)-v_{3k+j-3}\right| \\ 
    & \leq N \max_{j \in \{1,2,3\}} \max_{k \in \{1, \ldots, n+1\}} \left|\hat{\sigma}_{j}^{-1}(k)-v_{3k}\right| + 2 \\
    & =  \max_{j \in \{1,2,3\}} \max_{k \in \{1, \ldots, n+1\}} \left|3\hat{\sigma}_j(s_j^{(k)}) - N s_j^{(k)}\right| + 2 \\
    & \leq 3 \max_{j \in \{1,2,3\}} \norm{\hat{\sigma}_{j} - \sigma_{S_j}}_{\infty} 
    + N \max_{j \in \{1,2,3\}} \max_{k \in \{1, \ldots, n+1\}} \left|s_j^{(k)} - v_{3k}\right| + 2.
\end{align*}

By Lemma~\ref{lemma:order-stat-bound}, 
\begin{equation}\label{eq:order-stat-ineq}
    \left|s_j^{(k)} - v_{3k}\right| \leq \frac{\epsilon}{4}\Delta_N \hspace{0.5cm} \textit{w.e.p},
\end{equation}
for all $j \in \{1,2,3\}$ and $k \in \{1,\ldots, n+1\}$ such that $3k+j \leq N$. When $N$ is large enough, and when inequalities~\eqref{IneqSummaryMainAlg} and \eqref{eq:order-stat-ineq} hold, we get
\begin{equation*}
    \norm{\hat{\sigma} - id_N}_{\infty} \leq \epsilon\sqrt{N\log^{\gamma}(N)} \hspace{0.5cm} \textit{w.e.p.}
\end{equation*}
which completes the proof.

\begin{remark}[Learning $\alpha$ and $\beta$] \label{RemFindingAlphaBeta}

Algorithm \ref{AlgPostProcSplit} takes parameters $\alpha, \beta$ as given. However, our results only apply if these parameters satisfy Assumption \ref{AssumptionPsiOK}. In Theorem \ref{ThmSimpleVersion}, we state values of $(\alpha,\beta)$ for which the graphons satisfying Assumption \ref{Assumption:weakfamily} will also satisfy  Assumption \ref{AssumptionPsiOK}, and so there are no difficulties in that setting. 

However, for other graphons, it may not be obvious how to choose $\alpha, \beta$. We note here that, when graphons satisfy a slightly strengthened version of the assumption for some pair $\alpha,\beta$, it is not difficult to learn this pair quickly. We now give details. 

Note that the functions $\Psi_{R} = \Psi_{R}^{(\alpha)}, \Psi_{L} = \Psi_{L}^{(\alpha)}$ depend on the parameter $\alpha$ (though this dependence is suppressed in our notation). Recall that $\Psi_{R}, \Psi_{L}$ are continuous functions. If $w$ satisfies  Inequality    \eqref{IneqMDI} for some $\alpha < \beta$, then there exists some $\delta > 0$ and some other pair $\alpha', \beta'$ so that it satisfies the slightly strengthened bound
   \begin{equation}\label{IneqMDIStronger}
        \inf_{x \in [1-\alpha'-2\delta,1]} \Psi_{R}^{(\alpha')}(x) > \Psi_{R}^{(\alpha')}(1-\beta') + 2\delta, \qquad \inf_{x \in [0,\alpha' + 2\delta]} \Psi_{L}^{(\alpha')}(x) > \Psi_{L}^{(\alpha')}(\beta') + 2\delta,
    \end{equation}
By Lemma \ref{lemma:psiR-PsiR-bound} the estimates $\psi_{R}$, $\psi_{L}$ calculated in Equations \eqref{eq:psiR-estimate}, \eqref{eq:psiL-estimate} have error $\Delta_{N}$ going to 0 with $N$. Thus, for all $N$ sufficiently large, with extreme probability any pair $(\alpha',\beta')$ satisfying \eqref{IneqMDIStronger} will also satisfy
   \begin{equation}\label{IneqMDIStrongerEmpirical}
        \inf_{x \in [1-\alpha'-\delta,1]} \psi_{R}^{(\alpha')}(x) > \psi_{R}^{(\alpha')}(1-\beta') + \delta, \qquad \inf_{x \in [0,\alpha' + \delta]} \psi_{L}^{(\alpha')}(x) > \psi_{L}^{(\alpha')}(\beta') + \delta. 
    \end{equation}

Applying Lemma \ref{lemma:psiR-PsiR-bound} again, any pair satisfying the Inequality \eqref{IneqMDIStrongerEmpirical} will also satisfy Inequality \eqref{IneqMDI}.

Thus, if a graphon satisfies \eqref{IneqMDIStronger} for some $\delta > 0$ and some pair $(\alpha,\beta)$, we can find the pair $(\alpha',\beta')$ by using Inequality \eqref{IneqMDIStrongerEmpirical} as a test. Since the bound in Lemma \ref{lemma:psiR-PsiR-bound} is a bound on the \textit{maximum} error, checking \textit{many} pairs $(\alpha',\beta')$ from a single observed graph does not present any obstacles.

\end{remark}
\section{Application} \label{SecApplNice}

We show that Theorem \ref{ThmSimpleVersion} follows from Theorems \ref{ThmMainThm} and \ref{ThmSomePostprocMain}. To do so, it is enough to check that any graphon that is ``nice" in the sense of Definition \ref{Assumption:weakfamily} will also satisfy Assumptions \ref{assumption:lipschitz} through \ref{AssumptionPsiOK}.

We now check Assumptions \ref{assumption:lipschitz} through \ref{AssumptionPsiOK}  in order.

\textbf{Assumption \ref{assumption:lipschitz}:} Define $R_{\max} = \sup_{r \in (0,1]} R'(r) < \infty$. We calculate
\begin{align*}
    \abs{w(x,y)-w(x',y')} &= \abs{R(|x-y|) - R(|x'-y'|)} \\
    & \leq R_{\max} \abs{ \, |x-y| - |x'-y'| \, } \\
    &\leq R_{\max} \abs{(x-y) - (x'-y')} \\
    &\leq R_{\max} ( |x-y| + |x'-y'| ).
\end{align*}

\textbf{Assumption \ref{assumption:PartialDerivativeGraphon}:} We have $\frac{\partial w(x,y)}{\partial x} = R'(y)$ for $y \neq x$, so this follows immediately from Definition \ref{Assumption:weakfamily}.

\textbf{Assumption \ref{assumption:NonZerod'}:} We calculate 
\begin{align*}
    d'(x) &= \frac{d}{dx} \int_{0}^{1}R(|x-y|) dy \\
    &= \frac{d}{dx} \left( \int_{0}^{x} R(y)dy + \int_{0}^{1-x} R(y)dy  \right) \\ 
    &= R(x) - R(1-x).
\end{align*}
Thus $d'(x)$ exists for all $x \in [0,1]$. Since $R$ is monotone, $d'(x) > 0$ for $x \in [0,0.5)$ and $d'(x) < 0$ for $x \in (0.5,0]$, with $d'(x) = 0$ only for $x = 0.5$.

\textbf{Assumption \ref{assumption:InfimumInequality}:} This is given in \cite{MOPinelis}, with the function $h$ in that reference replacing the function $R$ in this paper.

\textbf{Assumption \ref{AssumptionPsiOK}:} To prove Inequality \eqref{IneqMDI}, we have
\begin{align*}
    \inf_{x \in [0,0.05]} \Psi_{L}(x) &= \inf_{x \in [0,0.05]} \int_{0}^{0.05} w(x,y) dy \\
    &= \inf_{x \in [0,0.05]} \int_{0}^{0.05} R(|x-y|) dy.
\end{align*}
Since $R$ is monotone, this infimum is achieved at $x=0$ and $x=0.05$. Continuing the calculation,  
\begin{align*}
    \inf_{x \in [0,0.05]} \Psi_{L}(x) &= \int_{0}^{0.05} R(y) dy \\
    &> \int_{0.26}^{0.31} R(y) dy \\ 
    &= \Psi_{L}(0.31),
\end{align*}

where the strict inequality in the second line comes from the assumption that $R'$ is continuous and strictly positive. This proves one of the four inequalities in the first part Assumption \ref{AssumptionPsiOK}; the remainder are essentially the same.

To prove Inequality \eqref{ineq:weaker-assumption},  recall that $\frac{\partial w(x,y)}{\partial x} = R'(y) > 0$. Thus, this follows immediately from Gronwall's inequality.

\section{Discussion} \label{SecOptDisc}

In this paper, we have written down a simple seriation algorithm and shown that, for some fairly large class of model graphons, the result of the algorithm is an estimated permutation $\hat{\sigma}$ with sup-norm error of roughly $O(\sqrt{N})$. It is natural to ask whether either the rates of convergence or our assumptions about the generating graphon  are optimal.

The second question seems to have an easy answer: we are quite sure that our conditions are not optimal, even for the type of analysis we use. In particular, our main assumptions on $w$ imply that all vertices have a fairly non-negligible chance of being connected (more precisely, our main assumptions imply  that $ w(x,y) > 0$ for all $x,y \in [0,1]$). We have already seen in Remark \ref{RemToyGraph} that this property is not satisfied for many natural graphons, and also that it can be substantially weakened. 

We hypothesize that this global connection property can be replaced by a much weaker assumption, saying roughly that some \textit{power} of the adjacency matrix should be globally connected. The main difficulties in carrying this out are different for our two main results:
\begin{itemize}
    \item The main difficulty in extending Theorem \ref{ThmMainThm} is carrying through the arguments in Section \ref{SecFiedlerProp}, which show that the Fiedler eigenspace has dimension 1 and that the Fiedler function is monotone. 
    
    As discussed in Remark \ref{RemToyGraph}, this can sometimes be checked directly for particularly simple graphons. We also expect that certain generic techniques, such as perturbation analysis, can be used to extend this to small nonparametric families of graphons. However, we do not know how to get rid of this assumption in any great generality.
    \item The main difficulty in extending Theorem \ref{ThmSomePostprocMain} is not merely technical: Algorithm \ref{AlgPostProc} will not generally give a sensible answer without the global connectedness property. This occurs because the functions $\psi_{R}(i),\psi_{L}(i)$ may both be 0 for many $i \in [N]$. In \cite{janssen2019reconstruction}, we introduce a ``local" quantity $R(i,j)$ that is analogous to the ``global" quantity $R$ used in the present paper. We suspect that a similar approach would work in the present context.
\end{itemize}

We don't know the optimal rate of convergence for this problem in very many metrics, but we give a few related results largely drawn from our companion paper \cite{janssen2019reconstruction}. Note that this paper studies a slightly different sampling scheme, but very similar arguments apply:

\begin{enumerate}
    \item Under slightly different model conditions, the iterated post-processing algorithm of \cite{janssen2019reconstruction} can obtain error of $O(N^{\epsilon})$ for any fixed $\epsilon > 0$, so the rate $N^{0.5}$ is certainly not optimal for all classes of graphon models.
    \item On the other hand, we might ask to estimate the latent positions $v_{1},\ldots,v_{N}$ of vertices in addition to the latent permutation $\sigma$. The argument in Section 5 of \cite{janssen2019reconstruction} says that any such estimate must suffer an error of at least $\Omega(N^{-0.5})$ for any sufficiently large class of graphon models. Since the natural estimate
    \[ 
    \hat{v}_{i} = \frac{\hat{\sigma}(i)}{N}
    \]
    is so similar to $\hat{\sigma}$ itself, this suggests that one can't do much better in general.
\end{enumerate}

Several other papers have recently studied the optimal rate of convergence for ``the" noisy seriation problem, including \cite{mao2018breaking} and \cite{flammarion2019optimal}. Their models and versions of the noisy seriation problem are different enough from ours that it is not possible to directly compare the results. We leave the question of comparing these notions of noisy seriation to future work.

\subsection*{Acknowledgements}

We thank Iosif Pinelis for his detailed response \cite{MOPinelis} on MathOverflow. The response was quite a bit of work, given without any request for reward, and let us substantially simplify the assumptions appearing in Section \ref{SecNotation}.
\medskip
\bibliographystyle{imsart-number}
\bibliography{references}
\nocite{*}
\newpage
\appendix

\section{Appendix}
\begin{lemma}\label{lemma:order-stat-bound}
Let $V = \{v_1, v_2, \ldots, v_N\}$ with $v_i = i/N$ for all $i$. Let $S$ be a subset of $V$ chosen uniformly at random and with deterministic size $m = |S|$ such that $\left \lfloor \tfrac{N}{3} \right \rfloor \leq m \leq \left \lfloor \tfrac{N}{3} \right \rfloor + 1$. Define $s_{(1)} < s_{(2)} < \ldots < s_{(m)}$ to be the order statics of the elements of $S$. Then for each $1 \leq k \leq m$ and for any $c > 0$, there exists a constant $\delta > 0$ and an integer $N_0$ such that
\begin{equation*}
    \Prob{\left| s_{(k)} - \frac{k}{m} \right| > c \Delta_N} \leq \varepsilon_{\gamma}^{\delta}(N)
\end{equation*}
for all $N \geq N_0$. In other words for each $1 \leq k \leq m$ and $c > 0$ the event $\left| s_{(k)} - \frac{k}{m} \right| \leq c\Delta_N$ holds \textit{w.e.p}.
\end{lemma}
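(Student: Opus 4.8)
The plan is to express each order statistic through a hypergeometric count and then invoke a standard tail bound for sampling without replacement. Fix $1 \le k \le m$ and $c > 0$. For $0 \le j \le N$ let $X_j := \Card{S \cap \{v_1,\ldots,v_j\}}$ be the number of sampled points among the $j$ left-most vertices; since $S$ is a uniformly random subset of $V$ of size $m$, we have $X_j \sim \mathcal{HG}(N;j;m)$ with $\mathbb{E}[X_j] = jm/N$. The basic identity is $\{s_{(k)} \le v_j\} = \{X_j \ge k\}$: the $k$-th smallest sampled point lies at or below $v_j$ exactly when at least $k$ sampled points lie in $\{v_1,\ldots,v_j\}$. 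Equivalently, $\{s_{(k)} > v_j\} = \{X_j \le k-1\}$ and $\{s_{(k)} < v_j\} = \{X_{j-1} \ge k\}$.

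First I would choose the cutoffs $j_+ := \min\{N,\ \lfloor (k/m + c\Delta_N)N\rfloor\}$ and $j_- := \max\{0,\ \lceil (k/m - c\Delta_N)N\rceil\}$, so that $v_{j_+} \le k/m + c\Delta_N$ and $v_{j_-} \ge k/m - c\Delta_N$ whenever these numbers lie in $[0,1]$. This gives the inclusion
\[
\Big\{\, \big| s_{(k)} - \tfrac{k}{m} \big| > c\Delta_N \,\Big\} \ \subseteq \ \{\, X_{j_+} \le k-1 \,\} \ \cup \ \{\, X_{j_- - 1} \ge k \,\}.
\]
Using $N/3 \le m \le N/3+1$ (so $m/N \to 1/3$), a short computation that absorbs the $O(1)$ rounding errors into the leading term — legitimate because $m\Delta_N = \Theta(\sqrt{N\log^{\gamma}(N)}) \to \infty$ — yields $\mathbb{E}[X_{j_+}] \ge k + \tfrac{c}{2} m\Delta_N$ and $\mathbb{E}[X_{j_- - 1}] \le k - \tfrac{c}{2} m\Delta_N$ for all large $N$. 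Hence each event on the right-hand side above is a deviation of a hypergeometric variable from its mean by at least $\tfrac{c}{2} m\Delta_N$.

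Next I would apply the Hoeffding/Serfling inequality for sampling without replacement (as in \cite{serfling1974probability, greene2017exponential}), which bounds each such tail by $\exp\!\big( -2(\tfrac{c}{2} m\Delta_N)^2/m \big) = \exp\!\big( -\tfrac{c^2}{2}\, m\Delta_N^2 \big)$. Since $m\Delta_N^2 = (m/N)\log^{\gamma}(N) \ge \tfrac14 \log^{\gamma}(N)$ for $N$ large, a union bound over the two events gives $\Prob{\, | s_{(k)} - k/m | > c\Delta_N \,} \le 2\exp\!\big( -\tfrac{c^2}{8}\log^{\gamma}(N) \big) \le \varepsilon_{\gamma}^{\delta}(N)$ for $N$ sufficiently large, with, say, $\delta = c^2/16$. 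This is exactly the claim that $| s_{(k)} - k/m | \le c\Delta_N$ holds \textit{w.e.p.}

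The argument is essentially routine; the only delicate points — and the \textbf{main (minor) obstacle} — are (i) the edge cases in which $k/m \pm c\Delta_N$ leaves $[0,1]$, i.e. when $k$ is within $O(m\Delta_N)$ of $1$ or of $m$, where one of the two tail events is empty or the clamping renders its bound trivial, so the analysis only simplifies; and (ii) tracking the precise constant in the concentration inequality one invokes — though any version with an exponent of order $(\text{deviation})^2/(\text{sample size})$ suffices, since all that is needed is that this exponent is $\Theta(m\Delta_N^2) = \Theta(\log^{\gamma}(N))$.
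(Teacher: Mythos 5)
Your proof is correct, but it takes a genuinely different route from the paper's. The paper works directly with the order statistic $s_{(k)}$: it writes down its exact distribution (observing $N s_{(k)} - k \sim \mathcal{NHG}(N;N-m;k)$), bounds the bias $|\mathbb{E}[s_{(k)}] - k/m| \le 2/N$, and then applies the Serfling/Hoeffding concentration inequality for sampling without replacement to the deviation $|s_{(k)} - \mathbb{E}[s_{(k)}]|$ itself, absorbing the bias into half of the allowed deviation $c\Delta_N$. You instead pass through the classical duality $\{s_{(k)} \le v_j\} = \{X_j \ge k\}$, reducing the two-sided tail of the order statistic to two one-sided tails of the hypergeometric counts $X_{j_\pm}$, to which the without-replacement Hoeffding bound applies in its most standard form (the $X_j$ are literally sums of indicators over a without-replacement sample), and then take a union bound. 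What your route buys is that the concentration step is applied to an unambiguously covered object — a partial count under sampling without replacement — whereas the paper's direct application of Serfling's bound to the negative-hypergeometric-type variable $s_{(k)}$ requires the reader to accept that the inequality transfers to that setting; what the paper's route buys is brevity, a single tail event instead of two, and no cutoff/rounding bookkeeping at $j_\pm$ or edge-case discussion near $k \approx 1$ or $k \approx m$, which you correctly flag and dispose of. Both arguments land on the same exponent of order $\Theta(N\Delta_N^2) = \Theta(\log^{\gamma}(N))$ and hence the same conclusion with $\delta$ proportional to $c^2$.
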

\begin{proof}[Proof of Lemma~\ref{lemma:order-stat-bound}]
Let $1 \leq k \leq N$ and $ k \leq j \leq N - m + k$. Then $s_{(k)} = v_j$ if and only if there is exactly $k-1$ elements from $S$ less than $v_j$ and exactly $m-k$ elements from $S$ bigger than $v_j$. Therefore
\begin{equation*}
    \Prob{s_{(k)} = v_j} = \frac{\binom{j-1}{k-1}\binom{N-j}{m-k}}{\binom{N}{m}}.
\end{equation*}
Note that the distribution of $s_{(k)}$ looks very similar to a negative hypergeometric distribution. In fact one can easily check that $N s_{(k)} - k \sim \mathcal{NHG}(N; N-m; k)$. 

The paper \cite{serfling1974probability} provides concentration inequalities for sampling without replacement. According to \cite{serfling1974probability, greene2017exponential}, for any $\lambda > 0$ we have
\[
\Prob{|s_{(k)} - \mathbb{E}[s_{(k)}]| > \lambda} \leq 2 \exp \left( \frac{-2 N^2 \lambda^2}{m(1-f_m^*)}\right)
\]
with $f_m^* = (m-1)/N$. We can check that $1/4 < f_m^* \leq 1/3$ for all $N$ sufficiently large. Thus,
\[
\Prob{|s_{(k)} - \mathbb{E}[s_{(k)}]| > \lambda} \leq 2 \exp \left( \frac{-8 N^2 \lambda^2}{3m}\right).
\]
Let $c > 0$, we have
\begin{align*}
    |s_{(k)} - \frac{k}{m}| > c \Delta_N 
    & \Rightarrow |s_{(k)} - \mathbb{E}[s_{(k)}]| > c\Delta_N - |\frac{k}{m}-\mathbb{E}[s_{(k)}]| \\
    & \Rightarrow |s_{(k)} - \mathbb{E}[s_{(k)}]| > c\Delta_N - \frac{(N-m)k}{Nm(m+1)} \\
    & \Rightarrow |s_{(k)} - \mathbb{E}[s_{(k)}]| > c\Delta_N - \frac{2}{N}.
\end{align*}
For $N$ large enough so that $4 < c N \Delta_N$,
\begin{equation*}
    |s_{(k)} - \frac{k}{m}| > c \Delta_N \Rightarrow |s_{(k)} - \mathbb{E}[s_{(k)}]| > \frac{c}{2}\Delta_N.
\end{equation*}
Therefore 
\begin{align*}
    \Prob{ |s_{(k)} - \frac{k}{m}| > c \Delta_N } 
    & \leq \Prob{|s_{(k)} - \mathbb{E}[s_{(k)}]| > \frac{c}{2}\Delta_N} \\ 
    & \leq 2 \exp \left( \frac{-2 c^2 N^2 \Delta_N^2}{3m}\right) \\
    & \leq 2 \exp \left(- c^2 N \Delta_N^2\right) \\ 
    & \leq \varepsilon_{\gamma}^{\delta}(N)
\end{align*}
for some $0 < \delta < c^2$.
\end{proof}

\end{document}